\documentclass[letterpaper, 11pt]{amsart}
\pdfoutput=1
\usepackage{graphicx} 

\usepackage[french,english]{babel}
\usepackage{graphicx}
\usepackage{subcaption}
\usepackage{tabularx}
\usepackage{booktabs}
\usepackage{array}
\usepackage{amsmath}
\usepackage{amsfonts}
\usepackage{amssymb}
\usepackage{amsthm}
\usepackage{bbm}
\usepackage{float}
\usepackage[scr]{rsfso}
\usepackage{enumitem}
\usepackage{permute}
\usepackage{slashed}
\usepackage[usenames,dvipsnames]{xcolor}
\usepackage[pagebackref = false, colorlinks, linkcolor = Red, citecolor = Green, bookmarksdepth=2, linktocpage=true]{hyperref}
\usepackage[capitalize]{cleveref}
\usepackage{verbatim}

\crefformat{footnote}{#2\footnotemark[#1]#3}

\usepackage[T1]{fontenc}

\usepackage{tikz}
\usetikzlibrary{calc}
\usetikzlibrary {arrows.meta}

\newcommand{\op}{\operatorname}

\newcommand{\be}{\begin{equation}}
	\newcommand{\ee}{\end{equation}}
\newcommand{\Ga}{\Gamma}

\newcommand{\R}{\mathbb R}
\renewcommand{\H}{\mathbb H}

\newcommand{\N}{\mathbb N}
\newcommand{\ga}{\gamma}

\newcommand{\La}{\Lambda}
\newcommand{\inte}{\op{int}}
\newcommand{\ba}{\backslash}

\newcommand{\cal}{\mathcal}
\newcommand{\br}{\mathbb R}
\newcommand{\SO}{\op{SO}}

\newcommand{\PSL}{\op{PSL}}
\newcommand{\F}{\cal F}

\newcommand{\vol}{\op{Vol}}

\newcommand{\G}{\Gamma}
\newcommand{\m}{\mathsf{m}}

\newcommand{\T}{\op{T}}
\renewcommand{\frak}{\mathfrak}
\renewcommand{\v}{\mathsf v}
\newcommand{\e}{\varepsilon}

\newcommand{\BMS}{\mathsf{m}_\v^{\op{BMS}}}
\renewcommand{\L}{\cal L}
\newcommand{\fa}{\mathfrak a}

\renewcommand{\i}{\op{i}}

\renewcommand{\S}{\mathbb S}

\newcommand{\PGL}{\op{PGL}}

\renewcommand{\P}{\mathbb{P}}

\renewcommand{\fg}{\frak g}

\newcommand{\rank}{r}

\newcommand{\supp}{\op{supp}}

\newcommand{\bp}{\bs{\varphi}}
\newcommand{\sQ}{\sf{Q}}
\newcommand{\w}{\sf{w}}

\DeclareMathOperator{\interior}{int}

\DeclareMathOperator{\Hom}{Hom}

\DeclareMathOperator{\Ad}{Ad}

\newcommand{\LieG}{\mathfrak{g}}
\newcommand{\LieA}{\mathfrak{a}}

\newcommand{\LieK}{\mathfrak{k}}

\newcommand{\LieP}{\mathfrak{p}}

\newcommand{\Fboundary}{\cal{F}}

\newcommand{\involution}{\mathsf{i}}
\newcommand{\growthindicator}{\psi_\Gamma}

\newcommand{\limitcone}{\cal{L}}
\renewcommand{\u}{\mathsf{u}}

\newcommand{\BR}{\mathsf{m}^{\mathrm{BR}}_\v}
\newcommand{\BRstar}{\mathsf{m}^{\mathrm{BR}_*}_\v}
\renewcommand{\T}{\mathbb T}
\newcommand{\rankG}{\mathrm{rank}(G)}
\newcommand{\primGamma}{\Gamma_{\mathrm{prim}}}

\newcommand{\bs}{\boldsymbol}
\renewcommand{\sf}{\mathsf}
\newcommand{\bv}{\bs{\varphi}}
\newtheorem{theorem}{Theorem}[section]

\newtheorem*{claim*}{Claim}

\newtheorem{lemma}[theorem]{Lemma}

\newtheorem{proposition}[theorem]{Proposition}


\theoremstyle{definition}
\newtheorem{definition}[theorem]{Definition}

\theoremstyle{remark}
\newtheorem{remark}[theorem]{Remark}

\numberwithin{equation}{section}

\title[Multiple correlations of
spectra]{Multiple correlations of 
	spectra \\ for higher rank Anosov representations}

\author{Michael Chow}
\address{Department of Mathematics, Yale University, New Haven, CT 06511}
\email{mikey.chow@yale.edu}

\author{Hee Oh}
\address{Department of Mathematics, Yale University, New Haven, CT 06511 and Korea Institute for Advanced Study, Seoul}
\email{hee.oh@yale.edu}
\thanks{
	Oh is partially supported by the NSF grant DMS-1900101.}

\begin{document}
	\begin{abstract} 
		We describe multiple correlations of  Jordan and Cartan spectra for any finite number of Anosov representations of a finitely generated group.	This extends our previous work on correlations of length and displacement spectra for rank one convex cocompact representations. Examples include correlations of the Hilbert length spectra  for convex projective structures on a closed surface as well as correlations of eigenvalue gaps and singular value gaps for Hitchin representations. We relate the correlation problem  to the counting problem for Jordan and Cartan projections of an Anosov subgroup with respect to a family of carefully chosen truncated {\it hypertubes}, rather than in tubes as in our previous work.  Hypertubes go to infinity in a linear subspace of directions, while tubes go to infinity in a single direction and this feature presents a novel difficulty in this higher rank correlation problem.
	\end{abstract}
	
	\maketitle
	
	\section{Introduction}
	\label{sec:Introduction} 	
	In this paper, we describe multiple correlations of  Jordan and Cartan spectra for any finite number of Anosov representations of a finitely generated group into semisimple real algebraic groups.
	
	We begin by discussing special cases of our result for different classes of geometric structures on a closed surface. Let $S$ be a closed orientable surface of genus $g \ge 2$.
	Let $\rho$ be a discrete faithful representation of the fundamental group $\Sigma=\pi_1(S)$  into $\PSL_2\R$, which can be viewed as an element of the Teichm\"{u}ller space $\cal{T}(S)\simeq \br^{6g-6}$ after identifying conjugate representations. Hence $\rho$ induces a hyperbolic structure $S_\rho \simeq \rho(\Sigma) \backslash\H^2$ and a length function $\ell_\rho$ which assigns to the conjugacy class $[\sigma] \in [\Sigma]$ the hyperbolic length of the corresponding closed geodesic in $S_\rho$. The prime geodesic theorem due to Huber \cite{Hub59} gives an asymptotic for the number of closed geodesics in $S_\rho$ of hyperbolic length at most $T$:
	$$\#\{[\sigma] \in [\Sigma]: \ell_\rho(\sigma) \le T\} \sim \frac{e^T}{T} \quad \text{as } T \to \infty.$$ 
	Given a $d$-tuple $\bs{\rho}=(\rho_1,\dots,\rho_d)$ of distinct elements in $\cal{T}(S)$, it is natural to ask how the length functions $\ell_{\rho_1}, \dots, \ell_{\rho_d}$ are correlated. 
	In \cite{CO23}, we proved that for any interior vector $\bs{r}=(r_1, \dots, r_d)$ in the smallest closed cone containing all vectors $(\ell_{\rho_1}(\sigma),\dots,\ell_{\rho_d}(\sigma))$ for $\sigma \in \Sigma$, there exists $\delta=\delta(\bs{\rho}, \bs{r})
	> 0$ such that for any $\e_1,\dots,\e_d > 0$, we have as $T \to \infty$,
	$$\#\{[\sigma] \in [\Sigma]: r_iT \le \ell_{\rho_i}(\sigma) \le r_iT + \e_i \text{ for all $1\le i\le d$}\} \sim c\cdot\frac{e^{\delta T}}{T^{(d+1)/2}}$$ 
	for some $c > 0$. This was earlier proved by Schwarz-Sharp \cite{SS93} for $d=2$ and  $\bs{r}=(1,1)$ (see \cite{Glo17} for $d=2$ and a general $\bs{r}$).
	Indeed, we obtained an analogous result on correlations of length spectra for any finite number of  rank one convex cocompact representations of a finitely generated group \cite{CO23}. 
	
	\subsection*{Hilbert length spectra for convex projective structures}
	Another space of interesting geometric structures on a closed surface $S$ is the space $\cal{C}(S)$ of convex projective structures. A discrete faithful representation $\rho:\Sigma \to \PGL_3\R$ is called {\it{convex projective}} if its image acts cocompactly on some properly convex domain $\Omega \subset \R\P^2$. Such a representation $\rho$ endows a convex projective structure on $S$
	which we denote by $S_\rho\simeq \rho(\Ga)\ba \Omega$. The space $\cal{C}(S)$ can be identified with the space of convex projective representations $\rho: \Sigma \to \PGL_3\R$ modulo conjugation. 
	Goldman showed that $\cal{C}(S)$ is homeomorphic to $\R^{16g-16}$ and contains the Teichm\"uller space $\cal{T}(S)$ as a $(6g-6)$-dimensional subspace \cite{Gol90}. 
	For $\rho\in {\cal C}(S)$ and a conjugacy class $[\sigma]\in [\Sigma]$, denote by $\ell^{\op{H}}_\rho(\sigma)$ the Hilbert length of the corresponding closed geodesic (cf. \eqref{hilbert}). Benoist proved the prime geodesic theorem for $S_\rho$:
	\begin{equation}
		\label{eqn:HilbertPrime}
		\#\{[\sigma] \in [\Sigma] : \ell^{\op{H}}_\rho(\sigma) \le T\} \sim \frac{e^{\delta_\rho T}}{\delta_\rho T} \quad \text{as } T \to \infty,
	\end{equation}
	where $\delta_\rho$ is the topological entropy of the Hilbert geodesic flow on $S_\rho$ \cite{Ben04}. 
	
	Blayac and Zhu showed that \eqref{eqn:HilbertPrime} holds more generally for strongly convex cocompact projective representations \cite{BZ23}. Let $\Gamma$ be a finitely generated group. Following \cite[Definition 1.1]{DGK17}, a discrete faithful representation $\rho : \Gamma \to \PGL_3\R$ is {\textit{strongly convex cocompact}} if $\rho(\Gamma)$ acts on some {\textit{strictly}} convex domain $\Omega$, and the action is cocompact on the convex hull of its limit set. 
	For any $d$-tuple $\bs{\rho} = (\rho_1,\dots,\rho_d) : \Gamma \to \PGL_3\R$ of Zariski dense strongly convex cocompact projective representations,
	let $\limitcone_{\bs{\rho}}\subset \R^d$ denote the smallest closed cone containing all vectors $(\ell^{\op{H}}_{\rho_1}(\gamma), \dots, \ell^{\op{H}}_{\rho_d}(\gamma))$ for $\gamma \in \Gamma$. 
	If $\bs{\rho}$ is  {\textit{independent}}, i.e., for all $i \ne j$, $\rho_i$ is conjugate to neither $\rho_j$ nor the contragradient $\rho_j^*$, then $\limitcone_{\bs{\rho}}$ has non-empty interior.
	The following is a special case of our main theorem:
	
	\begin{theorem}[Multiple correlations of Hilbert length spectra]
		\label{thm:Hilbert} Let $d\ge 2$ and
		$\bs{\rho} = (\rho_1,\dots,\rho_d):\Gamma\to \PGL_3(\R)$ be an independent $d$-tuple of Zariski dense strongly convex cocompact projective representations. Then for any interior vector $\bs{r} =(r_1,\dots, r_d) \in \limitcone_{\bs{\rho}}$, there exists $\delta=\delta_{\bs{\rho}}(\bs{r}) > 0$  such that for any $\e_1,\dots, \e_d > 0$, we have as $T \to \infty$,
		$$\#\{[\sigma] \in [\Gamma] : r_iT \le \ell^{\op{H}}_{\rho_i}(\sigma) \le r_iT + \e_i \text{ for all } 1 \le i \le d\} \sim c\cdot \frac{e^{\delta T}}{T^{(d+1)/2}}$$
		for some $c>0$. 
		Moreover, we have the upper bound: $$\delta_{\bs{\rho}} (\bs{r}) \le \min_{1 \le i \le d} \delta_{\rho_i} r_i,$$
		which is strict when all $\delta_{\rho_i}r_i$ are equal. 
	\end{theorem}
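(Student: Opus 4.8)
The plan is to derive Theorem~\ref{thm:Hilbert} from our general theorem on counting Jordan projections in truncated hypertubes, by identifying the Hilbert length spectrum with a single linear functional of the Jordan projection. Recall from \cite{DGK17} that a strongly convex cocompact representation $\rho:\Gamma\to\PGL_3(\R)$ is projective Anosov and sends every infinite-order element of $\Gamma$ to a biproximal element; writing $\lambda_1(\gamma)\ge\lambda_2(\gamma)\ge\lambda_3(\gamma)$ for the moduli of the eigenvalues of $\rho(\gamma)$, the Jordan projection is $\mu(\rho(\gamma))=(\log\lambda_1(\gamma),\log\lambda_2(\gamma),\log\lambda_3(\gamma))$, which lies in the interior of the positive Weyl chamber of $\mathfrak a\cong\{x\in\R^3:\sum x_i=0\}$. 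By the definition \eqref{hilbert} of the Hilbert length,
\[
\ell^{\op H}_\rho(\gamma)=\tfrac12\bigl(\log\lambda_1(\gamma)-\log\lambda_3(\gamma)\bigr)=\varphi\bigl(\mu(\rho(\gamma))\bigr),
\]
where $\varphi(x)=\tfrac12(x_1-x_3)$; being a positive combination of the simple roots, $\varphi$ lies in the interior of the dual of the limit cone and in particular detects the exponential growth of $\rho(\Gamma)$.

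Next I reformulate the correlation count. The constraint $r_iT\le\ell^{\op H}_{\rho_i}(\sigma)\le r_iT+\e_i$ becomes $\varphi(\mu(\rho_i(\sigma)))\in[r_iT,\,r_iT+\e_i]$, so the quantity in the theorem equals
\[
\#\bigl\{[\sigma]\in[\Gamma]:\ (\mu(\rho_1(\sigma)),\dots,\mu(\rho_d(\sigma)))\in\mathcal H_T\bigr\},
\]
where $\mathcal H_T\subset\prod_{i=1}^d\mathfrak a$ is the truncated region cut out by the $d$ slab conditions $\varphi(\cdot)\in[r_iT,r_iT+\e_i]$: it is bounded in $d$ of the $2d$ ambient directions and runs off to infinity along a $d$-dimensional linear subspace, which is precisely the hypertube geometry the main theorem is built to handle. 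Independence of $\bs\rho$ guarantees, through the relevant non-conjugacy input, that the joint Jordan projection set $\{(\mu(\rho_1(\gamma)),\dots,\mu(\rho_d(\gamma))):\gamma\in\Gamma\}$ spans a limit cone with nonempty interior whose image under $(\varphi,\dots,\varphi)$ is $\limitcone_{\bs\rho}$, so that the interiority of $\bs r$ places $\mathcal H_T$ well inside this joint cone. Applying the general counting theorem then yields the asymptotic $c\cdot e^{\delta T}/T^{(d+1)/2}$ with $\delta=\delta_{\bs\rho}(\bs r)>0$.

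Finally, the upper bound $\delta_{\bs\rho}(\bs r)\le\min_i\delta_{\rho_i}r_i$ is soft: for each fixed $i$, discarding all but the $i$-th constraint bounds the count above by $\#\{[\sigma]\in[\Gamma]:r_iT\le\ell^{\op H}_{\rho_i}(\sigma)\le r_iT+\e_i\}$, which by differencing the prime geodesic theorem \eqref{eqn:HilbertPrime} for $S_{\rho_i}$ is $O_{\e_i}(e^{\delta_{\rho_i}r_iT})$; comparing exponential rates gives $\delta_{\bs\rho}(\bs r)\le\delta_{\rho_i}r_i$ for every $i$. The strictness when all $\delta_{\rho_i}r_i$ agree, say to a common value $\delta_0$, is the substantive point: in the variational description of $\delta_{\bs\rho}(\bs r)$ as a constrained pressure, the equality $\delta_{\bs\rho}(\bs r)=\delta_0$ would force the equilibrium state realizing it to coincide, after pushing forward to a common flow space, with the Bowen--Margulis--Sullivan measure of maximal entropy of the $i$-th Hilbert geodesic flow for \emph{every} $i$; by marked length spectrum rigidity for Zariski dense projective Anosov representations this would make $\rho_i$ conjugate to $\rho_j$ or to its contragredient $\rho_j^*$ for all $i\ne j$, contradicting independence. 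I expect this last step --- together with the behind-the-scenes verification that $\mathcal H_T$ satisfies the admissibility hypotheses of the general counting theorem and that the measure rigidity input holds in the needed generality --- to be the main obstacle, while the identification of $\ell^{\op H}$ with a linear functional and the domination by a single spectrum are comparatively routine.
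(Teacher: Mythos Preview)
Your reduction of the main asymptotic to \cref{thm:Correlations} via the identification $\ell^{\op H}_\rho(\sigma)=\tfrac12(\lambda_1-\lambda_3)(\lambda(\rho(\sigma)))$ is exactly the paper's route (a notational caution: the paper writes $\lambda$ for the Jordan projection and reserves $\mu$ for the Cartan projection). Two points deserve comment.

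First, you say independence ``guarantees, through the relevant non-conjugacy input,'' that the joint limit cone has nonempty interior; but what \cref{thm:Correlations} actually requires is Zariski density of $\bs\rho(\Gamma)$ in $(\PGL_3\R)^d$, and you have not said why independence gives this. The paper fills this in: the only automorphisms of $\PGL_3\R$ are inner or inner composed with the contragredient, so the independence hypothesis (no $\rho_i$ conjugate to $\rho_j$ or to $\rho_j^*$) together with Zariski density of each $\rho_i(\Gamma)$ forces Zariski density in the product (cf.\ the remark following \cref{thm:Counting}). This is exactly where the contragredient condition enters, and you should make it explicit.

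Second, your route to the strict inequality is different from the paper's and more circuitous than necessary. The paper cites \cite[Theorem~1.4]{KMO21} directly; the mechanism is purely convex-analytic. If $\psi_{\bs\rho(\Gamma)}(\v^\star)=\delta_{\rho_i}\varphi_i(\v^\star)$ for every $i$, then since each $\delta_{\rho_i}\varphi_i$ dominates $\psi_{\bs\rho(\Gamma)}$ (\cref{thm:delta}), each is tangent to the growth indicator at $\v^\star$; by uniqueness of the tangent form at an interior point of the limit cone (\cref{thm:Anosov}(2)) all the $\delta_{\rho_i}\varphi_i$ coincide as linear forms on $\bigoplus_i\fa_i$, which is impossible for $d\ge2$ because $\varphi_i$ depends only on the $i$-th factor. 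No equilibrium-state comparison or marked-length-spectrum rigidity is needed; strict concavity of $\psi_\Gamma$ together with the product structure of $\fa$ already does the work. Your rigidity argument is plausible in spirit but would require substantially more justification (why equality of the exponents forces coincidence of equilibrium states, and in what generality the rigidity input holds), all of which is avoidable here.
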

	
	\begin{remark}
		Using the thermodynamic approach of Schwartz-Sharp \cite{SS93}, Dai-Martone \cite{DM22} proved \cref{thm:Hilbert} for $d=2$ and $\bs{r}=(1/\delta_{\rho_1},1/\delta_{\rho_2})$. 
	\end{remark}

	\subsection*{Eigenvalue and singular value gaps for Hitchin representations}
	For $n\ge 2$, the Hitchin component $\cal{H}_n$ of $\Hom(\Sigma,\PSL_n\R)$ is the connected component containing the representation $\tau_n \circ \rho_0:\Sigma \to \PSL_n\R$ where $\rho_0: \Sigma \to \PSL_2\R$ is a discrete and faithful representation and $\tau_n:\PSL_2\R \to \PSL_n\R$ is the irreducible representation which is unique up to conjugation \cite{Hit92}. Let $\rho\in \cal H_n$ be a Hitchin representation. Labourie \cite{Lab06} proved that for all nontrivial $\sigma \in \Sigma$, $\rho(\sigma)$ has all distinct positive eigenvalues.
	We denote their logarithms in the decreasing order by
	\begin{equation}
		\label{1}
		\lambda(\rho(\sigma)) = (\lambda_1(\rho(\sigma)), \dots, \lambda_n(\rho(\sigma)))\in \br^n.
	\end{equation} 
	We  denote the logarithms of the singular values of $\rho(\sigma)$ in the decreasing order by
	\begin{equation}
		\label{2}
		\mu(\rho(\sigma)) = (\mu_1(\rho(\sigma)), \dots, \mu_n(\rho(\sigma)))\in \br^n.
	\end{equation}
	For $1 \le k \le n-1$, consider the linear form $\alpha_k:\R^n \to \R$ given by 
	$$\alpha_k(x_1,\dots,x_n) = x_k - x_{k+1} .$$
	Sambarino \cite{Sam14a} proved that
	for any Zariski dense $\rho \in \cal{H}_n$ and $1\le k\le n-1$:
	\begin{equation}\label{sss}
		\#\{[\sigma] \in [\Sigma] : \alpha_k(\lambda(\rho(\sigma))) \le T\} \sim \frac{e^{\delta_{k}T}}{\delta_{k}T} \quad \text{as } T \to \infty.
	\end{equation}
	where $\delta_{k} = \lim_{T \to \infty}\frac{1}{T}\log \#\{[\sigma] \in [\Sigma]: \alpha_k(\lambda(\rho(\sigma)) \le T\}.$  Moreover, Potrie-Sambarino \cite[Theorem B]{PS17} proved that $\delta_k=1$.
	
	For a $d$-tuple $\bs{\rho} = (\rho_1,\dots,\rho_d)$ of Zariski dense Hitchin representations in $\cal{H}_n$ and $\bs{\beta} = (\beta_1,\dots,\beta_d) \in \Pi^d$, where $\Pi=\{\alpha_1,\dots,\alpha_d\}$, denote by $\limitcone_{\bs{\beta}} = \limitcone_{\bs{\rho},\bs{\beta}} \subset \R^d$  the smallest closed cone containing all vectors $$(\beta_{1}(\lambda(\rho_1(\sigma))), \dots, \beta_{d}(\lambda(\rho_d(\sigma)))) \text{ for } \sigma \in \Sigma.$$  
	For example, all $\beta_i$, $1\le i\le d$, can be $\alpha_1$.
	If  $\bs{\rho}$ is {\textit{independent}}, in the sense that  for all $i \ne j$, $\rho_i$ is not conjugate to neither $\rho_j$ nor the contragradient $\rho_j^*$, then $\limitcone_{\bs{\beta}} $ has non-empty interior. We prove the following:

	\begin{theorem}[Multiple correlations of eigenvalue and singular value gaps]
		\label{thm:Hitchin} 
		Let $d\ge 2$ and $\bs{\rho} = (\rho_1,\dots,\rho_d) :\Sigma \to  \PSL_n(\R) $ be a $d$-tuple of independent Zariski dense Hitchin representations. Let $\bs{\beta} = (\beta_1,\dots,\beta_d) \in \Pi^d$. Then for any interior vector $\bs{r} =(r_1,\dots, r_d) \in \limitcone_{\bs{\beta}}$, there exists $\delta=\delta_{\bs{\rho},\bs{\beta}}(\bs{r}) > 0$
		such that for any $\e_1,\dots, \e_d > 0$, we have as $T \to \infty$,
		$$	\#\{[\sigma] \in [\Sigma] : r_iT \le \beta_i(\lambda(\rho_i(\sigma))) \le r_iT + \e_i \text{ for all } 1 \le i \le d\} \sim c \cdot \frac{e^{\delta  T}}{T^{(d+1)/2}} $$
		and
		$$\#\{\sigma \in \Sigma : r_iT \le \beta_i(\mu(\rho_i(\sigma))) \le r_i T + \e_i \text{ for all } 1 \le i \le d\} \sim c' \cdot c \cdot \frac{e^{\delta  T}}{T^{(d-1)/2}} $$
		where $c = c(\bs{\rho},\bs{\beta},\bs{r},\e_1,\dots,\e_d) > 0$ and  $c' = c'(\bs{\rho},\bs{\beta,\bs{r}}) >0$.	Moreover, we have the upper bound \be\label{bdd} \delta_{\bs{\rho},\bs{\beta}}(\bs{r}) \le \min_{1 \le i \le d} r_i\ee  and if all $r_i$ are equal, then the inequality is strict.
	\end{theorem}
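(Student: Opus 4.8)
The plan is to derive \cref{thm:Hitchin} from the general multiple correlation theorem for finite tuples of Anosov representations, whose proof rests on a counting result for Jordan and Cartan projections inside \emph{truncated hypertubes}. First I would record the standard facts: every Zariski dense Hitchin representation $\rho_i:\Sigma\to\PSL_n(\R)$ is Borel--Anosov, hence Anosov with respect to a minimal parabolic, and (by Labourie and Potrie--Sambarino) each simple root $\alpha_k$ is nonnegative on the limit set, so each $\beta_i$ is a proper linear functional of entropy $1$. Setting $G=(\PSL_n\R)^d$ and $\bs\rho=(\rho_1,\dots,\rho_d):\Sigma\to G$, the independence hypothesis together with a Goursat-type argument --- using that the only nontrivial outer automorphism of $\PSL_n\R$ is the contragredient, which is excluded --- shows that $\bs\rho(\Sigma)$ is Zariski dense in $G$; being a product of Borel--Anosov representations, $\bs\rho$ is then a Zariski dense Anosov subgroup of $G$ whose limit cone $\limitcone_{\bs\rho}\subset\mathfrak a^+=(\mathfrak a_n^+)^d$ has non-empty interior.

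Next I would translate the correlation count into a hypertube count. Let $\beta:\mathfrak a\to\R^d$ be the surjective linear map $(H_1,\dots,H_d)\mapsto(\beta_1(H_1),\dots,\beta_d(H_d))$. The condition ``$r_iT\le\beta_i(\lambda(\rho_i(\sigma)))\le r_iT+\e_i$ for all $i$'' says exactly that the combined Jordan projection $\lambda(\bs\rho(\sigma))=(\lambda(\rho_1(\sigma)),\dots,\lambda(\rho_d(\sigma)))$ lies in the slab $\beta^{-1}\big(\prod_i[r_iT,r_iT+\e_i]\big)$, and as $T$ ranges over $[0,\infty)$ these slabs sweep out a bounded neighborhood of the linear subspace $V=\beta^{-1}(\R\bs r)$, of codimension $d-1$ in $\mathfrak a$ --- a hypertube, truncated by the constraint $\beta_1(\lambda(\rho_1(\sigma)))\approx r_1T$. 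The hypothesis that $\bs r$ lies in the interior of $\limitcone_{\bs\beta}=\overline{\beta(\limitcone_{\bs\rho})}$ is precisely what guarantees that $V$ meets the interior of $\limitcone_{\bs\rho}$, so the truncated hypertube counting theorem applies and gives
\[
\#\{[\sigma]\in[\Sigma]:\lambda(\bs\rho(\sigma))\in\mathcal{H}_T\}\sim c\cdot\frac{e^{\delta T}}{T^{(d+1)/2}},
\]
where $\delta=\delta_{\bs\rho,\bs\beta}(\bs r)>0$ is the exponential growth rate of the left side --- an explicit function of $\bs\rho,\bs\beta,\bs r$ arising as a Legendre transform of the relevant pressure (equivalently, a value of the growth indicator of $\bs\rho(\Sigma)$). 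This is the eigenvalue-gap statement; the polynomial factor splits as $T^{-(d-1)/2}$, coming from a local limit theorem for the $\mathfrak a$-valued Cartan cocycle against the tilted Bowen--Margulis--Sullivan measure --- the $d-1$ directions in $\R^d$ transverse to $\R\bs r$ fluctuate like a Gaussian with covariance of order $T$ --- times the usual $T^{-1}$ prime-orbit correction.

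For the singular-value statement I would run the same argument with Cartan projections $\mu(\bs\rho(\sigma))$ in place of Jordan projections and counting $\sigma\in\Sigma$ rather than conjugacy classes; now the relevant shell is an orbit count rather than a prime-orbit count, so the $T^{-1}$ factor disappears and only the Gaussian $T^{-(d-1)/2}$ survives, while the comparison between the Cartan and Jordan asymptotics --- a standard feature of Anosov subgroups, where both are governed by the same Patterson--Sullivan data --- produces the extra constant $c'$. Finally, for the upper bound I would use that Potrie--Sambarino give $\delta_k=1$, so $\#\{[\sigma]:\beta_i(\lambda(\rho_i(\sigma)))\in[r_iT,r_iT+\e_i]\}\ll e^{r_iT}/T$ for each $i$; since the correlation count is at most each of these, $\delta_{\bs\rho,\bs\beta}(\bs r)\le\min_i r_i$. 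When all $r_i$ equal some $r$, equality would force the $d$ events ``$\beta_i(\lambda(\rho_i(\sigma)))\approx rT$'' to occur simultaneously at no exponential cost, i.e.\ to be perfectly correlated; the non-degeneracy of $\limitcone_{\bs\rho}$ (from independence) together with strict convexity of the pressure rules this out, so the inequality is strict.

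The main obstacle, as the abstract signals, is the truncated hypertube count itself. In the rank-one case one counts inside tubes escaping to infinity in a single direction, which is handled via mixing of the one-parameter geodesic or frame flow; here a hypertube escapes along the whole subspace $V$, so one must prove equidistribution and a local limit theorem for the multi-parameter Weyl-chamber ($\mathfrak a$-)action on $\Gamma\backslash G$ against a Bowen--Margulis--Sullivan measure, uniformly over the family of directions filling out $V$, and then extract the sharp leading constant and the exact power $T^{-(d+1)/2}$ from this higher-dimensional renewal/Tauberian argument.
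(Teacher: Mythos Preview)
Your proposal is correct and follows the same route as the paper: verify that independence together with the classification of automorphisms of $\PSL_n\R$ forces $\bs\rho(\Sigma)$ to be Zariski dense in $(\PSL_n\R)^d$, check that each $\beta_i$ is positive on $\limitcone_{\rho_i(\Sigma)}-\{0\}$, and then apply the general correlation theorem (\cref{thm:Correlations}) with $\bs\varphi=\bs\beta$, invoking Potrie--Sambarino's $\delta_k=1$ for the bound \eqref{bdd}. The only small difference is that for the strictness when all $r_i$ are equal the paper cites the tent property of growth indicators \cite{KMO21} rather than arguing directly, but your strict-concavity heuristic is precisely the mechanism behind that result.
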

	We note that the upper bound \eqref{bdd} is independent of $\bs{\rho}$ and $\bs{\beta}$; the reason behind this is the aforementioned fact that $\delta_k=1$ in \eqref{sss} for all Hitchin representations \cite[Theorem B]{PS17}.

	\begin{remark}  When $d=2$, $\beta_1 = \beta_2$ and $\bs{r}=(1,1)$,
		the eigenvalue gap statement was proved by
		Dai-Martone \cite{DM22}.
	\end{remark}

	\subsection*{Multiple correlations of Jordan and Cartan spectra}
	The aforementioned examples are all {\textit{Anosov} representations} (\cite{FG06}, \cite{Lab06}). The main results of this paper are multiple correlations of Jordan and Cartan spectra for general Zariski dense Anosov representations. Let $G$ be a connected semisimple real algebraic group and fix a Cartan decomposition $G=K (\exp \fa^+) K$ where  $K$ is a maximal compact subgroup, $A$ is a maximal real split torus and
	$\fa^+$ is a positive Weyl chamber of $\fa=\op{Lie}A$.
	Let $\mu:G\to \fa^+$ denote the Cartan projection map, that is, $\mu(g)$ is the unique element of $\fa^+$ such that
	$g\in K (\exp \mu(g)) K$ for all $g\in G$. Let $M$ denote the centralizer of $A$ in $K$.
	
	A finitely generated subgroup $\Gamma < G$ is called Anosov
    (or Borel-Anosov)
	if there exists $C>0$ such that for {\it every} simple root $\alpha$ of $(\fg, \fa)$ and $\gamma\in \Gamma$, we have
	\be\label{al} \alpha(\mu(\gamma)) \ge  C|\gamma| -C^{-1}\ee 
	where $| \cdot |$ denotes the word length of $\Gamma$ with respect to a fixed finite set of generators.\footnote{There is a more general definition of an Anosov subgroup where \eqref{al} is required only for a fixed subset of simple roots, which is not dealt with in this paper.}
	This definition is due to Kapovich-Leeb-Porti \cite{KLP17} (see also \cite{Lab06}, \cite{GW12}, \cite{GGKW17} for other equivalent definitions). There are more general definitions of Anosov subgroups with respect to any non-empty subset of simple roots of $(\frak g, \frak a)$; in this paper, we only concern (Borel-)Anosov subgroups defined as above.
Every nontrivial element $\gamma \in \Gamma$ is loxodromic \cite[Lemma 3.1]{GW12} and hence conjugate to an element $\exp(\lambda(\gamma))m(\gamma) $ where $\lambda(\gamma) \in \interior\LieA^+$ is the Jordan projection of $\gamma$ and $m(\gamma)\in M$.
	The conjugacy class $[m(\gamma)]\in [M]$ is uniquely determined and called the holonomy of $\gamma$. We note that $\lambda(\gamma)$ and $[m(\gamma)]$ depend only on the conjugacy class of $\gamma$. 
	The limit cone $\L_\Ga\subset \fa^+$ of $\Ga$ is the smallest closed cone containing $\lambda(\Ga)$; this is a convex cone with non-empty interior when $\Gamma$ is Zariski dense in $G$ \cite{Ben97}. 
	
	For a Zariski dense Anosov subgroup $\Gamma<G$, Sambarino \cite{Sam14a} proved that  for any $\varphi\in \fa^*$  which is positive on $\limitcone_\Ga - \{0\}$,
	\begin{equation}
		\label{eqn:Sambarino}
		\#\{[\gamma] \in [\Gamma] : \varphi(\lambda(\gamma)) \le T\} \sim \frac{e^{\delta_{\varphi}T}}{\delta_{\varphi}T} \quad \text{as } T \to \infty
	\end{equation}
	where $\delta_{\varphi}=\delta_{\Gamma,\varphi} = \lim_{T \to \infty}\frac{1}{T}\log \#\{[\gamma] \in [\Gamma]: \varphi(\lambda(\gamma)) \le T\}.$
	See also \cite[Corollaries 1.4 and 1.10]{CF23} for a different proof which
	also  includes holonomies.
	
	In order to state our correlation theorems, let $G_1, \dots, G_d$ be any connected semisimple real algebraic groups for $d\ge 2$. Fix a finitely generated group $\Sigma$ and let  
	$$\bs{\rho}=(\rho_i:\Sigma \to G_i)_{1\le i \le d}$$ 
	be a $d$-tuple of Anosov representations\footnote{I.e., $\rho_i(\Sigma)$ is an Anosov subgroup of $G_i$.}. We assume that $\bs{\rho}(\Sigma)$ is Zariski dense in $\prod_{i=1}^d G_i$. 
	We use the same notations for $G_i$ as we did for $G$ but with a subscript $i$. 
	Let  $$\bp:\bigoplus_{i=1}^d \fa_i\to \br^d$$ be a linear map given by
	$\bp=(\varphi_i:\LieA_i \to \R)_{1 \le i \le d}$ where
	$\varphi_i$ is a linear form on $\fa_i$ which is positive on $\limitcone_{\rho_i(\Sigma)} -\{0\}$ for each $i$. Let $\limitcone_{\bs{\varphi}} = \limitcone_{\bs{\rho},\bs{\varphi}} \subset \R^d$ be the smallest closed cone containing all vectors 
	$$\bs{\varphi}(\lambda(\bs{\rho}(\sigma)))=(\varphi_1(\lambda(\rho_1(\sigma))),\dots,(\varphi_d(\lambda(\rho_d(\sigma))) \text{ for } \sigma \in \Sigma.$$ 
	We define the holonomy group $M_{\bs{\rho}}$ of $\bs{\rho}$ as the smallest closed subgroup of $\prod_{i=1}^d M_i$ generated by all the $d$-tuples of holonomies 
	$${ m}(\bs{\rho}(\sigma))=(m(\rho_1(\sigma)),\dots,m(\rho_d(\sigma)) \text{ for } \sigma \in \Sigma.$$ 
	By \cite[Corollary 1.10]{GR07}, this is a finite index normal subgroup of $\prod_{i=1}^d M_i$. Let $\vol_{M_{\bs{\rho}}}$ denote the Haar probability measure on $M_{\bs{\rho}}$. 
	
	The following is the main theorem of this paper:
	\begin{theorem}[Multiple correlations of Jordan and Cartan spectra]
		\label{thm:Correlations} 
		For any interior vector $\bs{r}=(r_1,\dots,r_d) \in \limitcone_{\bs{\varphi}}$, there exists $\delta=\delta_{\bs{\rho},\bs{\varphi}}(\bs{r}) > 0$ such that for any $\e_1,\dots,\e_d > 0$ and for any conjugation invariant Borel subset $\Theta < M_{\bs{\rho}}$ with $\vol_{M_{\bs{\rho}}}(\partial\Theta)=0$, we have as $T \to \infty$, 
		\begin{multline*}
			\#\{[\sigma] \in [\Sigma] : \, \bs{\varphi}(\lambda(\bs{\rho}(\sigma))) \in \prod_{i=1}^d [r_iT, r_iT+\e_i], \, m(\bs{\rho}(\sigma)) \in \Theta\} 
			\\ \sim c\cdot\frac{e^{\delta T}}{T^{(d+1)/2}}\vol_{M_{\bs{\rho}}}(\Theta)
		\end{multline*}
		and
		$$\#\{\sigma \in \Sigma : \bs{\varphi}(\mu(\bs{\rho}(\sigma))) \in \prod_{i=1}^d [r_iT, r_iT+\e_i]\} \sim c' \cdot c\cdot\frac{e^{\delta T}}{T^{(d-1)/2}}$$ 
		where $c = c(\bs{\rho}, \bs{\varphi}, \bs{r}, \e_1,\dots,\e_d) > 0$ and $c' = c'(\bs{\rho}, \bs{\varphi}, \bs{r})>0$.
		Moreover, we have the upper bound
		\be\label{bbb} \delta_{\bs{\rho},\bs{\varphi}}(\bs{r}) \le \min_{1 \le i \le d} \delta_{\rho_i(\Sigma),\varphi_i} r_i\ee 
		which is strict when all $\delta_{\rho_i(\Sigma),\varphi_i} r_i$ are equal.
	\end{theorem}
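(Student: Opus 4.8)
\emph{Overview.} The plan is to pass to the product group, recast both asymptotics as counting problems for the Jordan (resp.\ Cartan) projections of a single Zariski dense Anosov subgroup against a family of truncated \emph{hypertubes}, and then run the dynamical counting machinery --- mixing of the full Cartan flow together with a local limit theorem --- the new feature being that these hypertubes are unbounded along a linear subspace of directions rather than a single direction. Concretely, set $G=\prod_{i=1}^{d}G_i$, with Cartan subspace $\fa=\bigoplus_i\fa_i$, rank $r:=\sum_i r_i$ (where $r_i=\op{rank}G_i$), maximal compact $K=\prod_iK_i$, and $M=\prod_iM_i$. Anosov representations being faithful with discrete image, $\Gamma:=\bs{\rho}(\Sigma)$ is a Zariski dense (by hypothesis) Anosov subgroup of $G$, with $\lambda(\bs{\rho}(\sigma))=(\lambda(\rho_i\sigma))_i$, $\mu(\bs{\rho}(\sigma))=(\mu(\rho_i\sigma))_i$, and holonomy $m(\bs{\rho}(\sigma))=(m(\rho_i\sigma))_i\in M_{\bs{\rho}}$. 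For $T>0$ put $\mathcal{H}_T:=\{v\in\fa^+:\bs{\varphi}(v)\in\prod_{i=1}^{d}[r_iT,\,r_iT+\e_i]\}$; this is a \emph{truncated hypertube}: it lies in the $(r-d+1)$-dimensional cone $\{v:\bs{\varphi}(v)\in\R_{>0}\bs{r}\}$, has bounded width transverse to $\ker\bs{\varphi}$, yet is unbounded along the $(r-d)$-dimensional subspace $\ker\bs{\varphi}$ --- when every $G_i$ has rank one this subspace is trivial and $\mathcal{H}_T$ is an honest tube, as in \cite{CO23}. The two quantities in the theorem are precisely $\#\{[\gamma]\in[\Gamma]:\lambda(\gamma)\in\mathcal{H}_T,\ m(\gamma)\in\Theta\}$ and $\#\{\gamma\in\Gamma:\mu(\gamma)\in\mathcal{H}_T\}$.

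\emph{The functional and the entropy.} Let $\psi_\Gamma$ be the growth indicator of $\Gamma$; since $\Gamma$ is Zariski dense, $\L_\Gamma$ has non-empty interior \cite{Ben97} and $\psi_\Gamma$ is concave, and for $\Gamma$ Anosov it is moreover strictly concave transverse to the radial directions on $\op{int}\L_\Gamma$. As $\mathcal{H}_T$ is asymptotic to the affine slice $\{\bs{\varphi}(v)=\bs{r}\}$, let $v_0\in\op{int}\L_\Gamma$ be the unique maximizer of $\psi_\Gamma$ on that slice --- interior to $\L_\Gamma$, and also to $\fa^+$ since $\Gamma$ is Borel--Anosov --- and set $\delta=\delta_{\bs{\rho},\bs{\varphi}}(\bs{r}):=\psi_\Gamma(v_0)$. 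By Lagrange multipliers there is $\bs{\mu}\in\R^d$, $\mu_i>0$, so that $\psi:=\bs{\mu}\circ\bs{\varphi}\in\fa^*$ is tangent to $\psi_\Gamma$ at $v_0$: $\psi\ge\psi_\Gamma$ on $\fa^+$, $\psi(v_0)=\delta$, with $\psi-\psi_\Gamma$ having positive-definite Hessian at $v_0$ transverse to $\R v_0$. Then the $\psi$-Poincar\'e series has critical exponent $1$, the $(\Gamma,\psi)$-Patterson--Sullivan densities exist, and, $v_0$ being interior to $\L_\Gamma$, the Bowen--Margulis--Sullivan measure $\m_\psi$ on $\Gamma\backslash G$ is finite with mixing Cartan flow. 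On $\mathcal{H}_T$ one has $\psi(v)=\bs{\mu}\cdot\bs{\varphi}(v)\in[\delta T,\delta T+\bs{\mu}\cdot\bs{\e}]$, so belonging to $\mathcal{H}_T$ is a length window $\psi(\lambda(\gamma))\in[\delta T,\delta T+O(1)]$ plus $d-1$ transverse box constraints. The upper bound follows from $\psi_\Gamma(v_0)\le\psi_{\rho_i(\Sigma)}(v_0^i)\le\delta_{\rho_i(\Sigma),\varphi_i}\,\varphi_i(v_0^i)=\delta_{\rho_i(\Sigma),\varphi_i}r_i$ (first inequality because $\Gamma$ embeds in each factor $\rho_i(\Sigma)$), and strictness when the $\delta_{\rho_i(\Sigma),\varphi_i}r_i$ coincide follows, exactly as in \cite{CO23}, from strict concavity of $\psi_\Gamma$ and Zariski density of $\Gamma$ in the product.

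\emph{The count.} For a single unit box centred at $Tv_0$ in $\fa$, the counting correspondence for Anosov subgroups --- in the form of local mixing of the Cartan flow against $\m_\psi$ (for Jordan projections) resp.\ the Burger--Roblin measure (for Cartan projections); cf.\ \cite{CF23}, \cite{Sam14a} --- yields $\asymp e^{\delta T}T^{-(r+1)/2}$ conjugacy classes and $\asymp e^{\delta T}T^{-(r-1)/2}$ group elements, the power $T^{-(r-1)/2}$ being the Gaussian factor from the $r-1$ directions of $\fa$ transverse to the flow and the extra $1/T$ in the conjugacy count the usual primitivity factor. The hypertube $\mathcal{H}_T$ relaxes exactly the $r-d$ of these constraints lying in $\ker\bs{\varphi}$; summing the box estimate over the $\ker\bs{\varphi}$-directions --- each contributing a central-limit scale $\int_{\ker\bs{\varphi}}e^{-Q(u)/T}\,du\asymp T^{(r-d)/2}$, with $Q$ the positive-definite transverse Hessian of $-\psi_\Gamma$ at $v_0$ restricted to $\ker\bs{\varphi}$ --- multiplies both counts by $T^{(r-d)/2}$, giving $e^{\delta T}T^{-(d+1)/2}$ and $e^{\delta T}T^{-(d-1)/2}$. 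In practice one smooths $\mathbf{1}_{\mathcal{H}_T}$ by a partition of unity, rewrites the smoothed sum over $\Gamma$ as a matrix coefficient of the $\fa$-action on $(\Gamma\backslash G,\m_\psi)$ against Patterson--Sullivan data, inserts the local limit theorem for the $\fa$-valued cocycle $\mu$ (equivalently $\lambda$) under $\m_\psi$ --- whose covariance is positive-definite \emph{precisely because} $\bs{\rho}$ is independent, i.e.\ $\limitcone_{\bs{\varphi}}$ has non-empty interior --- and integrates over $\ker\bs{\varphi}$. The holonomy refinement comes from running this on the $M_{\bs{\rho}}$-extension: $M_{\bs{\rho}}$ being a finite-index normal subgroup of $\prod_iM_i$ \cite{GR07}, the extension is ergodic, so $m(\gamma)$ equidistributes toward $\vol_{M_{\bs{\rho}}}$ independently of the spatial constraint, and $\vol_{M_{\bs{\rho}}}(\partial\Theta)=0$ lets one pass from smoothed cutoffs to $\mathbf{1}_\Theta$, producing the factor $\vol_{M_{\bs{\rho}}}(\Theta)$; the Cartan statement is run in parallel with $\m_\psi$ replaced by the Burger--Roblin measure and the $K$-components tracked, accounting for the loss of the primitivity factor (whence $T^{-(d-1)/2}$ in place of $T^{-(d+1)/2}$) and for the constant $c'$ relating the Cartan and Jordan asymptotics.

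\emph{Main obstacle.} The step I expect to be hardest is this last one: since $\mathcal{H}_T$ is unbounded along $\ker\bs{\varphi}$ --- a linear subspace of directions, not the single direction occurring for honest tubes in \cite{CO23} --- one needs a local limit theorem for $\mu$ (and $\lambda$) under $\m_\psi$ that is uniform over the entire growing hypertube, with error terms small enough that the summation over the $\ker\bs{\varphi}$-directions reproduces the factor $T^{(r-d)/2}$ with no parasitic loss or gain. Establishing this uniformity, and choosing the truncation of the hypertubes so that the contributing $\gamma$ are concentrated at the central-limit scale $O(\sqrt{T})$ in the $\ker\bs{\varphi}$-directions, is the technical heart of the argument.
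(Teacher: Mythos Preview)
Your proposal is essentially correct and follows the same route as the paper: pass to $\Gamma=\bs{\rho}(\Sigma)$ in the product, locate the critical vector $\v^\star$ as the maximizer of $\psi_\Gamma$ on the slice $\bs{\varphi}^{-1}(\bs{r})$ (your Lagrange-multiplier observation that the tangent form factors through $\bs{\varphi}$ is exactly the paper's defining condition $\ker\bs{\varphi}\subset\ker\psi_{\v^\star}$), truncate the hypertube along $\v^\star$, and feed the resulting shape into local mixing plus an integral over $\ker\bs{\varphi}$ to recover the factor $T^{(r-d)/2}$. One slip worth flagging: for $r>1$ the BMS measure $\m_\psi$ on $\Gamma\backslash G$ is \emph{infinite} (it is the product of a finite measure on a compact base $\cal{X}_\v$ with Lebesgue on $\ker\psi$), so only \emph{local} mixing is available---which you do invoke correctly afterward, so the argument is not damaged. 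Your identification of the main obstacle is on the mark; in the paper the required uniformity comes from the exponential-decay bound in the second clause of the local mixing theorem, and the integration over $\ker\bs{\varphi}$ is carried out via a dominated-convergence computation on the truncated hypertube.
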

	
	\begin{remark}
	\begin{itemize}
	    \item 
		The exponent $\delta=\delta_{\bs{\rho},\bs{\varphi}}(\bs{r}) $ is given by $\psi_{\bs{\rho}(\Sigma)}(\v^{\star})$ where $\psi_{\bs{\rho}(\Sigma)}$ is
		the growth indicator of the subgroup $\bs{\rho}(\Sigma)<\prod_{i=1}^d G_i$ and $\v^{\star}=\v^{\star}(\bp, \bs{r})$ is the unique $(\bp,\bs{r})$-critical vector in the interior of the limit cone of $\bs{\rho}(\Sigma)$ (see \cref{thm:Counting} and the proof of \cref{thm:Correlations}). The bound \eqref{bbb} is then deduced from \cite[Theorem 1.4 and its proof]{KMO21} in which it is determined when
		the equality in \eqref{bbb}  happens.
        \item 
        For $\bs{\rho}$ fixed, the exponent $\delta_{\bs{\rho}, \bs{\varphi}}(\bs{r})$ is continuous on the parameter $(\bs{\varphi}, \bs{r})$ because the map 
        $(\bs{\varphi}, \bs{r}) \mapsto \v^{\star} $ 
        is continuous by Lemma
        \ref{lem:Bijection} and the growth indicator 
        $\psi_{\bs{\rho}(\Sigma)}$ is continuous  in the interior of the limit cone of $\bs{\rho}(\Sigma)$ by Quint \cite{Qui02a}. 
        \item For a sequence $\bs{\rho}_k\to \bs{\rho}$ in the space $\text{Hom}(\Sigma,\prod_{i=1}^d G_i)$,
        we have $\delta_{\bs{\rho}_k, \bs{\varphi}}(\bs{r})\to \delta_{\bs{\rho}, \bs{\varphi}}(\bs{r}) $ as $k\to \infty$. This follows from the recent work \cite[Theorem 1.4]{DO}, which says that the growth indicators vary continuously on the deformation space of an Anosov subgroup.
         \end{itemize}
	\end{remark}
	\begin{figure}[H]
		\begin{minipage}{0.45\textwidth}
			\includegraphics[scale = 0.23]{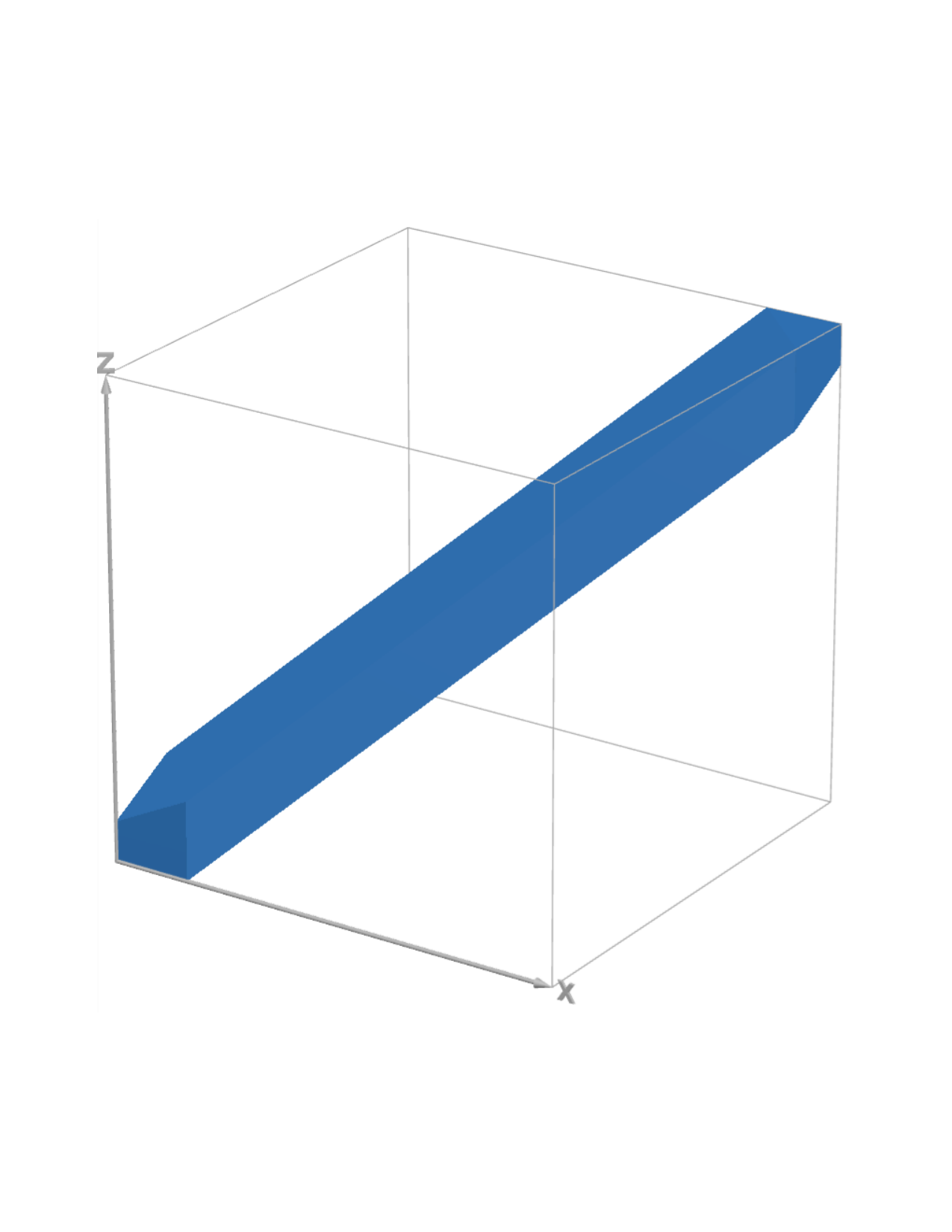}
		\end{minipage}
		\begin{minipage}{0.45\textwidth}
			\includegraphics[scale = 0.23]{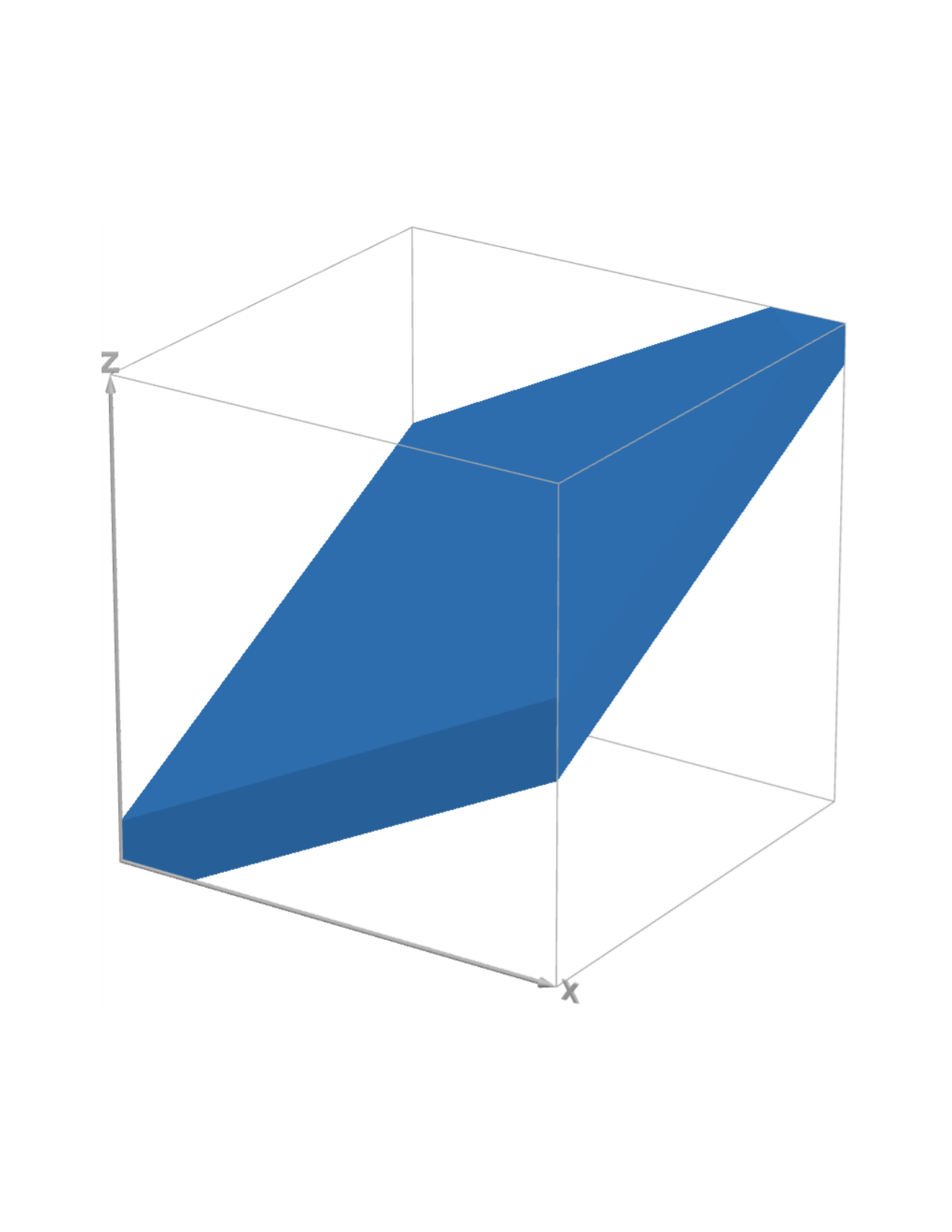}
		\end{minipage}
		\caption{Tube vs. hypertube.} 
		\label{fig:Hypertube}
	\end{figure}

	\subsection*{Comparison with the previous work}  We have previously proved \cref{thm:Correlations} in the case where every $G_i$ has rank one \cite{CO23} (under a slightly stronger assumption on $\bp$). There is a main conceptual difference between that case and the current general case we are dealing with. In that setting, the rank of the product $\prod_{i=1}^d G_i $ is equal to $d$ and hence
	the map $\bp:\bigoplus_{i=1}^d \fa_i\simeq\br^d\to \br^d$ is an isomorphism. Therefore
	the set  $\bs{\varphi}^{-1}(\prod_{i=1}^d [r_iT, r_iT+\e_i]) = \bs{\varphi}^{-1}(\prod_{i=1}^d [0, \e_i]) + T\bs{\varphi}^{-1}(\bs{r})$ is a translation of a compact subset and sweeps out  a {\it tube} going to infinity along the direction of a vector $\bs{\varphi}^{-1}(\bs{r})$ as $T$ increases. Note that there is only {\it one} direction of a tube which tends to infinity and hence only one way to truncate the tubes for studying asymptotics.  
	
	However, when some $G_i$ has rank at least 2, the linear map $$\bp:\bigoplus_{i=1}^d \fa_i \simeq \br^{\sum_{i=1}^d \op{rank} G_i} \to \br^d$$ is not injective and hence the preimage $\bs{\varphi}^{-1}(\bs{r})$ is an affine subspace of positive dimension. As a result, the union $\bigcup_{T>0} \bs{\varphi}^{-1}(\prod_{i=1}^d [r_iT, r_iT+\e_i])$ is  a hypertube which goes to infinity in all directions of the affine subspace $\bs{\varphi}^{-1}(\bs{r})$ (see \cref{fig:Hypertube}). 
	This feature presents a novel difficulty in this higher rank correlation problem, since unlike for tubes, there are many different truncations of the hypertube that can be considered.
	Coming up with a truncation simultaneously adapted to $\bs{\rho}$ and the linear map $\bp$ that can be analyzed is the main challenge in the current setting.

	\subsection*{Outline of the proof of \cref{thm:Correlations}} For simplicity, we will discuss only correlations of Jordan spectra. 	We first explain the relation between correlations of Jordan spectra for a $d$-tuple $\bs{\rho}$ of Anosov representations and counting the Jordan spectra of a single Anosov subgroup $\Gamma$. 
	
	The hypotheses in \cref{thm:Correlations} imply that $\Gamma=\bs{\rho}(\Sigma)$ is a Zariski dense Anosov subgroup of the product $G=\prod_{i=1}^d G_i$, that the cone $\limitcone_{\bs{\varphi}}$ coincides with the image $\bs{\varphi}(\limitcone_{\Ga})$ of the limit cone and that the restriction $\bs{\varphi}|_{\limitcone_{\Ga}}$ is a proper map. We will in fact study a general Zariski dense Anosov subgroup $\Gamma$ of any semisimple real algebraic group $G$ and a general surjective linear map $$\bs{\varphi}:\LieA \to \R^d \;  \text{such that } \bs{\varphi}|_{\limitcone_\Ga}  \text{ is proper}.$$
	In particular, $1\le d \le \rankG = \dim \LieA$ and $\ker\bp$  has dimension $\rankG-d$. Fix a vector $\bs{r}=(r_1, \dots, r_d)$ in the interior of the cone $\bs{\varphi}(\limitcone_\Ga)$ and 
	$\e_1,\dots,\e_d > 0$.
	The properness of $\bs{\varphi}|_{\limitcone_\Ga}$ implies that for each $T>0$,
	\be\label{lll} \#\{[\gamma] \in [\Gamma] : \, \bv(\lambda(\gamma) )\in \prod_{i=1}^d[r_iT,  r_iT+\e_i]\}<\infty
	\ee 
	and
	our goal is to find an asymptotic as $T\to \infty$. Geometrically, for any choice of  $\v\in \fa^+$ with
	$\bp(\v)=\bs{r}$, the shape of the set
	$$\{\u  \in \LieA: \bs{\varphi}(\u) \in \prod_{i=1}^{d}[r_iT,r_iT+\e_i]\} \cap \L_\Ga$$
	is an {\it infinite prism} which slides in the direction of $\v$ as $T$ increases, intersected with the limit cone $\L_\Ga$. 
	We consider their union:
	\begin{equation}
		\label{eqn:Intro1}
		\T  = \bigcup_{T>0}
		\{\u  \in \L_\Ga: \bs{\varphi}(\u) \in \prod_{i=1}^{d}[r_iT,r_iT+\e_i]\}.
	\end{equation} 
	If $\sQ \subset \LieA$ is a compact subset such that $\bs{\varphi}(\sQ) = \prod_{i=1}^{d}[0,\e_i]$ and $V = \bs{\varphi}^{-1}(\R\bs{r})$ is the preimage of the line $\R\bs{r}$, then
	we have (\cref{lem:Hypertube})
	$$\T = (\sQ + V) \cap \L_\Ga;$$
	and a set of this form will be called a hypertube as one can think of $\T$ as obtained from translating $\sQ$ in all directions of $V \cap \L_\Ga$ (see \cref{fig:Hypertube2}).
	
	\begin{figure}[H]
		\includegraphics[scale = 0.27]{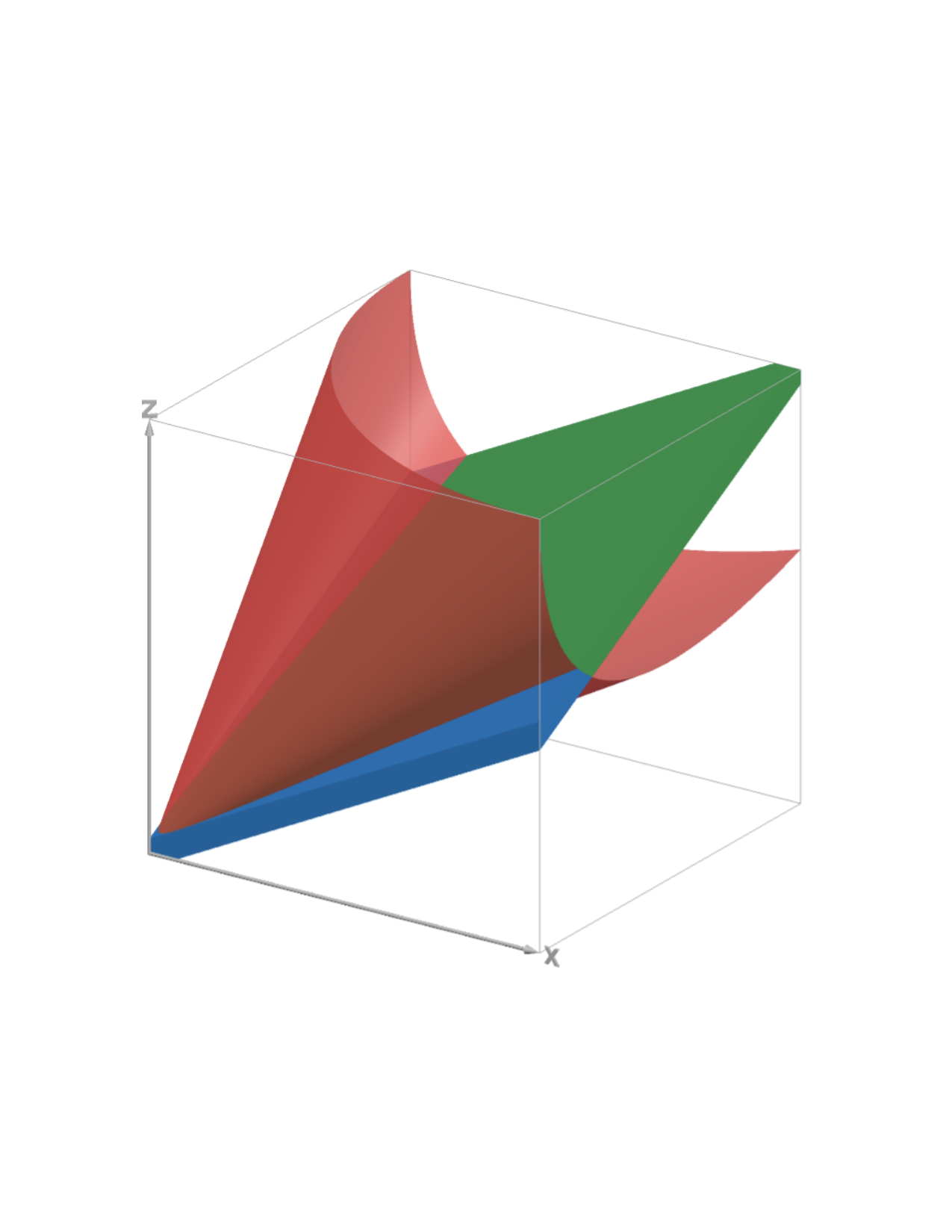}
		\caption{A hypertube of $\limitcone$.} \label{fig:Hypertube2}
	\end{figure}
	
	For each $T>0$, consider a truncation $\T_T:$ \begin{align*}
		\T_T:=\{\u  \in \L_\Ga: \bs{\varphi}(\u) \in \prod_{i=1}^{d}[0,r_iT+\e_i]\}.
	\end{align*}
	
	We show that
	for some vector $\v^{\star} \in \fa^+$, 
	\be\label{asym} \#\{[\gamma] \in [\Gamma]: \lambda(\gamma) \in \T_{T}\} \sim c \cdot \frac{e^{\growthindicator(\v^{\star}) T}}{T^{(d+1)/2}}\quad \text{as $T\to \infty$}\ee 
	where  $\psi_\Ga$ is the growth indicator of $\G$ (see \eqref{gr}) and $c>0$ is a constant (Theorem \ref{thm:JordanCounting}).
	A priori, the existence of $\v^{\star}\in \fa^+$ satisfying \eqref{asym} is not clear at all. To explain where to find such a vector $\v^{\star}\in \fa^+$, observe that  for any vector $\v\in \LieA^+\cap \bp^{-1}(\bs{r})$, the truncation $\T_T$ can be expressed as
	\begin{align*} \T_T = (\sQ + [0,T]\v + \ker\bs{\varphi})\cap\L_\Ga 	.\end{align*}	 When $\T$ is a tube, such a vector $\v$ is uniquely determined, and this $\v$ is $\v^{\star}$, satisfying \eqref{asym}. In a general hypertube case, there is a positive dimensional choice of such $\v\in \bp^{-1}(\bs{r})$. Using the Anosov property of $\Gamma$, we show that there exists a unique vector  $\v^{\star} \in \interior\limitcone\cap  \bp^{-1}(\bs{r})  $ such that 
	$$\ker\bs{\varphi} = V \cap \ker\psi_{\v^{\star}},$$
	where $\psi_{\v^{\star}}\in \fa^*$ is the unique linear form tangent to $\psi_\Ga$ at ${\v^{\star}}$  (\cref{lem:CriticalVector}). We call $\v^{\star}$ a $(\bp,\bs{r})$-critical vector.
	Moreover $\v^{\star}$ satisfies (\cref{lem:sup}):
	$$\psi_\Ga(\v^{\star})= \sup_{\w  \in \interior\limitcone, \, \bs{\varphi}(\w ) = \bs{r}}\psi_{\Gamma}(\w ).$$ 
	
	Once we express the truncation $\T_T$
	as $$\T_T=  (\sQ + [0,T]\v^{\star} + (V\cap \ker\psi_{\v^{\star}}))\cap \L_\Ga   $$ using the critical vector $\v^{\star}$,
	we can	use the framework of our previous work \cite{CO23} to prove \eqref{asym} using local mixing \cref{thm:DecayofMatrixCoefficients}.
	
	The reason  why a formula like \eqref{asym} is relevant to \eqref{lll} is because we can find continuous functions $b_1,b_2 \in C(\sQ )$ such that for all $T > 0$,
	$$\{\u  \in \L_\Ga : \bs{\varphi}(\u) \in \prod_{i=1}^{d}[r_iT,r_iT+\e_i]\} = \T_{T,b_1}(\v^{\star}) - \T_{T,b_2}(\v^{\star})$$
	where 
	$$\T_{T,b_i}(\v^{\star}) = \{\sf{q}  + \v' + t\v^{\star}  \in \T\cap \L_\Ga: \sf{q}  \in \sQ , \, \sf{v}' \in V\cap \ker\psi_{\v^{\star}}, \, 0 \le t \le T +b_i({\sf{q}})\}.$$  
	Indeed, we show \eqref{asym} for
	$\T_{T,b_i}(\v^{\star}) $ in place of $\T_T$ and deduce the asymptotic for \eqref{lll}.

	\subsection*{Organization}
	
	\begin{itemize}
		\item  In \cref{sec:Preliminaries}, we recall some preliminaries on Lie theory and discrete subgroups.
		
		\item  In \cref{sec:CriticalVector}, we specialize to Zariski dense Anosov subgroups $\Gamma < G$ and study linear maps $\bs{\varphi}:\LieA\to \R^d$ which are proper on $\limitcone_\Ga$.  We prove the existence and uniqueness of a $(\bs{\varphi},\bs{r})$-critical direction of $\Gamma$ for any $\bs{r} \in \interior\bs{\varphi}(\limitcone_\Ga)$ (\cref{lem:CriticalVector}).
		
		\item In \cref{sec:Hypertubes}, we define {\textit{hypertubes}} (\cref{def:Hypertube}) and their truncations. We explain the relationship between correlations of linearized Jordan and Cartan spectra for Anosov representations and counting Jordan and Cartan projections in hypertubes of an Anosov subgroup.
		
		\item In \cref{sec:EquidistributionStatements}, we prove joint equidistribution of Jordan projections in hypertubes and holonomies of $\Gamma$ (\cref{thm:JointEquidistribution}) and equidistribution of $\Gamma$ in bisectors defined using hypertubes (\cref{thm:BisectorEquidistribution}), generalizing \cite{CO23}. The asymptotics for counting Jordan and Cartan projections in hypertubes are deduced from these equidistribution results (\cref{thm:JordanCounting,thm:BisectorEquidistribution}). 
		
		\item In \cref{sec:ProofOfCorrelations}, we use the results of the previous sections to deduce \cref{thm:Counting} on asymptotics for linearly correlated Jordan and Cartan projections of $\Gamma$. The correlation theorems in the introduction are then deduced from \cref{thm:Counting}.
	\end{itemize}
	
	\section{Preliminaries on discrete subgroups of $G$}
	\label{sec:Preliminaries}
	
	Throughout the paper, let $G$ be a connected semisimple real algebraic group. Fixing a Cartan involution of the Lie algebra $\LieG$ of $G$,
	let $\LieG = \LieK \oplus \LieP$ be the  eigenspace decomposition corresponding to the eigenvalues $+1$ and $-1$ respectively. Let $K < G$ be the maximal compact subgroup whose Lie algebra is $\mathfrak{k}$. Let $\LieA \subset \LieP$ be a maximal abelian subalgebra and choose a closed positive Weyl chamber $\LieA^+ \subset \LieA$. We denote by $\Phi^+$ the set of all positive roots for $(\LieG, \LieA)$ with respect to the choice of $\fa^+$. Let $w_0 \in K$ be a representative of the longest Weyl element so that \begin{equation}\label{woo} \Ad_{w_0}(\LieA^+) = -\LieA^+.\end{equation} The map $\involution: \LieA^+ \to \LieA^+$ defined by $\involution(\w ) = -\Ad_{w_0}(\w )$ is called the opposition involution of $G$.  Let $A = \exp \LieA$ and  $A^+ = \exp \LieA^+$.
	
	Let $P$ be the minimal parabolic subgroup of $G$ given as $P=MAN$ where $M$ is the centralizer of $A$ in $K$ and $\log N$
	consists of all root subspaces corresponding to positive roots.
	The quotient
	$\cal{F}: = G/P $
	is called the {Furstenberg boundary} of $G$. By the Iwasawa decomposition $G=KAN$,
	we have $G/P\simeq K/M$. The {Iwasawa cocycle} $\sigma: G \times \Fboundary \to \LieA$ is the map which assigns to each $(g,kM) \in G \times \Fboundary$ the unique element $\sigma(g,kM) \in \LieA$ such that  $gk \in K\exp(\sigma(g,\xi))N$. The {$\LieA$-valued Busemann function} $\beta: \Fboundary \times G \times G \to \LieA$ is defined by
	\begin{equation}\label{buse}
		\beta_\xi(g_1,g_2) = \sigma(g_1^{-1}, \xi) - \sigma(g_2^{-1}, \xi)
	\end{equation}
	for all $g_1,g_2 \in G$ and $\xi \in \Fboundary$.

	\subsection*{Cartan and Jordan projections} 
	For $g \in G$, let  $\mu(g)$ denote the {Cartan projection} of $g$, i.e., $\mu(g) \in \LieA^+$ is the unique element in $\LieA^+$ such that 
	$g \in K\exp(\mu(g))K.$ 	Any non-trivial element $g\in G$ can be written as the commuting product $g=g_hg_e g_u$ where $g_h$ is hyperbolic, $g_e$ is elliptic and $g_u$ is unipotent. 
	The hyperbolic component $g_h$ is conjugate to a unique element $\exp \lambda(g) \in A^+$ and $\lambda(g)$ is called 
	the Jordan projection of $g$.
	When $\lambda(g)\in \inte \fa^+$, $g\in G$ is called {\it{loxodromic}} in which case $g_u$ is necessarily trivial and $g_e$ is conjugate to
	an element $m(g)\in M$ which is unique up to conjugation in $M$. We call its conjugacy class
	$[m(g)]\in [M]$ the {\it holonomy} of $g$. 
	
	\subsection*{Limit set, limit cone and holonomy group.}
	\label{subsec:LimitSetAndLimitCone} Let $\Ga<G$ be a Zariski dense discrete subgroup.
	Let $\La\subset \F$ denote the limit set of $\Ga$, which is the unique $\Gamma$-minimal subset of $\Fboundary$ \cite{Ben97}.
	The {\textit{limit cone}} $\limitcone = \limitcone_\Gamma $ of $\Gamma$ is the smallest closed cone containing the Jordan projections $\lambda(\Gamma)$. Benoist \cite{Ben97} proved that $\limitcone$ is convex and has non-empty interior. 
	The {\textit{holonomy group}} of $\Gamma$ is the closed subgroup $$M_\Gamma < M$$ generated by the holonomy classes $[m(\gamma)]$, $\gamma\in \Ga$.
	By \cite[Corollary 1.10]{GR07}, $M_\Gamma$ is a normal subgroup of $M$ of finite index. In general, $M_\Ga\ne M$ (e.g., Hitchin representations \cite[Theorem 1.5]{Lab06}).

	\subsection*{Growth indicators.} 
	The {growth indicator} $\growthindicator : \LieA^+ \to \R \cup \{-\infty\}$ of $\Gamma$ is defined by
	\begin{equation}\label{gr}
		\growthindicator(\w ) = \|\w \| \inf_{\text{open cones }\cal{C}\ni \w } \tau_\cal{C} \qquad \text{for all non-zero $\w  \in \LieA^+$}
	\end{equation}
	where  $\|\cdot\|$ is any norm on $\fa$ and $\tau_\cal{C}$ is the abscissa of convergence of the series $t \mapsto \sum_{\gamma \in \Gamma, \mu(\gamma) \in \cal{C}} e^{-t\|\mu(\gamma)\|}$. We set $\psi_\Ga(0)=0$.
    This definition is independent of the choice of a norm. We have that $\growthindicator$ is
    upper semicontinuous and concave. Moreover, it is positive on ${\interior\L}$ \cite[Theorem 4.2.2]{Qui02a}. These properties imply that $\growthindicator$ is continuous on $\inte\L$.

	Set $$\L^*=\{\psi\in \fa^*:\psi\ge 0 \text{ on $\cal L$}\}.$$ Then
	$$\inte \L^*=\{\psi\in \fa^*:\psi>0 \text{ on }\L-\{0\}\}.$$ For $\psi\in \L^*$, set
	$$\delta_\psi=\delta_{\Ga, \psi}=\limsup_{T\to \infty} \frac{\log \#\{\ga\in \Ga:\psi(\mu(\ga))<T\}}{T}\in [0, \infty].$$ A linear form $\psi \in \LieA^*$ is said to be tangent to $\growthindicator$ at $\v\in \fa^+-\{0\}$ if $\growthindicator \le \psi$ and $\growthindicator(\v)=\psi(\v).$ 
	
	\begin{theorem}[{\cite[Lemma 2.4 and Theorem 2.5]{KMO21}}]
		\label{thm:delta}
		For any non-zero $\psi\in \L^*$ with $\delta_\psi<\infty$,
		the linear form $\delta_\psi \psi $ is tangent to $\psi_\Ga$ and $\delta_\psi>0$.
		Moreover, if $\psi\in \inte\L^*$, then $\delta_\psi<\infty$.
	\end{theorem}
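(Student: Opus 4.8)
The plan is to recover both assertions from the standard variational relationship between $\delta_\psi$ and $\growthindicator$; besides the properties of $\growthindicator$ recalled above, I use two standard facts: $\growthindicator\equiv-\infty$ on $\fa^+\setminus\L$, and every accumulation point on the unit sphere of the Cartan directions $\{\mu(\ga)/\|\mu(\ga)\|:\ga\in\Ga\}$ lies in $\L$ (Benoist \cite{Ben97}). \emph{Step 1: the inequality $\growthindicator\le\delta_\psi\psi$ on $\fa^+$.} This is vacuous off $\L$, so fix a unit vector $\v\in\L$; then $\psi(\v)\ge0$. Suppose $\growthindicator(\v)>\delta_\psi\psi(\v)$ and choose $\delta'\in(\delta_\psi\psi(\v),\growthindicator(\v))$ together with a thin open cone $\C\ni\v$ on which $\psi\le(\psi(\v)+\eta)\|\cdot\|$ for a prescribed $\eta>0$. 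Since $\tau_\C\ge\growthindicator(\v)>\delta'$ by the definition of $\growthindicator$, there are arbitrarily large $R$ with $\#\{\ga:\mu(\ga)\in\C,\ \|\mu(\ga)\|<R\}\ge e^{\delta'R}$; every such $\ga$ satisfies $\psi(\mu(\ga))<(\psi(\v)+\eta)R$, so with $T=(\psi(\v)+\eta)R$ we get $\#\{\ga:\psi(\mu(\ga))<T\}\ge e^{\delta'T/(\psi(\v)+\eta)}$ for arbitrarily large $T$, hence $\delta_\psi\ge\delta'/(\psi(\v)+\eta)$; letting $\eta\to0$ contradicts $\delta'>\delta_\psi\psi(\v)$. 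When $\psi(\v)=0$ the same estimate forces $\growthindicator(\v)\le0$ (using $\delta_\psi<\infty$), and $\growthindicator(\v)\ge0$ since $\v\in\L$, so the inequality still holds.

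\emph{Step 2: tangency.} The function $\u\mapsto\delta_\psi\psi(\u)-\growthindicator(\u)$ is lower semicontinuous and, by Step 1, nonnegative on the compact set $\L\cap\{\|\cdot\|=1\}$; let $m\ge0$ be its minimum there, attained at some $\v^\star$. If $m>0$, then $\growthindicator\le\delta_\psi\psi-m\|\cdot\|$ on $\L$, so for every sufficiently small $s>0$ the linear form $(\delta_\psi-s)\psi$ still strictly dominates $\growthindicator$ on $\L\setminus\{0\}$; by the convergence/divergence dichotomy for the growth indicator (Quint \cite{Qui02a}) this yields $\sum_{\ga\in\Ga}e^{-(\delta_\psi-s)\psi(\mu(\ga))}<\infty$, which is impossible because $\delta_\psi$ is precisely the abscissa of convergence of $\sum_{\ga}e^{-t\psi(\mu(\ga))}$ (the hypothesis $\delta_\psi<\infty$ makes the sublevel sets $\{\ga:\psi(\mu(\ga))<T\}$ finite). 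Hence $m=0$, i.e.\ $\growthindicator(\v^\star)=\delta_\psi\psi(\v^\star)$; together with Step 1 this shows $\delta_\psi\psi$ is tangent to $\growthindicator$ at $\v^\star$.

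\emph{Step 3: $\delta_\psi>0$ and the ``moreover''.} Since the nonzero form $\psi$ is nonnegative on the full-dimensional cone $\L$, the constant $C:=\sup_{\|\u\|=1}\psi(\u)$ is finite and positive, and $\psi(\mu(\ga))\le C\|\mu(\ga)\|$; hence $\#\{\ga:\psi(\mu(\ga))<T\}\ge\#\{\ga:\|\mu(\ga)\|<T/C\}$, and the right-hand side grows exponentially because $\growthindicator>0$ on $\inte\L$ (Quint \cite{Qui02a}), so $\delta_\psi>0$. For the ``moreover'', if $\psi\in\inte\L^*$ then $\psi\ge c>0$ on the compact set $\L\cap\{\|\cdot\|=1\}$, so by continuity $\psi\ge(c/2)\|\cdot\|$ on some open cone $\C\supset\L\setminus\{0\}$; by the Benoist fact all but finitely many $\ga\in\Ga$ have $\mu(\ga)\in\C$, whence $\psi(\mu(\ga))\ge(c/2)\|\mu(\ga)\|$ for all but finitely many $\ga$, so $\#\{\ga:\psi(\mu(\ga))<T\}\le\#\{\ga:\|\mu(\ga)\|<2T/c\}+O(1)$, which has finite exponential growth rate; thus $\delta_\psi<\infty$.

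I expect Step 2 to be the main obstacle: excluding the possibility that $\delta_\psi\psi$ and $\growthindicator$ never agree requires the precise comparison, due to Quint, between the growth indicator and convergence of the Dirichlet-type series $\sum_\ga e^{-\varphi(\mu(\ga))}$ (alternatively, one can pin down a tangency direction directly by a pigeonhole-and-refinement argument that localizes the exponential growth of the $\psi$-sublevel sets to a single asymptotic direction, using only Step 1 and compactness of the sphere). The remaining steps are elementary comparisons of counting functions.
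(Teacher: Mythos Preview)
The paper does not give its own proof of this theorem: it is stated with a citation to \cite[Lemma 2.4 and Theorem 2.5]{KMO21} and used as a black box. Your argument is essentially the standard one (and presumably close to what appears in \cite{KMO21}): Step~1 is the usual comparison of $\growthindicator$ with $\delta_\psi\psi$ via thin cones, Step~3 is the elementary norm comparison, and Step~2 is the key point, where you correctly invoke Quint's convergence criterion to rule out a uniform gap. The logic is sound, including the identification of $\delta_\psi$ (defined in the paper as a $\limsup$) with the abscissa of convergence of $\sum_\ga e^{-t\psi(\mu(\ga))}$, which you justify via finiteness of sublevel sets.

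Two small remarks. First, in the $\psi(\v)=0$ case of Step~1, the clause ``and $\growthindicator(\v)\ge0$ since $\v\in\L$'' is unnecessary: you only need $\growthindicator(\v)\le0=\delta_\psi\psi(\v)$, which you have already established. Second, in Step~2 you should make explicit that strict domination $(\delta_\psi-s)\psi>\growthindicator$ extends from $\L\setminus\{0\}$ to all of $\fa^+\setminus\{0\}$ (trivially, since $\growthindicator\equiv-\infty$ there), so that Quint's convergence theorem applies in the form you need. Neither point is a gap.
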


	\section{Critical vectors for proper linear maps}
	\label{sec:CriticalVector}
	Let $\Gamma$ be a Zariski dense Anosov subgroup of $G$ as defined
	in \eqref{al}. Then its limit cone $\L$ is contained in  $\interior\LieA^+ \cup \{0\}$ \cite[Proposition 4.6]{PS17}. Fix $1 \le d \le \rankG$ and a surjective linear map
	$$\bs{\varphi}:\LieA \to \R^d \text{ such that } \bs{\varphi}|_\limitcone \text{ is proper}.$$
	We note that $\bs{\varphi}|_\limitcone$ is a proper map if and only if $\ker \bs{\varphi}\cap \L=\{0\}$.
	Define the {\it{$\bs{\varphi}$-projection}} of $\limitcone$: $$\limitcone_{\bs{\varphi}}:=\bs{\varphi}(\limitcone) \subset \br^d.$$
	
	\begin{lemma}
		\label{lem:ProjectedCone}
		The set $\limitcone_{\bs{\varphi}}$ is a closed convex cone and $\interior\limitcone_{\bs{\varphi}} = \bs{\varphi}(\interior\limitcone)$.
	\end{lemma}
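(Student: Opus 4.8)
The plan is to treat the three claims that $\limitcone_{\bs{\varphi}}$ is a cone, that it is convex, and that it is closed separately --- the first two being formal consequences of linearity of $\bs{\varphi}$ together with Benoist's theorem that $\limitcone$ is a convex cone --- and then to deduce the identification of the interior from convexity combined with the fact that a surjective linear map is open. Convexity and the cone property of $\bs{\varphi}(\limitcone)$ are immediate: $\bs{\varphi}$ carries nonnegative scalings to nonnegative scalings and convex combinations to convex combinations, so I would dispose of these first.

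The only substantive point is closedness, and this is exactly where the hypothesis $\ker\bs{\varphi}\cap\limitcone=\{0\}$ (equivalently, properness of $\bs{\varphi}|_\limitcone$) must be used. I would intersect $\limitcone$ with the unit sphere $S$ of $\fa$ to obtain a compact set $C=\limitcone\cap S$ with $\limitcone=\R_{\ge 0}\cdot C$, whence $\limitcone_{\bs{\varphi}}=\R_{\ge 0}\cdot K$ for the compact set $K:=\bs{\varphi}(C)$. Properness forces $0\notin K$, so $\inf_{w\in K}\|w\|>0$, and then the standard argument applies: if $t_n w_n\to z$ with $t_n\ge 0$ and $w_n\in K$, then $t_n=\|t_nw_n\|/\|w_n\|$ stays bounded, so after passing to a subsequence $t_n\to t$ and $w_n\to w\in K$, giving $z=tw\in\limitcone_{\bs{\varphi}}$. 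Hence $\limitcone_{\bs{\varphi}}$ is closed. I expect this to be the main (and essentially the only non-formal) step.

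For the interior, one inclusion is soft: since $\bs{\varphi}$ is surjective and linear it is an open map, so $\bs{\varphi}(\interior\limitcone)$ is open in $\R^d$ and contained in $\limitcone_{\bs{\varphi}}$, giving $\bs{\varphi}(\interior\limitcone)\subseteq\interior\limitcone_{\bs{\varphi}}$ (and this set is nonempty since $\interior\limitcone\neq\emptyset$ by Benoist). For the reverse inclusion, I would fix once and for all a point $v_0\in\interior\limitcone$, so that $\bs{\varphi}(v_0)\in\interior\limitcone_{\bs{\varphi}}$. Given $\bs{r}\in\interior\limitcone_{\bs{\varphi}}$, for all small $s>0$ the point $\bs{r}+s(\bs{r}-\bs{\varphi}(v_0))$ still lies in $\limitcone_{\bs{\varphi}}$, say it is $\bs{\varphi}(u')$ with $u'\in\limitcone$; then
\[
\bs{r}=\tfrac{1}{1+s}\,\bs{\varphi}(u')+\tfrac{s}{1+s}\,\bs{\varphi}(v_0)=\bs{\varphi}\!\Bigl(\tfrac{1}{1+s}u'+\tfrac{s}{1+s}v_0\Bigr),
\]
and the convex combination in the last expression lies in $\interior\limitcone$ because it mixes the closure point $u'$ with the interior point $v_0$, with positive weight on the latter. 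Thus $\bs{r}\in\bs{\varphi}(\interior\limitcone)$, which completes the identification $\interior\limitcone_{\bs{\varphi}}=\bs{\varphi}(\interior\limitcone)$.
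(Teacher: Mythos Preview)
Your proof is correct and follows essentially the same route as the paper's. The paper invokes \cite[Theorem~9.1]{Roc70} as a black box for closedness and the open mapping theorem for the interior identification; you unpack the former via the standard compactness argument and supply the reverse inclusion $\interior\limitcone_{\bs{\varphi}}\subseteq\bs{\varphi}(\interior\limitcone)$ explicitly via the convexity trick (which the paper glosses over), but the underlying ideas are identical.
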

	
	\begin{proof}
		By \cite[Theorem 9.1]{Roc70}, if a linear map is nonzero on a given closed convex cone in $\LieA$ except at $0$, then the image of the given cone is a closed convex cone in $\R^d$. Hence by our hypothesis that $\bs{\varphi}|_\limitcone$ is proper,  $\limitcone_{\bs{\varphi}}$ is a closed convex cone. Moreover, by the Banach open mapping theorem which says that a surjective linear map is an open map, we have $\interior\limitcone_{\bs{\varphi}} = \bs{\varphi}(\interior\limitcone)$.
	\end{proof}

	We will use the following property of Zariski dense Anosov subgroups:
	\begin{theorem}
		\label{bij}
		\label{thm:Anosov} 
		We have:
		\begin{enumerate}
			\item The growth indicator $\psi_{\Gamma}$ is analytic and strictly concave on $\interior\limitcone$, except along rays emanating from the origin. It is also vertically tangent, meaning that if $\psi\in \fa^*$
			is tangent to $\psi_\Ga$ at $\v\in \L$, then $\v\in \inte \L$. 
			
			\item  For each vector $\v\in \inte\L$, there exists a unique linear form 
			\be\label{pv} \psi_\v\in \inte \L^*\ee  tangent to $\growthindicator$ at $\v$. Moreover, the map $\v\mapsto \|\v\| \psi_{\v}$ is a homeomorphism $\inte\L\to \inte\L^*$.
		\end{enumerate}
	\end{theorem}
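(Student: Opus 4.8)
The plan is to deduce both parts from the convex‑analytic duality (Quint \cite{Qui02a}) between the growth indicator $\growthindicator$ and the abscissas of convergence $\delta_\psi$, for which the one substantial input is the \emph{analyticity and strict convexity of the dual hypersurface} $\cal N:=\{\psi\in\inte\L^*:\delta_\psi=1\}$. So the first — and main — step is to establish this. For it I would use the thermodynamic formalism available for Anosov subgroups (Sambarino \cite{Sam14a}, Potrie--Sambarino \cite{PS17}): $\Gamma$ admits a coding by a topologically mixing metric Anosov flow (a suspension over a subshift of finite type), and for each $\psi\in\fa^*$ there is a Hölder roof function $f_\psi$, linear in $\psi$, whose integral over the periodic orbit labelled by a conjugacy class $[\gamma]$ equals $\psi(\lambda(\gamma))$; for $\psi\in\inte\L^*$ the function $f_\psi$ is cohomologous to a strictly positive one and $\delta_\psi$ is the unique positive solution of $\mathsf{P}(-\delta_\psi f_\psi)=0$, with $\mathsf{P}$ the topological pressure of the base flow. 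Real‑analyticity of $\psi\mapsto\delta_\psi$ on $\inte\L^*$ then follows from Ruelle's analyticity of $\mathsf{P}$ in the Hölder potential together with the implicit function theorem (the $t$‑derivative of $\mathsf{P}(-tf_\psi)$ is strictly negative), and strict convexity of $\cal N$ from the fact that the Hessian of the convex function $\psi\mapsto\mathsf{P}(-f_\psi)$ equals, up to a positive scalar, the covariance matrix of $\{f_\psi\}$ with respect to the associated equilibrium state, which is positive definite: a nontrivial relation making $f_\psi-f_{\psi'}$ cohomologous to a constant would force $(\psi-\psi')(\lambda(\gamma))=0$ for all $\gamma$ (passing to powers kills the constant), hence $\psi=\psi'$ since $\lambda(\Gamma)$ spans $\fa$ by Zariski density \cite{Ben97}. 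I expect this positive‑definiteness to be the genuine crux; it is the one step that is not convex analysis and is where Zariski density is really used.

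Granting this, put $N:=\{\psi\in\inte\L^*:\delta_\psi\le1\}$. Then $N$ is a convex subset of $\inte\L^*$, relatively closed there, with $N\cap\partial N=\cal N$ and recession cone $\L^*$ (for $\ell\in\L^*$ one has $\psi+t\ell\ge\psi$ on $\lambda(\Gamma)$, so $\delta_{\psi+t\ell}\le\delta_\psi$), and moreover $\delta_\psi\to+\infty$ as $\psi\to\partial\L^*$ — this uses, besides \cref{thm:delta}, the standard facts that $\growthindicator>0$ on $\L\setminus\{0\}$ for Anosov $\Gamma$ and that Cartan projections of $\Gamma$ stay within bounded distance of $\L$ — so that $N$ is in fact closed in $\fa^*$ with $\partial N=\cal N$. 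Next I would invoke Quint's Legendre duality \cite{Qui02a}, whose easy direction is \cref{thm:delta} (if $\delta_\psi\le1$ then $\delta_\psi\psi$ is tangent to $\growthindicator$, whence $\growthindicator\le\delta_\psi\psi\le\psi$ on $\L$): the linear forms dominating $\growthindicator$ on $\fa^+$ are precisely the elements of $N$, so that
$$\growthindicator(\v)=\inf\{\psi(\v):\psi\in N\}\qquad(\v\in\L);$$
that is, $\growthindicator|_{\inte\L}$ is the support function of $N$. Since $\partial N=\cal N$ is an analytic, strictly convex hypersurface with recession cone $\L^*$, standard convex duality yields part (1) outright apart from vertical tangency: $\growthindicator$ is real‑analytic and differentiable on $\inte\L$ and strictly concave transversally to the rays $\R_{>0}\v$ — along which it is of course linear, hence not strictly concave, by homogeneity.

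For part (2): for $\v\in\inte\L=\inte((\L^*)^*)$ the functional $\psi\mapsto\psi(\v)$ is coercive on $N$ relative to its recession cone $\L^*$, so the infimum above is attained, necessarily at a point of $\partial N=\cal N$ (an interior point of $N$ cannot minimize a nonconstant linear functional on $N$) and uniquely so by strict convexity of $\cal N$; this unique minimizer is exactly the unique linear form $\psi_\v$ tangent to $\growthindicator$ at $\v$, it equals $\nabla\growthindicator(\v)$, and it lies on $\cal N\subset\inte\L^*$ — establishing \eqref{pv}. Analyticity of $\growthindicator$ makes $\v\mapsto\psi_\v$ continuous. The map $\v\mapsto\|\v\|\psi_\v$ is injective: if $\|\v_1\|\psi_{\v_1}=\|\v_2\|\psi_{\v_2}$ then $\psi_{\v_1},\psi_{\v_2}$ lie on one ray of $\inte\L^*$, and each such ray meets $\cal N$ exactly once (along $\R_{>0}\psi_0$ one has $\delta_{t\psi_0}=\delta_{\psi_0}/t$, decreasing continuously onto $(0,\infty)$), so $\psi_{\v_1}=\psi_{\v_2}$; then $\v_1,\v_2$ share a tangent form, hence lie on one ray by strict concavity, and finally $\|\v_1\|=\|\v_2\|$, so $\v_1=\v_2$. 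It is surjective onto $\inte\L^*$ by the same observation: given $\psi_0\in\inte\L^*$, the ray $\R_{>0}\psi_0$ crosses $\cal N$ at $\delta_{\psi_0}\psi_0=\psi_\v$ for a unique $\v\in\inte\L$, and then $\|t\v\|\psi_{t\v}=\psi_0$ for the appropriate $t>0$. A continuous bijection between open subsets of $\fa$ and $\fa^*$, which have the same dimension $\rankG$, is a homeomorphism by invariance of domain; this proves (2).

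Finally, vertical tangency. If $\psi\in\fa^*$ is tangent to $\growthindicator$ at $\v\in\L\setminus\{0\}$, then $\psi\ge\growthindicator\ge0$ on $\L$, so $\psi$ dominates the support function of $N$ on $\L$, whence $\psi\in N$, and $\psi$ minimizes $\psi'\mapsto\psi'(\v)$ over $N$; as $\v\ne0$ this forces $\psi\in\partial N=\cal N$, i.e. $\delta_\psi=1$. Suppose toward a contradiction $\v\in\partial\L$, and choose a supporting functional $\ell_0\in\L^*\setminus\{0\}$ of $\L$ at $\v$, so $\ell_0(\v)=0$. Then for every $t\ge0$, $\psi+t\ell_0\in N$ (recession cone) and $(\psi+t\ell_0)(\v)=\psi(\v)=\growthindicator(\v)$, so $\psi+t\ell_0$ again minimizes $\psi'\mapsto\psi'(\v)$ over $N$ and hence again lies on $\partial N=\cal N$; thus $\{\psi+t\ell_0:t\ge0\}\subset\cal N$, a line segment in $\cal N$, contradicting its strict convexity. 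Therefore $\v\in\inte\L$, which is the vertical tangency assertion and completes (1). Everything after the first paragraph — Quint's duality, the convex‑geometric deductions, and \cref{thm:delta} — is routine bookkeeping once the analytic strict convexity of $\cal N$ is in hand, which is therefore the step I expect to be the main obstacle.
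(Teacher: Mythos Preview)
Your approach is essentially the same as the paper's, which simply cites Quint's duality lemma \cite[Lemma 4.3]{Qui03} and Potrie--Sambarino \cite[Proposition 4.11]{PS17} for part (1), and then deduces part (2) from the derivative of $\growthindicator$ together with invariance of domain. You have unpacked these citations in detail: the thermodynamic formalism establishes analytic strict convexity of the dual hypersurface $\cal N=\{\delta_\psi=1\}$, and Quint's duality transfers this to $\growthindicator$ via the support-function identity. Your invariance-of-domain endgame for the homeomorphism matches one of the two alternatives the paper offers.

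One step deserves care. You assert that $\growthindicator>0$ on $\L\setminus\{0\}$ for Anosov $\Gamma$ and use this to conclude $\delta_\psi\to+\infty$ as $\psi\to\partial\L^*$, hence that $N$ is closed in $\fa^*$ with $\partial N=\cal N$. The positivity of $\growthindicator$ on $\partial\L$ is not standard and is in fact delicate; for Anosov groups the growth indicator can vanish on the boundary of the limit cone. Fortunately your argument does not really need the global identification $\partial N=\cal N$: in the vertical-tangency step you already know $\psi$ and $\psi+t\ell_0$ lie in $\inte\L^*$ (since $\psi\in N\subset\inte\L^*$ and $\ell_0\in\L^*$), and $\partial N\cap\inte\L^*=\cal N$ follows immediately from continuity of $\psi\mapsto\delta_\psi$ on $\inte\L^*$, which you have already established. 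For part (2), rather than arguing abstractly that the infimum over $N$ is attained, it is cleaner to observe that differentiability of $\growthindicator$ on $\inte\L$ (from strict convexity of $\cal N$) gives the tangent form as $\psi_\v=\nabla\growthindicator(\v)$, and the thermodynamic picture identifies this directly with a point of $\cal N\subset\inte\L^*$ --- which is what \cite[Proposition 4.11]{PS17} does. With this adjustment your argument is complete.
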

	
	The first property in \cref{thm:Anosov} follows from  Quint's duality lemma \cite[Lemma 4.3]{Qui03} and the work of Potrie-Sambarino \cite[Proposition 4.11]{PS17}. The second property is deduced from the first property by using the derivative of $\psi_\Gamma$ to establish a continuous bijection $\interior\limitcone \to \interior\limitcone^*$. That it is indeed a homeomorphism can be proved either by using the upper semi-continuity of $\psi_\Ga$  (see \cite[Proposition 4.4]{LO20b}) or by
	applying the invariance of domain theorem which states that a continuous injection $\R^n \to \R^n$ is in fact a homeomorphism onto its image.
	
	\begin{definition}
		\label{def:CriticalVector}
		For a given vector $\bs{r}\in \br^d$, a vector $\v \in \interior\limitcone$ is called a {\it{$(\bs{\varphi},\bs{r})$-critical vector}} of $\Gamma$ if it satisfies
		$$\bs{\varphi}(\v) = \bs{r} \quad \text{ and } \quad \ker\bs{\varphi} < \ker\psi_\v.$$ 
	\end{definition}
	
	The following proposition plays a key role in our study of multiple correlation problem:
	\begin{proposition}
		\label{lem:CriticalVector}
		For any $\bs{r}\in \interior\limitcone_{\bs{\varphi}}$,
		there exists a unique $(\bs{\varphi},\bs{r})$-critical vector $\v^\star=\v^{\star}(\bp, \bs{r})\in \inte\L$ of $\Gamma$. 
	\end{proposition}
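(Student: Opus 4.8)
The plan is to characterize the critical vector as a maximizer of a strictly concave function on a compact slice, and then to extract the critical equation from the first-order optimality condition. Fix $\bs{r} \in \interior\limitcone_{\bs{\varphi}}$. First I would consider the affine slice $S_{\bs{r}} = \bs{\varphi}^{-1}(\bs{r}) \cap \interior\limitcone$, which is nonempty by \cref{lem:ProjectedCone} since $\bs{r} \in \interior\limitcone_{\bs{\varphi}} = \bs{\varphi}(\interior\limitcone)$, and convex. I would show $S_{\bs{r}}$ is bounded: any sequence $\v_k \in S_{\bs{r}}$ with $\|\v_k\| \to \infty$ would have a subsequence with $\v_k/\|\v_k\|$ converging to some nonzero $\w \in \limitcone$ with $\bs{\varphi}(\w) = \lim \bs{r}/\|\v_k\| = 0$, contradicting $\ker\bs{\varphi} \cap \limitcone = \{0\}$. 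Hence $\overline{S_{\bs{r}}}$ is compact. On it the function $\w \mapsto \psi_\Gamma(\w)$ is upper semicontinuous and concave, so it attains a maximum; I claim the maximum is attained at an interior point $\v^\star \in S_{\bs{r}}$ and not on $\overline{S_{\bs{r}}} \setminus S_{\bs{r}} \subset \partial\limitcone$. Indeed, $\psi_\Gamma$ is positive on $\interior\limitcone$ and is vertically tangent (first part of \cref{thm:Anosov}), which forces its sublevel behavior near $\partial\limitcone$ to prevent boundary maxima; more carefully, if a maximizer $\v$ lay on $\partial\limitcone$, then because $\psi_\Gamma$ is positive on $\interior\limitcone$ and $S_{\bs{r}}$ contains interior points, upper semicontinuity together with concavity along the segment from $\v$ into $S_{\bs{r}}$ yields a strictly larger value nearby — I would spell this out using that $\psi_\Gamma$ restricted to a segment $[\v, \w]$ with $\w \in \interior\limitcone$ is concave and positive at $\w$, hence cannot be maximized at the endpoint $\v$ unless it is constant, which is ruled out by strict concavity except along rays.

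Next, with $\v^\star \in S_{\bs{r}}$ an interior maximizer, I would invoke analyticity of $\psi_\Gamma$ on $\interior\limitcone$ (first part of \cref{thm:Anosov}) so that $\psi_\Gamma$ is differentiable at $\v^\star$, and the first-order condition for maximizing $\psi_\Gamma$ over the affine subspace $\bs{\varphi}^{-1}(\bs{r})$ reads $d(\psi_\Gamma)_{\v^\star}|_{\ker\bs{\varphi}} = 0$, i.e. $\ker\bs{\varphi} < \ker\, d(\psi_\Gamma)_{\v^\star}$. The remaining point is to identify the differential $d(\psi_\Gamma)_{\v^\star}$ with the tangent linear form $\psi_{\v^\star}$ from \eqref{pv}. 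This is where I would use that $\psi_\Gamma$ is concave, homogeneous of degree one, and analytic on $\interior\limitcone$: for such a function the tangent form at $\v^\star$ — the unique $\psi \in \fa^*$ with $\psi \ge \psi_\Gamma$ and $\psi(\v^\star) = \psi_\Gamma(\v^\star)$ — coincides with the gradient $d(\psi_\Gamma)_{\v^\star}$, since by differentiability the supporting hyperplane at a smooth point is unique and equals the tangent plane, and Euler's relation $d(\psi_\Gamma)_{\v^\star}(\v^\star) = \psi_\Gamma(\v^\star)$ gives the normalization. Thus $\ker\bs{\varphi} < \ker\psi_{\v^\star}$, so $\v^\star$ is a $(\bs{\varphi},\bs{r})$-critical vector.

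For uniqueness, suppose $\v_1, \v_2 \in \interior\limitcone$ are both $(\bs{\varphi},\bs{r})$-critical. Running the identification of the tangent form with the gradient in reverse, the condition $\ker\bs{\varphi} < \ker\psi_{\v_i}$ together with $\psi_{\v_i}(\v_i) = \psi_\Gamma(\v_i)$ says exactly that $\v_i$ is a critical point of $\psi_\Gamma|_{\bs{\varphi}^{-1}(\bs{r})}$; by concavity a critical point of a concave function on a convex set is a global maximum, so both $\v_1$ and $\v_2$ maximize $\psi_\Gamma$ on $S_{\bs{r}}$. If $\v_1 \ne \v_2$ and they are not on a common ray from the origin, strict concavity of $\psi_\Gamma$ on $\interior\limitcone$ (first part of \cref{thm:Anosov}) applied to the midpoint $\tfrac12(\v_1 + \v_2) \in S_{\bs{r}}$ gives $\psi_\Gamma(\tfrac12(\v_1+\v_2)) > \tfrac12(\psi_\Gamma(\v_1) + \psi_\Gamma(\v_2))$, contradicting maximality. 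If instead $\v_2 = t\v_1$ for some $t > 0$, $t \ne 1$, then $\bs{\varphi}(\v_2) = t\bs{r} = \bs{r}$ forces $\bs{r} = 0$, excluded since $\bs{r} \in \interior\limitcone_{\bs{\varphi}}$ and $\limitcone_{\bs{\varphi}}$ is a proper cone (as $\ker\bs{\varphi}\cap\limitcone = \{0\}$ prevents $\limitcone_{\bs{\varphi}}$ from being all of $\R^d$ when... actually $\bs{r}=0$ is simply not interior to a cone). Hence $\v_1 = \v_2$.

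The main obstacle I anticipate is the identification of the gradient $d(\psi_\Gamma)_{\v^\star}$ with the tangent linear form $\psi_{\v^\star}$ and the accompanying verification that the maximizer is genuinely interior to the slice rather than on $\partial\limitcone$; both hinge on using the full strength of \cref{thm:Anosov} (analyticity, strict concavity off rays, vertical tangency) rather than just concavity, and care is needed because $\psi_\Gamma$ is only upper semicontinuous globally. Everything else — compactness of the slice, existence of a maximizer, the first-order condition, and the concavity argument for uniqueness — is routine once that identification is in place.
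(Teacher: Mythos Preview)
Your proposal is essentially correct and takes a genuinely different route from the paper. The paper proves \cref{lem:CriticalVector} by establishing that $\bs{\varphi}$ restricts to a homeomorphism from $\sf{K}_{\bs{\varphi}}=\{\v\in\interior\limitcone:\ker\bs{\varphi}<\ker\psi_\v\}$ onto $\interior\limitcone_{\bs{\varphi}}$ (\cref{lem:Bijection}): injectivity comes from playing the tangent-form inequalities $\psi_{\v_2}(\v_1)>\psi_{\v_1}(\v_1)=\psi_{\v_1}(\v_2)$ against their symmetric counterpart, and surjectivity comes from invariance of domain plus a closedness argument that uses vertical tangency. The maximizer characterization you start from is then derived afterward as \cref{lem:sup}. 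You reverse the logic: you prove existence by maximizing $\psi_\Gamma$ on the compact slice $\overline{S_{\bs{r}}}$ and reading off $\ker\bs{\varphi}<\ker\psi_{\v^\star}$ from the first-order condition via the identification $\psi_{\v^\star}=d(\psi_\Gamma)_{\v^\star}$, and uniqueness from strict concavity. Your route is more elementary for the bare existence/uniqueness statement; the paper's route buys the continuity of $(\bs{\varphi},\bs{r})\mapsto\v^\star$ for free, which is used later (see the remark after \cref{thm:Correlations}).

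One point in your outline needs more care: the argument that the maximizer lies in $S_{\bs{r}}$ rather than on $\overline{S_{\bs{r}}}\cap\partial\limitcone$. Your sketch (``concave and positive at $\w$, hence cannot be maximized at the endpoint $\v$ unless constant'') is not valid as stated---a concave function on a segment can certainly be maximized at an endpoint. What actually works is precisely the vertical tangency argument the paper uses for closedness in \cref{lem:Bijection}: if $\v\in\partial\limitcone$ were a maximizer with $\psi_\Gamma(\v)$ finite, approach it along the segment from some $\w\in S_{\bs{r}}$, look at the tangent forms $\psi_{\v_t}$ along the way, and split into the bounded case (a subsequential limit gives a tangent form at $\v$, contradicting vertical tangency) and the unbounded case (the normalized limit lies in $\limitcone^*\setminus\{0\}$, vanishes at $\v$, yet is nonpositive at $\w\in\interior\limitcone$, a contradiction). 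So the obstacle you flag is real, and its resolution is essentially the same limiting argument the paper carries out; once you spell that out, your proof is complete. Your handling of uniqueness, including the observation that $\v_2=t\v_1$ with $t\ne1$ forces $\bs{r}=0\notin\interior\limitcone_{\bs{\varphi}}$, is fine.
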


	The point of \cref{lem:CriticalVector} is that although the affine subspace $\bp^{-1}(\bs{r})$ has dimension $\rankG-d$, it contains a unique vector $\v^\star$ such that the subspace $\ker \psi_{\v^{\star}}$  (of dimension $\rankG-1$) contains the subspace $\ker\bp$ of dimension  $\rankG-d$. 
    
    Set $$ \sf{K}_{\bs{\varphi}} 
	=\{\v\in \inte\L: \ker\bs{\varphi} < \ker\psi_\v\}. $$

	Proposition \ref{lem:CriticalVector} follows from the following:
	\begin{lemma}
		\label{lem:Bijection}
		The map $\bs{\varphi}|_{\sf{K}_{\bs{\varphi}}}: \sf{K}_{\bs{\varphi}} \to \interior\limitcone_{\bs{\varphi}}$ 
		is a homeomorphism. In particular, for any $\bs{r} \in \interior \limitcone_{\bs{\varphi}}$,
		$\bs{\varphi}^{-1}(\bs{r})\cap \sf{K}_{\bs{\varphi}} =\{\v^\star\} $
		is the unique $(\bs{\varphi},\bs{r})$-critical vector of $\Gamma$. 
	\end{lemma}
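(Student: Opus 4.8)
The plan is to realize $\sf{K}_{\bs{\varphi}}$ as a $d$-dimensional object through the homeomorphism $\v\mapsto\|\v\|\psi_\v$ of \cref{thm:Anosov}(2), deduce injectivity of $\bs{\varphi}|_{\sf{K}_{\bs{\varphi}}}$ from strict concavity of $\psi_\Ga$, obtain openness from invariance of domain, and get surjectivity by showing the image is closed. Concretely, I would set $\Psi\colon\interior\limitcone\to\interior\limitcone^*$, $\Psi(\v)=\|\v\|\psi_\v$, which is a homeomorphism by \cref{thm:Anosov}(2), and let $\bs{\varphi}^*\colon(\R^d)^*\to\fa^*$, $\eta\mapsto\eta\circ\bs{\varphi}$, with image $W^*:=\bs{\varphi}^*((\R^d)^*)$, a $d$-dimensional subspace since $\bs{\varphi}$ is surjective. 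The two facts to check are: $\ker\bs{\varphi}<\ker\psi$ holds exactly when $\psi\in W^*$; and, because properness of $\bs{\varphi}|_{\limitcone}$ forces $\bs{\varphi}(\limitcone\setminus\{0\})\subset\limitcone_{\bs{\varphi}}\setminus\{0\}$, one has $\bs{\varphi}^*(\eta)\in\interior\limitcone^*$ iff $\eta\in\interior\limitcone_{\bs{\varphi}}^*$. Combining these, $\sf{K}_{\bs{\varphi}}=\Psi^{-1}(W^*\cap\interior\limitcone^*)=\Psi^{-1}(\bs{\varphi}^*(\interior\limitcone_{\bs{\varphi}}^*))$, so $\Psi$ together with the linear isomorphism $\bs{\varphi}^*$ identifies $\sf{K}_{\bs{\varphi}}$ with $\interior\limitcone_{\bs{\varphi}}^*$, which is a nonempty open convex subset of $\R^d$ (closedness and non-empty interior of $\limitcone_{\bs{\varphi}}$ from \cref{lem:ProjectedCone}, pointedness again from properness). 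In particular $\sf{K}_{\bs{\varphi}}$ is a nonempty $d$-manifold and $\bs{\varphi}|_{\sf{K}_{\bs{\varphi}}}$ maps into $\interior\limitcone_{\bs{\varphi}}=\bs{\varphi}(\interior\limitcone)$.

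For injectivity, suppose $\v_1,\v_2\in\sf{K}_{\bs{\varphi}}$ with $\bs{\varphi}(\v_1)=\bs{\varphi}(\v_2)=\bs{r}$. Since $\psi_{\v_i}$ vanishes on $\ker\bs{\varphi}$, it is constant equal to $\psi_\Ga(\v_i)$ on the affine slice $\bs{\varphi}^{-1}(\bs{r})$; as $\psi_\Ga\le\psi_{\v_i}$ everywhere, each $\v_i$ maximizes $\psi_\Ga$ over $\bs{\varphi}^{-1}(\bs{r})\cap\limitcone$, so $\psi_\Ga(\v_1)=\psi_\Ga(\v_2)$. Now $\bs{r}\ne0$ (since $0\notin\interior\limitcone$ and $\ker\bs{\varphi}\cap\limitcone=\{0\}$), so the segment $[\v_1,\v_2]\subset\interior\limitcone$ does not lie on a ray through the origin; hence if $\v_1\ne\v_2$, strict concavity (\cref{thm:Anosov}(1)) makes $\psi_\Ga$ strictly larger at the midpoint, which also lies in the slice, a contradiction. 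Thus $\bs{\varphi}|_{\sf{K}_{\bs{\varphi}}}$ is a continuous injection of the $d$-manifold $\sf{K}_{\bs{\varphi}}$ into $\R^d$, so by invariance of domain it is an open map onto an open set $U\subset\interior\limitcone_{\bs{\varphi}}$ and restricts to a homeomorphism onto $U$.

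It then remains to see $U=\interior\limitcone_{\bs{\varphi}}$; since $U$ is nonempty and open and $\interior\limitcone_{\bs{\varphi}}$ is connected, it suffices to show $U$ is closed in $\interior\limitcone_{\bs{\varphi}}$. Take $\v_n\in\sf{K}_{\bs{\varphi}}$ with $\bs{\varphi}(\v_n)\to\bs{r}\in\interior\limitcone_{\bs{\varphi}}$. Properness of $\bs{\varphi}|_{\limitcone}$ forces $(\v_n)$ to be bounded (a rescaled limit of an unbounded subsequence would be a unit vector in $\ker\bs{\varphi}\cap\limitcone$), so after a subsequence $\v_n\to\v\in\limitcone$ with $\bs{\varphi}(\v)=\bs{r}\ne0$. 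Write $\psi_{\v_n}=\bs{\varphi}^*(\eta_n)$, $\eta_n\in\interior\limitcone_{\bs{\varphi}}^*$. Then $\eta_n(\bs{\varphi}(\v_n))=\psi_{\v_n}(\v_n)=\psi_\Ga(\v_n)$ is bounded (above by upper semicontinuity of $\psi_\Ga$ on a compact subset of $\limitcone$, below by $0$), and since $\bs{r}$ pairs strictly positively with every nonzero element of $\limitcone_{\bs{\varphi}}^*$, this forces $(\eta_n)$ bounded; passing to a further subsequence $\eta_n\to\eta$, so $\psi_{\v_n}\to\psi:=\bs{\varphi}^*(\eta)$. Taking limits, $\psi\ge\psi_\Ga$ and $\psi(\v)=\lim\psi_\Ga(\v_n)\le\psi_\Ga(\v)$ by upper semicontinuity, hence $\psi(\v)=\psi_\Ga(\v)$: $\psi$ is tangent to $\psi_\Ga$ at $\v$. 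By vertical tangency (\cref{thm:Anosov}(1)), $\v\in\interior\limitcone$; by uniqueness of the tangent form there (\cref{thm:Anosov}(2), using analyticity of $\psi_\Ga$ on $\interior\limitcone$), $\psi=\psi_\v\in\interior\limitcone^*$, so $\eta\in\interior\limitcone_{\bs{\varphi}}^*$ and $\ker\bs{\varphi}<\ker\psi_\v$. Therefore $\v\in\sf{K}_{\bs{\varphi}}$ with $\bs{\varphi}(\v)=\bs{r}$, i.e. $\bs{r}\in U$. This gives the homeomorphism, and the displayed ``in particular'' is just the combination of injectivity and surjectivity.

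I expect the closedness step to be the crux. It is the only place where properness of $\bs{\varphi}|_{\limitcone}$ (to get compactness and boundedness of $(\v_n)$ and $(\eta_n)$), the fact that $\psi_\Ga$ is merely upper semicontinuous up to $\partial\limitcone$, and the vertical tangency of $\psi_\Ga$ (which is what keeps the limiting vector $\v$ off $\partial\limitcone$, where $\psi_\v$ is not even defined) must all be used together; obtaining the tangent form at $\v$ as a limit of the tangent forms $\psi_{\v_n}$ at interior points is the device that makes this work. The rest — the identification of $\sf{K}_{\bs{\varphi}}$ as a $d$-manifold and the injectivity — is routine once \cref{thm:Anosov} is granted.
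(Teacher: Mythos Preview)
Your proof is correct and follows essentially the same architecture as the paper's: identify $\sf{K}_{\bs{\varphi}}$ with a $d$-dimensional open convex cone via the homeomorphism of \cref{thm:Anosov}(2), get injectivity from strict concavity, openness from invariance of domain, and surjectivity from closedness via vertical tangency. Your execution is in fact slightly cleaner in two places: for injectivity you use the midpoint-maximization argument rather than the paper's pair of strict inequalities, and for closedness you bound the dual sequence $(\eta_n)$ directly using that $\bs{r}\in\interior\limitcone_{\bs{\varphi}}$ pairs positively with $\limitcone_{\bs{\varphi}}^*\setminus\{0\}$, thereby avoiding the paper's case split into bounded versus unbounded $\psi_{\v_i}$.
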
 
	
	\begin{proof} 
		To show the injectivity,  let  $\v_1, \v_2\in \sf{K}_{\bs{\varphi}}$.
		Suppose that $\v_1\ne  \v_2$  and  $\bp (\v_1)=\bp  (\v_2) $.  Then 
		$$\v_1- \v_2 \in \ker\bs{\varphi} <  \ker\psi_{\v_1} \cap \ker\psi_{\v_2}.$$
		That is, $\psi_{\v_i}(\v_1) = \psi_{\v_i}(\v_2)$ for $i=1,2$.
		First note that $\br \v_1 \ne \br \v_2$; otherwise $\v_1=t\v_2$ for some $t\ne 0$ and $\bp(\v_1)=\bp(\v_2)$ implies $t=1$ which contradicts $ \v_1\ne {\v_2}$. 
		By the strict concavity of $\psi_\Ga$ (\cref{thm:Anosov}), the linear form $\psi_{\v_2}$ is tangent to $\psi_\Ga$ only in the direction $\br \v_2\ne \br \v_1$. Therefore
		\begin{equation}
			\psi_{\v_2}(\v_1) > \psi_{\Gamma}(\v_1)   =\psi_{\v_1}(\v_1) = \psi_{\v_1}(\v_2).
		\end{equation} 
		A symmetric argument gives $\psi_{\v_1}(\v_2) > \psi_{\v_2}(\v_1)$ which yields a contradiction. This proves the injectivity. 
		
		By Theorem \ref{bij}(2),
	$\sf{K}_{\bs{\varphi}} $
    is homeomorphic to the subset
    $$\{\phi\in \inte\L^*: \ker\bs{\varphi} <\ker \phi\} =U\cap \inte \L^* $$
where $U= \{\phi\in \fa^*: \ker\bs{\varphi} <\ker \phi\}$, which is a $d$-dimensional subspace of $\LieA^*$.
Since $\ker\bs{\varphi}\cap \L=\{0\}$, there exists a linear form on $\LieA$ whose kernel contains $\ker\bs{\varphi}$ and is positive on $\L-\{0\}$. Therefore  ${U}\cap \interior\L^*\ne \emptyset$.  Since $\inte \L^*$ is an open convex cone of $\fa^*$, it follows that $ U\cap \inte \L^* $ is a non-empty open convex cone in ${U}$. Hence $\sf{K}_{\bs{\varphi}}$ is homeomorphic to $\br^d$.

Since $\bp$ is a continuous injective map on $\sf{K}_{\bs{\varphi}}$ and $\interior\limitcone_{\bs{\varphi}}$ is homeomorphic to $\br^d$, it is an open map by the invariance of domain theorem. In particular, the image $\bp (\sf{K}_{\bs{\varphi}})  $ is a non-empty open  subset of $\inte \L_{\bp}$.
		Since $\inte \L_{\bp}$ is connected,  surjectivity follows if we show that  $\bp (\sf{K}_{\bs{\varphi}})$
		is a closed subset of $\inte \L_{\bp}$.  Suppose that $\bp (\sf{K}_{\bs{\varphi}})$
		is not closed in  $\inte \L_{\bp}$. Then
		there exists a sequence $t_i>0$, unit vectors $\v_i\in \inte \L$ and $\w\in \inte \L-
		(\sf{K}_{\bs{\varphi}} +\ker \bp) $ such that
		$t_i \v_i\in \sf{K}_{\bs{\varphi}}$ and  $\bp(t_i\v_i)\to \bp(\w)\in \inte \L_{\bp} $ as $i\to \infty$. 
		Since $\v_i$ are unit vectors, we may assume, by passing to a subsequence, that
		$\v_i$ converges to some unit vector $\v\in \L$ and hence $t_i$ converges to some $t>0$. So $\bp (t\v)=\bp (\w)$.
		
        First consider the case when the sequence $\psi_{\v_i}\in \inte\L^*$ is bounded, and hence converges to some linear form $\psi\in \L^*$, by passing to a subsequence.
		It follows that $\psi$ is tangent to $\psi_\Ga$ at $\v\in \L$. 
		By the vertical tangency of $\psi_\Ga$ (\cref{thm:Anosov}), it follows that $\v\in \inte \L$.
		Since $\psi_{\v_i}\to \psi$ and $\ker\bs{\varphi} < \ker\psi_{\v_i}$, we have $\ker\bp < \ker\psi$ and
		hence $\v\in \sf{K}_{\bs{\varphi}}$. This is a contradiction since $\w\notin\sf{K}_{\bs{\varphi}} +\ker \bp$. Now suppose that the sequence $\psi_{\v_i}\in \inte\L^*$ is unbounded. Then
		by passing to a subsequence, we have a sequence $s_i\to 0$ such that
		$s_i\psi_{\v_i}$ converges to some linear form $\psi\in \L^*$. Since $\w\in \inte\L$,
		we have $\psi(\w)>0$.
		On the other hand, we have 
		$$ \psi(\v)=\lim_{i\to \infty} \psi_{\v_i}( s_i \v_i) = \lim_{i\to \infty} s_i \psi_\Ga
		( \v_i)=0$$  since $\psi_\Ga(\v_i)$ is bounded.
		Therefore $\psi(\w)=\psi(\v)=0$, yielding a contradiction.  This proves that $\bp (\sf{K}_{\bs{\varphi}})$
		is  closed in  $\inte \L_{\bp}$.
This  completes the proof.
	\end{proof}

	Here is another  characterization of the $(\bs{\varphi},\bs{r})$-critical vector of $\Gamma$:
	\begin{lemma}
		\label{lem:sup}
		For any $\bs{r}\in \interior\limitcone_{\bs{\varphi}}$, the $(\bs{\varphi},\bs{r})$-critical vector of $\Gamma$ is the unique vector $\v^\star\in \limitcone \cap \bp^{-1}(\bs{r})$ such that
		$$\psi_\Ga(\v^\star)= \sup_{\w  \in \limitcone, \, \bs{\varphi}(\w ) = \bs{r}}\psi_{\Gamma}(\w ).$$ 
	\end{lemma}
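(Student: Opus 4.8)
The assertion is that the $(\bs{\varphi},\bs{r})$-critical vector $\v^\star$ is exactly the point maximizing $\psi_\Gamma$ over the slice $\limitcone \cap \bs{\varphi}^{-1}(\bs{r})$. The plan is to use Lagrange-multiplier-type reasoning made rigorous by the strict concavity and analyticity of $\psi_\Gamma$ on $\interior\limitcone$ (\cref{thm:Anosov}) together with \cref{lem:CriticalVector}, which already gives existence and uniqueness of $\v^\star$. First I would observe that the feasible set $\limitcone\cap\bs{\varphi}^{-1}(\bs{r})$ is a compact convex set: it is convex as an intersection of the convex cone $\limitcone$ with an affine subspace, and it is compact because $\bs{\varphi}|_{\limitcone}$ is proper, so $\bs{\varphi}^{-1}(\bs{r})\cap\limitcone$ is a bounded closed set. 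Since $\psi_\Gamma$ is upper semicontinuous on $\fa^+$ and the feasible set is compact, the supremum is attained at some $\w_0$; and since $\bs{r}\in\interior\limitcone_{\bs{\varphi}}=\bs{\varphi}(\interior\limitcone)$ by \cref{lem:ProjectedCone}, the feasible set meets $\interior\limitcone$, where $\psi_\Gamma>0$, so $\psi_\Gamma(\w_0)>0$ and hence $\w_0\neq 0$; moreover any maximizer must lie in $\interior\limitcone$ rather than on $\p\limitcone$, because $\psi_\Gamma$ on a boundary ray is dominated by its values on the interior (strict concavity off rays plus positivity on $\interior\limitcone$), or more directly because a linear form tangent to $\psi_\Gamma$ at a maximizer cannot be tangent at a boundary point by vertical tangency — I will pick whichever of these is cleanest, probably just: the feasible set is the convex hull of its interior points, and $\psi_\Gamma$ restricted to this affine slice is concave and not constant, so its max is in the relative interior, which lies in $\interior\limitcone$.

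Next I would show that any maximizer $\w_0\in\interior\limitcone$ of $\psi_\Gamma$ on the slice is a $(\bs{\varphi},\bs{r})$-critical vector, i.e.\ $\ker\bs{\varphi}\subset\ker\psi_{\w_0}$. Since $\psi_\Gamma$ is analytic on $\interior\limitcone$ and $\w_0$ is an interior maximizer of $\psi_\Gamma$ along the affine subspace $\w_0+\ker\bs{\varphi}$, the directional derivative $d(\psi_\Gamma)_{\w_0}$ vanishes on $\ker\bs{\varphi}$. It remains to identify $d(\psi_\Gamma)_{\w_0}$ with $\psi_{\w_0}$. This is a standard fact about the supporting linear form of a concave positively-homogeneous-degree-one function: for $\psi_\Gamma$ which is $1$-homogeneous and concave on the cone $\interior\limitcone$, the unique tangent linear form $\psi_{\w_0}$ (in the sense $\psi_\Gamma\le\psi_{\w_0}$, $\psi_\Gamma(\w_0)=\psi_{\w_0}(\w_0)$) coincides with the differential $d(\psi_\Gamma)_{\w_0}$; indeed $t\mapsto\psi_\Gamma(\w_0+t\u)\le\psi_{\w_0}(\w_0+t\u)=\psi_\Gamma(\w_0)+t\psi_{\w_0}(\u)$ with equality at $t=0$ forces $\tfrac{d}{dt}\big|_{0}\psi_\Gamma(\w_0+t\u)=\psi_{\w_0}(\u)$ whenever $\psi_\Gamma$ is differentiable at $\w_0$, which it is by analyticity. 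Hence $d(\psi_\Gamma)_{\w_0}=\psi_{\w_0}$ on all of $\fa$, so the vanishing on $\ker\bs{\varphi}$ says precisely $\ker\bs{\varphi}\subset\ker\psi_{\w_0}$, i.e.\ $\w_0\in\sf{K}_{\bs{\varphi}}$ with $\bs{\varphi}(\w_0)=\bs{r}$. By \cref{lem:CriticalVector} (equivalently \cref{lem:Bijection}), this forces $\w_0=\v^\star$. In particular the maximizer is unique and equals $\v^\star$, which is the content of the lemma.

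Finally, I should check the converse direction is also covered, namely that $\v^\star$ itself achieves the supremum — but this is automatic: we have just shown every maximizer equals $\v^\star$, and a maximizer exists, so $\psi_\Gamma(\v^\star)=\sup_{\w\in\limitcone,\ \bs{\varphi}(\w)=\bs{r}}\psi_\Gamma(\w)$. The main obstacle, such as it is, is the clean identification of the tangent form $\psi_{\w_0}$ with the differential of $\psi_\Gamma$ at an interior maximizer; everything else (compactness of the slice, attainment, the maximizer lying in $\interior\limitcone$) is routine given \cref{thm:Anosov} and \cref{lem:ProjectedCone}. One subtlety to handle carefully is ruling out a maximizer on $\p\limitcone$: I will lean on the vertical tangency statement in \cref{thm:Anosov}(1) — if a linear form supporting the concave function $\psi_\Gamma|_{\text{slice}}$ at a boundary maximizer $\w_0$ existed, it would extend (using $\bs{r}\in\interior\limitcone_{\bs{\varphi}}$, so the slice has nonempty relative interior inside $\interior\limitcone$) to a linear form tangent to $\psi_\Gamma$ at $\w_0\in\p\limitcone$, contradicting vertical tangency.
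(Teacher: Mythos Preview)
Your approach is correct in spirit but takes a genuinely different---and considerably longer---route than the paper. You argue by first producing a maximizer $\w_0$ on the compact convex slice, pushing it into $\interior\limitcone$, and then reading off the first-order (Lagrange) condition $d(\psi_\Gamma)_{\w_0}|_{\ker\bs{\varphi}}=0$, which together with the identification $d(\psi_\Gamma)_{\w_0}=\psi_{\w_0}$ forces $\w_0\in\sf{K}_{\bs{\varphi}}$ and hence $\w_0=\v^\star$ by \cref{lem:Bijection}. The paper instead goes in the opposite direction: starting from the already-existing $\v^\star$, it observes that for any $\w\ne\v^\star$ on the slice one has $\w-\v^\star\in\ker\bs{\varphi}\subset\ker\psi_{\v^\star}$, so $\psi_{\v^\star}(\w)=\psi_{\v^\star}(\v^\star)=\psi_\Gamma(\v^\star)$, while strict concavity gives $\psi_{\v^\star}(\w)>\psi_\Gamma(\w)$; chaining these yields $\psi_\Gamma(\v^\star)>\psi_\Gamma(\w)$ in three lines. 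This bypasses entirely the compactness, attainment, and boundary-exclusion steps you need.

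One point in your argument deserves tightening: the exclusion of a boundary maximizer $\w_0\in\partial\limitcone$. Vertical tangency says no linear form on all of $\LieA$ is tangent to $\psi_\Gamma$ at a boundary point, but a supporting affine functional for $\psi_\Gamma|_{\text{slice}}$ lives only on the affine slice, and your claim that it ``would extend'' to a tangent form on $\LieA$ is not justified as stated. The cleanest fix is in fact to import the paper's inequality: since $\v^\star\in\interior\limitcone$ is on the slice and $\psi_{\v^\star}$ is constant along the slice, $\psi_\Gamma(\w_0)\le\psi_{\v^\star}(\w_0)=\psi_\Gamma(\v^\star)$, with strict inequality unless $\w_0\in\R\v^\star$---which is impossible for $\w_0\ne\v^\star$ because $\bs{\varphi}|_{\limitcone}$ is proper. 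That one observation both rules out boundary maximizers and proves the lemma directly, which is why the paper's route is shorter.
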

	\begin{proof}	Let $\v^\star$ be the $(\bs{\varphi},\bs{r})$-critical direction of $\Gamma$ as in \cref{lem:Bijection}. Suppose that there exists $\w  \in \limitcone$, $\w  \ne \v^\star$ such that $\bs{\varphi}(\w ) = \bs{r}$ and $\psi_\Gamma(\w ) \ge \psi_{\Gamma}(\v^\star)$. Since $\bs{\varphi}(\w ) = \bs{\varphi}(\v^\star)$, we have $\w  - \v^\star \in \ker\bs{\varphi} < \ker\psi_{\v^\star}$. By Theorem \ref{bij}(2),  we have $$\psi_{\v^\star} (\w ) > \psi_\Gamma(\w ) \ge \psi_{\Gamma}(\v^\star) = \psi_{\v^\star}(\v^\star) = \psi_{\v^\star}(\w )$$		which is a contradiction. Hence $\psi_\Ga(\v^\star)> \psi_\Ga(\w )$, proving the claim.\end{proof}
	
	\section{Hypertubes and multiple correlations}
	\label{sec:Hypertubes}
	Let $\Ga$ be a Zariski dense Anosov subgroup of a connected semisimple real algebraic group $G$. In our previous paper \cite{CO23}, we obtained {multiple} correlations for convex cocompact representations by counting Jordan and Cartan projections of $\Gamma$  lying in {\it tubes} of $\fa$. By a tube, we mean a subset of the form
	$\sQ +V$ where $V$ is a {\it line} in $\fa$ and $\sQ$ is a compact subset of a complementary\footnote{Two linear subspaces $V_1$ and $V_2$ of a vector space $V_0$ are complementary if $V_0 = V_1 \oplus V_2$.} subspace to $V$.
	
	In this section, we rephrase studying multiple correlations of the linearized Jordan (resp. Cartan) spectra of Anosov representations in terms of counting Jordan  (resp. Cartan) projections of $\Gamma$ that lie in what we call {\it{hypertubes}}.
	Hypertubes are generalizations of tubes; they are of the form
	$\sQ+V$ where $V$ is {\it any linear subspace} of $\fa$ which is not necessarily one dimensional and $\sQ$ is a compact subset of a complementary subspace  to $V$ in $\fa$. While there is a unique way to truncate a tube as the only way to go to infinity is along the line $V$, the choice of truncation of a hypertube has to be made carefully in order to be able to obtain desired counting results.
	
	\subsection*{Hypertubes} 
	\begin{definition}
		\label{def:Hypertube}
		A {\it hypertube} of the limit cone $\L$ is of the form
		$$\T=\T (\sQ ,V,\cal{C}) =(\sQ  + V) \cap\cal{C}$$
		where
		\begin{itemize}
			\item  $V$ is a non-zero linear subspace of $\fa$;
			\item $\sQ$ is a compact subset of a complementary subspace 
			to $V$ such that $\sQ$ has non-empty relative interior and has Lebesgue null boundary;
			\item   $\cal{C} \subset \interior\LieA^+$ is a closed convex cone with nonempty interior such that $\cal{C} \cap V \cap \interior\limitcone\ne \emptyset$. 
		\end{itemize}
	\end{definition}
	
	When $\dim V = 1$, hypertubes are called tubes.  
	
	\subsection*{Truncations of a hypertube} 
	Consider a hypertube 
	$$\T=\T(\sQ ,V,\cal{C}).$$
	Fix a vector $$ \v \in \cal{C} \cap V \cap  \interior\limitcone\;\; \text{ such that $\sQ\subset \ker\psi_\v$};$$ recall that	$\psi_\v\in \fa^*$ is the tangent form given in \eqref{pv}. 
	Noting that $\T= (\sQ +(V\cap \ker \psi_\v)+\br \v)\cap \cal C$, we  define $\v$-truncations of $\T$ as follows: for any $T>0$,
	$$\T_{T}(\v) = \left(\sQ + (V\cap \ker \psi_\v) +[0, T]\v\right)\cap\cal C.$$

	More generally, for any continuous function $b \in C(\sQ )$, 
	we consider the following family of $\v$-truncations of $\T$: for any $T >0$,
	\be\label{et} \T_{T,b}(\v) = \{\sf{q}  + \v' + t\v  \in \cal C: \sf{q}  \in \sQ , \, \sf{v}' \in V\cap \ker\psi_\v, \, 0 \le t \le T +b({\sf{q}})\};\ee 
	hence $\T_{T, 0}(\v) =\T_T(\v)$.
	
	\subsection*{Relation between multiple correlations and counting in hypertubes}
	We now explain the relationship between multiple correlations of Jordan (resp. Cartan) projections for Anosov representations and counting Jordan (resp. Cartan) projections in hypertubes of a single Anosov subgroup. 
	Let $\Sigma$ be a finitely generated group.
	Let $\bs{\rho}=(\rho_i:\Sigma \to G_i)_{1\le i \le d}$ be a $d$-tuple of Anosov representations such that $\bs{\rho}(\Sigma)$ is Zariski dense in the product $\prod_{i=1}^d G_i$ and $(\psi_i:\LieA_i \to \R)_{1 \le i \le d}$ be a $d$-tuple of linear forms such that $\psi_i>0$ on $\limitcone_{\rho_i(\Sigma)} -\{0\}$ for each $1\le i\le d$. Then in particular, $\G=\bs{\rho}(\Sigma)$ is a Zariski dense Anosov subgroup of $G=\prod_{i=1}^d G_i$,
    $\bs{\varphi} = (\varphi_1,\dots,\varphi_d):\fa \to \R^d $
	is a surjective  linear map
    and $\bs{\varphi}|_{\limitcone_{\bs{\rho}(\Sigma)}}$ is a proper map. 
	
	Indeed we now describe a more general situation 
	for any Zariski dense Anosov subgroup $\Ga$ of a connected semisimple real algebraic group $G$. Let $d\ge 2$ and $\varphi_1,\dots,\varphi_d $ be linear forms on $\fa$ so that the linear map
	$$\bs{\varphi} = (\varphi_1,\dots,\varphi_d):\fa \to \R^d $$
	is surjective and $\bs{\varphi}|_{\limitcone}$ is proper where $\L=\L_\Ga$ is the limit cone of $\Ga$. By \cref{lem:ProjectedCone}, $\limitcone_{\bs{\varphi}} = \bs{\varphi}(\limitcone)$ is a closed convex cone with nonempty interior.
	Fix $\bs{r} \in \interior\limitcone_{\bs{\varphi}}$ and $\e_1,\dots,\e_d > 0$. We are interested in finding asymptotics for
	$$\#\{[\gamma] \in [\Gamma] : \, \lambda(\gamma) \in \bv^{-1}(\prod_{i=1}^d[r_iT,  r_iT+\e_i])\}
	\quad\text{ and }$$
	$$\#\{\gamma \in \Gamma : \, \mu(\gamma) \in \bv^{-1}(\prod_{i=1}^d[r_iT,  r_iT+\e_i])\}.$$
	That is, we wish to count the Jordan and Cartan projections of $\Gamma$ that lie in the set $ \bs{\varphi}^{-1}(\prod_{i=1}^{d}[r_iT,r_iT+\e_i])$.

	Using the $(\bs{\varphi},\bs{r})$-critical vector of $\Gamma$,  we are able to describe the shape of this set in terms of truncations of a hypertube; this description is crucial for the anaylsis of this paper and is the content of the following lemma.
	
	$$\text{Let $\v^{\star} \in \interior\limitcone$ be the $(\bs{\varphi},\bs{r})$-critical vector of $\Gamma$};$$ that is,
	$\bs{\varphi}(\v^\star) = \bs{r}$ and $\ker\bs{\varphi^\star} $ is a subspace of $ \ker\psi_\v$. Existence and uniqueness of $\v^{\star}$  was proved in \cref{lem:CriticalVector}.
	
	\begin{lemma} [Basic Lemma] \label{lem:Hypertube}\label{lem:Truncations}  
		Let $\cal C$ be a closed convex cone of $\interior\LieA^+$  such that $\limitcone\subset \inte \cal{C}$. \begin{enumerate}
			\item  The following set  is a hypertube of $\limitcone$:
			$$\T=\bigcup_{T>0} 
			\{\u  \in \cal{C}: \bs{\varphi}(\u) \in \prod_{i=1}^{d}[r_iT,r_iT+\e_i]\} .$$ 
			More precisely, if  $W < \ker\psi_{\v^{\star}}$ is a complementary  subspace to $\ker\bs{\varphi}$ and   $ \sQ =W\cap \bp^{-1}(\prod_{i=1}^{d}[0,\e_i]\})$, then 
			$$\T = \T(\sQ ,\R\v^{\star}\oplus\ker\bs{\varphi},\cal{C}) = (\sQ  + (\R\v^{\star}\oplus\ker\bs{\varphi})) \cap \cal{C}. $$
			\item   There exist functions $b_1,b_2 \in C(\sQ )$ such that for all $T > 0$, we have
			$$\cal{C}\cap\bs{\varphi}^{-1}(\prod_{i=1}^{d}[r_iT,r_iT+\e_i]) = \T_{T,b_1}(\v^{\star}) - \T_{T,b_2}(\v^{\star}).$$
		\end{enumerate}
	\end{lemma}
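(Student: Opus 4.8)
The plan is to prove (1) and (2) in sequence, using the critical vector $\v^\star$ from \cref{lem:CriticalVector} as the backbone for the coordinate system on $\fa$.

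For part (1), first I would verify the decomposition $\fa = \sQ\text{-span}\oplus\R\v^\star\oplus\ker\bs{\varphi}$ makes sense, i.e. that $W\oplus\R\v^\star\oplus\ker\bs{\varphi}=\fa$ where $W$ is a complement to $\ker\bs{\varphi}$ inside $\ker\psi_{\v^\star}$. This follows because $\ker\psi_{\v^\star}$ has codimension one, $\v^\star\notin\ker\psi_{\v^\star}$ (since $\psi_{\v^\star}(\v^\star)=\psi_\Ga(\v^\star)>0$ by positivity of $\psi_\Ga$ on $\inte\L$), and $\dim W=d-1$ so that $\dim(W\oplus\R\v^\star)=d$ complements $\ker\bs{\varphi}$ which has dimension $\rankG-d$. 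Next, since $\bs{\varphi}$ vanishes on $\ker\bs{\varphi}$ and is injective on $W\oplus\R\v^\star$, the map $\bs{\varphi}$ restricted to $W\oplus\R\v^\star$ is a linear isomorphism onto $\R^d$ carrying $\v^\star$ to $\bs{r}$. Then I would show directly that $\bs{\varphi}(\u)\in\prod_i[r_iT,r_iT+\e_i]$ for some $T>0$ if and only if $\u\in\sQ+\R\v^\star+\ker\bs{\varphi}$: writing $\u=\sf{q}+s\v^\star+\sf{k}$ uniquely with $\sf{q}\in W$, $\sf{k}\in\ker\bs{\varphi}$, we have $\bs{\varphi}(\u)=\bs{\varphi}(\sf{q})+s\bs{r}$, and $\bs{\varphi}(\sf q)+s\bs r\in\prod_i[r_iT,r_iT+\e_i]$ with $T=s>0$ exactly when $\bs{\varphi}(\sf q)\in\prod_i[0,\e_i]$, i.e. $\sf q\in\sQ$. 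Intersecting with $\cal C$ gives the union equals $(\sQ+(\R\v^\star\oplus\ker\bs{\varphi}))\cap\cal C=\T(\sQ,\R\v^\star\oplus\ker\bs{\varphi},\cal C)$. One must also check the requirements of \cref{def:Hypertube}: $\sQ$ is compact with nonempty relative interior and null boundary in $W$ (since it is the preimage of a box under a linear iso), and $\cal C\cap(\R\v^\star\oplus\ker\bs{\varphi})\cap\inte\L\ni\v^\star$ is nonempty because $\v^\star\in\inte\L\subset\inte\cal C$ and $\v^\star$ lies in the subspace.

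For part (2), the key observation is that the box $\prod_i[r_iT,r_iT+\e_i]$ is the ``vertical slab'' sitting over the point $T\bs r$ in the $\v^\star$-direction, but truncations $\T_{T,b}(\v^\star)$ cut along $\ker\psi_{\v^\star}\cap V$ hyperplanes, not along $\bs{\varphi}$-level sets. Writing $V=\R\v^\star\oplus\ker\bs{\varphi}$, a point $\sf q+s\v^\star+\sf k$ (with $\sf q\in\sQ$, $\sf k\in\ker\bs{\varphi}$, so that $\sf q+\sf k$ contributes the $V\cap\ker\psi_{\v^\star}$-part only after further splitting $\sf k$ — actually $\sf q\in W\subset\ker\psi_{\v^\star}$ and we need $\sf k\in\ker\bs{\varphi}\subset\ker\psi_{\v^\star}$, which holds precisely by the critical vector property $\ker\bs{\varphi}<\ker\psi_{\v^\star}$) lies in $\T_{T,b}(\v^\star)$ iff $0\le s\le T+b(\sf q)$. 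So I need continuous $b_1,b_2$ on $\sQ$ with: $\sf q+s\v^\star+\sf k\in\cal C$ and $\bs{\varphi}$-value in the box $\iff$ ($0\le s\le T+b_1(\sf q)$) but not ($0\le s\le T+b_2(\sf q)$). From part (1)'s computation, membership in $\cal C\cap\bs{\varphi}^{-1}(\text{box}_T)$ forces $\sf q\in\sQ$ and $s$ in an interval determined by matching $\bs{\varphi}(\sf q)+s\bs r$ to $\prod_i[r_iT,r_iT+\e_i]$; solving the $d$ inequalities $r_iT\le\varphi_i(\sf q)+sr_i\le r_iT+\e_i$ gives $s\in[T+\max_i(-\varphi_i(\sf q)/r_i),\,T+\min_i((\e_i-\varphi_i(\sf q))/r_i)]$, so I set $b_2(\sf q)=\max_i(-\varphi_i(\sf q)/r_i)$ and $b_1(\sf q)=\min_i((\e_i-\varphi_i(\sf q))/r_i)$, both continuous (indeed piecewise-linear) on $\sQ$; note $r_i>0$ for all $i$ since $\bs r\in\inte\L_{\bs{\varphi}}$. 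The set-difference identity $\T_{T,b_1}(\v^\star)-\T_{T,b_2}(\v^\star)$ then records exactly ``$s\le T+b_1(\sf q)$ and $s>T+b_2(\sf q)$'', and one checks the lower constraint $s\ge 0$ is automatically implied on the relevant region (or absorbed), and that intersecting with $\cal C$ is consistent on both sides because $\cal C$ is a cone containing the box region by the hypothesis $\L\subset\inte\cal C$.

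The main obstacle I expect is part (2): one must be careful that the two truncations $\T_{T,b_i}(\v^\star)$ are both genuinely of the form \eqref{et} — in particular that the definition in \eqref{et} already incorporates the $0\le t$ constraint and the intersection with $\cal C$, and that the boxes sit entirely inside $\cal C$ for large $T$ so no spurious boundary effects from $\cal C$ enter; a short argument using $\L\subset\inte\cal C$ and the fact that $\bs{\varphi}^{-1}(\text{box}_T)$ concentrates near $\R_{>0}\v^\star\subset\inte\L$ as $T\to\infty$ handles this, but it needs to be stated for \emph{all} $T>0$ as the lemma claims, which may require slightly enlarging $\cal C$ or observing that points of the box outside $\cal C$ contribute to neither side of the difference. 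I would also double-check that $W$ can always be chosen (it can, as a complement to a subspace of $\ker\psi_{\v^\star}$ inside $\ker\psi_{\v^\star}$) and that the formula for $b_1,b_2$ does not depend on that choice in an essential way, since $\sf q$ ranges over $\sQ\subset W$.
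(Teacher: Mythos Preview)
Your plan for part (1) is essentially the paper's argument: decompose $\fa = W \oplus \R\v^\star \oplus \ker\bs{\varphi}$, compute $\bs{\varphi}(\u)$ in these coordinates, and identify the union with the hypertube. You are in fact more careful than the paper in justifying the direct sum decomposition.

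For part (2) your route diverges from the paper's. The paper defines the parallelepiped $B = \{\u \in \R\v^\star \oplus W : \bs{\varphi}(\u) \in \prod_i[0,\e_i]\}$ and observes that for each $\sf q \in \sQ$ the set $\{t:\sf q + t\v^\star \in B\}$ is an interval $[b_2(\sf q), b_1(\sf q)]$, simply because a line meets a convex body in a segment; continuity of $b_1,b_2$ follows. You instead solve the $d$ linear inequalities explicitly and write down closed formulas $b_1(\sf q)=\min_i(\e_i-\varphi_i(\sf q))/r_i$ and $b_2(\sf q)=\max_i(-\varphi_i(\sf q))/r_i$. Your computation is correct when every $r_i>0$, but your justification ``$r_i>0$ since $\bs r\in\inte\L_{\bs{\varphi}}$'' is false in the general setting of Section~4: nothing forces $\L_{\bs{\varphi}}$ to lie in the positive orthant, and indeed \cref{thm:Hyperbolic} applies the lemma with $\bs r=(0,1)$. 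When some $r_i=0$ or $r_i<0$ your formulas need case-splitting (dropping the $i$th constraint on $s$, or flipping inequalities), whereas the paper's convexity argument handles all cases at once. This is a genuine but minor gap, easily repaired either by the case analysis or by adopting the paper's geometric argument.

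Your caveats at the end about the lower bound $s\ge 0$ and the role of $\cal C$ are well-placed; the paper's proof is equally informal on these points, and for the counting application they are harmless since one ultimately intersects with $\L$, where $\psi_{\v^\star}>0$ forces the $\v^\star$-coefficient to be positive.
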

	\begin{proof}
		Fix a subspace $W < \ker\psi_{\v^{\star}}$ complementary to $\ker\bs{\varphi}$; this exists since $\ker\bs{\varphi} $ is a subspace of $ \ker \psi_{\v^{\star}}$ of codimension $d-1$.
		Since $W \cap \ker\bs{\varphi} = \{0\}$,   $\bs{\varphi}|_{W}$ is a proper map.  Therefore
		$ \sQ =W\cap \bp^{-1}(\prod_{i=1}^{d}[0,\e_i]\})$ is a compact subset. It is clear that $\sQ$ has Lebesgue null boundary and $\sQ$ has nonempty interior. To see that $\T$ is a hypertube of $\limitcone$, observe that
		\begin{align*}
			\T & = \bigcup_{T>0}\{\u  \in \cal{C}: \bs{\varphi}(\u) \in \prod_{i=1}^{d}[r_iT,r_iT+\e_i]\}
			\\
			& = \bigcup_{T>0}\{\w  + \sf{v}' + T\v^{\star} \in \cal{C}: \bs{\varphi}(\w) \in \prod_{i=1}^{d}[0,\e_i], \w  \in W, \sf{v}' \in \ker\bs{\varphi}\}	 	
			\\
			& = (\{\w \in W: \bs{\varphi}(\w) \in \prod_{i=1}^{d}[0,\e_i]\}+ (\R\v^{\star}\oplus\ker\bs{\varphi})) \cap \cal{C}
			\\
			& = (\sQ + (\R\v^{\star}\oplus\ker\bs{\varphi})) \cap \cal{C}.
		\end{align*}
		To recover the set $\cal{C}\cap\bs{\varphi}^{-1}(\prod_{i=1}^{d}[r_iT,r_iT+\e_i])$ as a difference of $\v^{\star}$-truncations, consider the $d$-dimensional parallelepiped
		$$B = \{\u  \in \R\v^{\star}\oplus W: \bs{\varphi}(\u) \in \prod_{i=1}^d[0,\e_i]\}.$$
		Observe that for all $\sf{q}  \in \sQ $, the set $\{t \in \R: \sf{q}  + t\v^{\star} \in B\} \subset \R$ is an interval $[b_2(\sf{q} ), b_1(\sf{q} )]$ since any line in $\R\v^{\star}\oplus W$ that intersects $B$ does so in a single segment (or possibly a single point).  This gives us continuous functions $b_1$ and $b_2$ on $\sQ$. Hence 
		\begin{align*}
			& \T_{T,b_1}(\v^{\star}) - \T_{T,b_2}(\v^{\star}) 
			\\
			& = \{\sf{q}  + \v' + t\v^{\star}  \in \T: \sf{q}  \in \sQ , \, \sf{v}' \in \ker\bs{\varphi}, \, T + b_2(\sf{q} ) \le t \le T +b_1({\sf{q} })\} 
			\\
			& = ((T\v^{\star}+ B) +\ker\bs{\varphi})\cap\cal{C}
			\\ 
			& = \{\u \in\cal{C}:\bs{\varphi}(\u) \in \prod_{i=1}^{d}[r_iT,r_iT+\e_i]\},
		\end{align*} where $\bp(\v^{\star})=\bs{r}$ was used in the last identity.
	\end{proof}

	By \cref{lem:Truncations}, to prove \cref{thm:Correlations} it suffices to find asymptotics for counting Jordan and Cartan projections of $\Gamma$ that lie in $\v$-truncations of $\T$ which is the goal of the next section. 
	
	\section{Equidistribution with respect to hypertubes}
	\label{sec:EquidistributionStatements}
	Let $\Ga$ be a Zariski dense Anosov subgroup of a connected semisimple real algebraic group $G$.
	In our paper \cite{CO23}, we proved joint counting and equidistribution theorems of Jordan and Cartan projections of $\Ga$
	with respect to  tubes of the limit cone $\limitcone=\L_\Ga$. The goal of this section is to generalize these results to hypertubes. 
	
	For this entire section,  fix a hypertube
	$$\T=\T (\sQ_0 ,V,\cal{C}) =(\sQ_0  + V) \cap\cal{C}$$
	of $\L$, where $\sQ_0, V, $ and $\cal C$ are as in
	\cref{def:Hypertube}. 
	Fix a vector $$\v \in \cal{C} \cap V \cap  \interior\limitcone,$$ which exists by definition of a hypertube. 
	Let $ W< \ker\psi_\v$ be a complementary subspace to $V \cap \ker\psi_\v$.  We then have $\LieA = V \oplus W$ and $\ker \psi_\v= (V\cap \ker \psi_\v) \oplus W$. Let $\sQ \subset W$ be the image of $\sQ_0 $ under the projection
	$ V \oplus W\to W$. Then $\sQ$ spans $W$ and in particular, 
	$$\T = \T(\sQ,V,\cal{C}).$$

	Fix a function $b\in C(\sQ)$ and recall the associated $\v$-truncations of $\T$: \be\label{et2} \T_{T,b}(\v) = \{\sf{q}  + \v' + t\v  \in \T: \sf{q}  \in \sQ , \, \sf{v}' \in V\cap \ker\psi_\v, \, 0 \le t \le T +b({\sf{q}})\}.\ee 
	
	\begin{remark}
		In the setting of Basic \cref{lem:Hypertube}, $V = \br \v \oplus \ker\bp$, $\sQ= W\cap \bp^{-1}(\prod_{i=1}^{d}[0,\e_i])$ and $\v$ is the
		$(\bp, \bs{r})$-critical vector of $\Ga$. Therefore the counting and equidistribution statements proven for the truncations $\T_{T,b}$ in this section apply to $\T_{T,b_i}$ in \cref{lem:Hypertube}.
	\end{remark}
	
	\subsection*{Equidistribution of cylinders with respect to hypertubes}
	We will fist state a joint equidistribution theorem on nontrivial closed $AM$-orbits in $\Gamma \ba G$ and their holonomies whose periods are ordered by the $\v$-truncations $\T_{T,b}(\v)$ (\cref{thm:JointEquidistribution}). Recall that any element of $\Ga$ of infinite order is a loxodromic element by the Anosov property \cite[Corollary 3.2]{GW12}.
	Let $\primGamma$ denote the set of primitive loxodromic elements in $\Gamma$. 
	For each conjugacy class $[\gamma]\in [\primGamma]$, consider the closed $A$-orbit given by
	$$ C_{\gamma}=\Gamma\ba \Gamma gAM \subset \Ga\ba G/M $$ 
	where $g\in G$ is such that
	$ \gamma\in g (\inte A^+) Mg^{-1}$. This is well-defined independent of the choice of $g$ and the choice of $\ga$ in its conjugacy class.
	Each $C_\gamma$ is  homeomorphic to a cylinder $\S^1 \times \R^{\rankG -1}$  \cite[Lemma 4.14]{CF23}. 
	For a continuous function $f$ on $\Ga\ba G/M$, the integral $\int_{C_\gamma}f$  is computed with respect to the measure on $C_\gamma$ induced by the Haar measure of  $A$. 
	
	There exists a unique probability $(\Ga, \psi_\v)$-conformal measure, say, $\nu_\v$, on $\F$, that is,
	for any $\gamma \in \Gamma$ and $\xi \in \Fboundary$,
	$$\frac{d\gamma_*\nu_\v}{d\nu_\v}(\xi) = e^{\psi_\v(\beta_\xi(e,\gamma))}$$
	where $\gamma_*\nu(Q) = \nu(\gamma^{-1}Q)$ for any Borel subset $Q \subset \Fboundary$  and $\beta$ is the Busemann map defined in \eqref{buse} (\cite{LO20b} and \cite{LO22}); moreover $\nu_\v$ is supported on $\La$. 
	For all $g \in G$, let
	\be  \label{eqn:FurstenbergBoundaryNotation}
	g^+ = gP \in \Fboundary \quad \text{ and }\quad 
	g^- = gw_0P \in \Fboundary\ee 
	where $w_0$ is the longest Weyl element as in \eqref{woo}.
	There is a unique open $G$-orbit in $\Fboundary \times \Fboundary$ given by
	$		\Fboundary^{(2)} = G.(e^+,e^-) \subset \Fboundary\times\Fboundary$.
	The {Hopf parametrization} is a diffeomorphism $G/M \to \Fboundary^{(2)} \times \LieA$ defined by
	\begin{equation}\label{hopf}
		gM \mapsto (g^+, g^-, \w =\beta_{g^+}(e, g)) \quad\text{for all $g\in G$}.
	\end{equation}
	The associated BMS measure ${\BMS}$ on $G/M$
	is then given by $$d{\sf m}^{\op{BMS}}_\v(gM) = e^{\psi_\v \left(\beta_{g^+}(e,g)\right) + \psi_{\i(\v)} \left(\beta_{g^-}(e,g)\right)} \, d\nu_\v(g^+) \, d\nu_{\i(\v)}(g^-) \, d\w .$$
	This being left $\Ga$-invariant, it descends to an $A$-invariant measure  on $\Ga \ba G /M$, which we denote by the same notation $\BMS$ by abuse of notation (see \cite[Lemma 3.6]{ELO20} for details). Note that it is
    supported on the subset $\Ga\ba (\La^{(2)}\times \fa ) \subset \Ga \ba G /M$, where $\La^{(2)}=(\La\times \La ) \cap \F^{(2)}$.

Consider the $\Ga$-action on $\La^{(2)}\times \br$
given by $$\ga (\xi, \eta, t)= (\ga\xi, \ga \eta, t +\psi_\v (\beta_{\ga \xi}(o, \ga o))$$ for $\ga\in \Ga$
and $(\xi, \eta, t)\in \La^{(2)}\times \br$. This is a proper discontinuous and cocompact action
by \cite[Proposition 4.1]{BCLS} and \cite[Theorem 3.5]{CS23}.  Setting $\mathcal X_\v=\Ga\ba (\La^{(2)}\times \br)$, the product measure 
$$d{\tilde m}_{\mathcal X_\v}(\xi, \eta, t) = e^{\psi_\v \left(\beta_{\xi}(e,g)\right) + \psi_{\i(\v)} \left(\beta_{\eta}(e,g)\right)} \, d\nu_\v(\xi) \, d\nu_{\i(\v)}(\eta) \, dt $$
on $\La^{(2)}\times \br$,
where $g\in G$ is any element such that $g^+=\xi$ and $g^-=\eta$, induces a finite measure 
\begin{equation}
   \label{mcal} dm_{{\cal{X}_\v}} \end{equation}
on $\mathcal X_\v$
(see \cite[Theorem 4.8]{LO20b} for the statement and references).
The projection $\Ga \ba \La^{(2)}\times \fa \to \mathcal X_\v$ induced by
    $(\xi, \eta, \u)\mapsto (\xi, \eta, \psi_\v(\u))$
    is a principle $\ker \psi_\v$-bundle.
    The Bowen-Margulis-Sullivan measure $d\m_\v^{\op{BMS}}$  is a product
	\begin{equation}
		\label{eqn:BMSProduct}
		d\BMS = dm_{{\cal{X}_\v}} \, d\u 
	\end{equation}
	where  $d\u $ denotes the appropriately normalized Lebesgue measure on $\ker\psi_\v$ (\cite[Proposition 3.5]{Sam15}, \cite[Corollary 4.9]{LO20b}).
	
	Set 
	$$ \rank := \rankG,  \; d := \rank + 1 - \dim V \;\; \text{and } \;\; \delta_\v := \psi_\Gamma(\v).$$
	Let $C_{\mathrm{c}}(\Ga \ba G/M)$ denote the set of continuous compactly supported functions on $\Ga \ba G/M$ and $\mathrm{Cl}(M)$ denote the set of continuous class functions on $M$. We now state the equidistribution of closed cylinders  $C_\ga$ with the Jordan projection $\lambda(\gamma)$ restricted to the $\v$-truncations of $\T$:
	\begin{theorem}
		\label{thm:JointEquidistribution}
		For any $f \in C_{\mathrm{c}}(\Ga \ba G/M)$ and $\phi \in \mathrm{Cl}(M)$, we have as $T\to \infty$,
		$$\sum_{[\gamma] \in [\primGamma],\, \lambda(\gamma) \in \T_{T,b}(\v)} \int_{C_\ga} f \; \cdot \phi(m(\gamma)) \sim c\cdot \BMS(f)\cdot\int_{M_\Ga}\phi \, dm \cdot \frac{e^{\delta_\v T}}{T^{(d-1) /2}}$$
		where $dm$ denotes the Haar probability measure on $M$, and $c=c(\T,\v,b) >0$ is a constant defined in \eqref{eqn:c}.
	\end{theorem}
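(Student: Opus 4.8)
The plan is to reduce the hypertube equidistribution statement to the local mixing of the $AM$-action on $\Ga\ba G/M$ with respect to the BMS measure, exactly in the spirit of \cite{CO23}, the new ingredient being the bookkeeping needed to handle the extra $(\dim V - 1)$ directions of the hypertube. First I would recall the standard correspondence between closed $A$-orbits $C_\ga$ in $\Ga\ba G/M$ (equipped with holonomy in $M$) and conjugacy classes $[\ga]\in[\primGamma]$, and rewrite the left-hand side as an integral of $f\otimes\phi$ against a sum of $AM$-periodic orbit measures. The Jordan projection $\lambda(\ga)$ being restricted to $\T_{T,b}(\v)$ translates, under the Hopf parametrization \eqref{hopf}, into a constraint on the $\fa$-coordinate $\w$ of the orbit: decomposing $\w = \sf q + \sf v' + t\v$ with $\sf q\in\sQ$, $\sf v'\in V\cap\ker\psi_\v$, the period is measured by $t$, ranging over $[0,T+b(\sf q)]$, while $\sf q$ and $\sf v'$ range over a fixed compact set and over $V\cap\ker\psi_\v$ respectively.

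The core step is to integrate the local mixing asymptotic in the $A$-direction. Using \eqref{eqn:BMSProduct}, $d\BMS = dm_{\cal X_\v}\,d\u$ where $d\u$ is Lebesgue measure on $\ker\psi_\v$, and the $\v$-direction carries the exponential weight $e^{\delta_\v t}$ coming from the conformal density of exponent $\psi_\v$ and the fact that $\psi_\v(\v)=\psi_\Ga(\v)=\delta_\v$. The transverse integration over $\sf v'\in V\cap\ker\psi_\v$ is over a $(\dim V - 1)$-dimensional linear space, and over $\sf q\in\sQ$ it is over a $(\rank - \dim V)$-dimensional compact set; since the total flat factor $\ker\psi_\v$ has dimension $\rank - 1$, these two pieces together with $\v$ account for the full $\fa$. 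The key point — and the reason the power of $T$ is $T^{(d-1)/2}$ with $d = \rank + 1 - \dim V$ — is a Gaussian/local-limit phenomenon: projecting the $A$-action to the one-parameter flow along $\v$ loses the transverse $\ker\psi_\v$-directions, and the recurrence of the BMS-flow in those directions contributes a factor $\asymp T^{-(\dim\ker\psi_\v - \dim(V\cap\ker\psi_\v))/2} = T^{-(\rank-\dim V)/2}$ wait — one must be careful: the transverse directions that are \emph{not} integrated out by the hypertube are those in $W$, which has dimension $\rank - \dim V = d - 1$. Integrating the mixing asymptotic against the bounded function supported on $\sQ\subset W$ and applying a local limit theorem (as in \cite{CO23}, via the spectral gap / Markov coding for the Anosov subgroup) yields the $T^{-(d-1)/2}$ factor and the constant $\BMS(f)\cdot\int_{M_\Ga}\phi\,dm$, after normalizing by $\vol_{M_\Ga}$ and using that $M_\Ga$ has finite index in $M$ so that class functions integrate the same way up to the index.

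Concretely, the steps in order: (i) set up the orbital-counting-to-equidistribution dictionary and express the sum as $\sum_{[\ga]} \big(\int_{C_\ga} f\big)\phi(m(\ga)) \mathbbm 1_{\lambda(\ga)\in\T_{T,b}(\v)}$; (ii) invoke the joint equidistribution of closed $A$-orbits and holonomies from the rank-one-style argument of \cite{CO23} adapted here — this is where the conformal density $\nu_\v$, the BMS measure $\BMS$, and the product decomposition \eqref{eqn:BMSProduct} enter, together with the decay of matrix coefficients \cref{thm:DecayofMatrixCoefficients}; (iii) carry out the $\fa$-integration by slicing $\w = \sf q + \sf v' + t\v$, pulling out $e^{\delta_\v T}$ and the $d\u$-integral over $V\cap\ker\psi_\v$, and invoking the local limit theorem in the remaining $W$-directions to produce $T^{-(d-1)/2}$; (iv) identify the constant $c=c(\T,\v,b)$ by tracking the normalizations (Lebesgue normalization on $\ker\psi_\v$, the shape of $\sQ$ and the cutoff $b$, the Hessian of $\psi_\Ga$ at $\v$ entering the Gaussian), matching \eqref{eqn:c}. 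The main obstacle is step (iii): in \cite{CO23} the hypertube is a tube ($\dim V = 1$), so there is a single flow direction and the transverse space is all of $W$; here $V$ is higher-dimensional, so one must check that integrating the mixing asymptotic \emph{first} over the full affine family $\sf v'\in V\cap\ker\psi_\v$ (an unbounded integration, tamed by the finiteness of $dm_{\cal X_\v}$ in \eqref{mcal} and the properness of $\bs{\varphi}$ on $\L$) and \emph{then} applying the local limit theorem in $W$ is legitimate and uniform in the cutoff $b\in C(\sQ)$ — equivalently, that the error terms in the equidistribution are uniform over the relevant family of truncations. Establishing this uniformity, via an $\e$-approximation of $b$ by step functions and a sandwiching of $\T_{T,b}(\v)$ between finitely many cleaner regions, is the technical heart of the argument.
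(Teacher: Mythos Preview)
Your overall plan---reduce to local mixing, integrate over the hypertube slices, extract the exponential rate and the polynomial correction---is the right one, and matches the paper in spirit. But two points are off in ways that matter.

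First, the mechanism you invoke for the unbounded integration over $\v'\in V\cap\ker\psi_\v$ is wrong. You say it is ``tamed by the finiteness of $dm_{\cal X_\v}$ in \eqref{mcal} and the properness of $\bs{\varphi}$ on $\L$''. Neither of these helps: $m_{\cal X_\v}$ lives on the compact base $\cal X_\v$, not on the fiber $\ker\psi_\v$, and properness of $\bs{\varphi}|_\L$ says nothing about decay inside a single fiber of $\bp$. What actually makes $\int_{V\cap\ker\psi_\v}$ converge is the Gaussian factor $e^{-I(\v')}$ that is already built into the local mixing statement \cref{thm:DecayofMatrixCoefficients}. There is no separate ``local limit theorem via spectral gap/Markov coding'' to invoke here; the CLT-type input is entirely encapsulated in \cref{thm:DecayofMatrixCoefficients}, and the paper's job is to integrate that Gaussian over the hypertube. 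Concretely, the new computation is \cref{lem:Asymptotic}: one substitutes $\u=\v'+\w/\sqrt{T}$ with $\v'\in V\cap\ker\psi_\v$, $\w\in W$, pulls out the Jacobian $T^{-(d-1)/2}$, and then runs dominated convergence---the delicate part being the region $\w\notin\hull(\sQ\cup\{0\})$, where an additional $T^{-(\rank-d)/2}$ rescaling in the $\v'$-variable is needed to get a dominating function. Your ``$\e$-approximation of $b$ by step functions'' is not where the difficulty lies.

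Second, you are missing the architecture that connects local mixing to the periodic-orbit sum. The paper does not pass directly from the sum over $[\ga]$ to a BMS integral. It first proves a lattice-point count in $\check N A M N$-coordinates (\cref{prop:STAsymptotic}) by the usual thickening argument, which requires approximating $\e$-neighborhoods of the truncated hypertube by truncated hypertubes with nearby cones (\cref{lem:STpmBounds})---this is where the observation that $c(\T,\v,b)$ is independent of the cone $\cal C$ is used. Then it transfers to flow boxes (\cref{prop:CountingInVT}), and finally applies a closing lemma to pass from $\Ga\cap\cal V_{T,b}$ to the count of primitive conjugacy classes with $\lambda(\ga)\in\T_{T,b}(\v)$. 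Your step (ii), ``invoke the joint equidistribution\ldots from \cite{CO23}'', hides exactly this chain, and without it there is no argument.
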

	
	\cref{thm:JointEquidistribution} was previously proved in \cite{CO23} for  tubes rather than hypertubes. 
	The remainder of this section is devoted to the proof of \cref{thm:JointEquidistribution}. We will often refer the reader to \cite[Sections 5]{CO23}, when the proofs are similar.
	
	\subsection*{Local mixing}
	Let $dx$ denote the right $G$-invariant measure on $\Gamma \backslash G$ induced by the Haar measure on $G$. 	The following theorem  was proved in \cite[Theorem 3.4]{ELO22b} using local mixing of the BMS measure \cite[Theorem 1.3]{CS23}.

	\begin{theorem}
		\label{thm:DecayofMatrixCoefficients}
		There exist a constant $\kappa_{\sf{v}} >0$ and an inner product $\langle \cdot, \cdot \rangle_*$ on $\LieA$ such that for any $\u  \in \ker\psi_\v$ and $\phi_1, \phi_2 \in C_{\mathrm{c}}(\Gamma \backslash G)$, we have
		\begin{multline*}
			\lim_{t \to +\infty} t^{\frac{\rank - 1}{2}}e^{(2\varrho - \psi_\v)(t\sf{v} + \sqrt{t}\u )} \int_{\Gamma \backslash G} 	\phi_1(xa_{t\sf{v} + \sqrt{t}\u }) \phi_2(x) \, dx \\
			=\frac{\kappa_{\sf{v}}e^{-I(\u )}}{|m_{\cal{X}_\v}|}  \sum_{Z} 	\BR\bigr|_{Z\check{N}}(\phi_1)\cdot 	\BRstar\bigr|_{ZN}(\phi_2)
		\end{multline*}
		where the sum is taken over all $A$-ergodic components $Z$ of $\BMS$,  $\BR$ and $\BRstar$ are the Burger-Roblin measures
		which are respectively right $N$ and  $\check{N}$-invariant where $\check{N} = w_0Nw_0^{-1}$ and 
		\begin{equation*}		\label{eqn:IDefinition}
			I(\u ) = \langle \u , \u \rangle_* - \frac{\langle \u , \sf{v} \rangle_*^2}{\langle \sf{v}, \sf{v}\rangle_*}
		\end{equation*} 
		
		Moreover, there exist $\eta_{\sf{v}}>0$ and $s_{\sf{v}}>0$ such that for all $\phi_1, \phi_2 \in C_{\mathrm{c}}(\Gamma \backslash G)$, there exists $D_\v>0$ depending continuously on $\phi_1$ and $\phi_2$ such that for all $(t,\u ) \in (s_{\sf{v}},\infty) \times \ker\psi_\v$ such that $t\sf{v}+\sqrt{t}\u  \in \LieA^+$, we have
		$$\left|t^{\frac{\rank - 1}{2}}e^{(2\varrho - \psi_\v)(t\sf{v} + \sqrt{t}\u )} \int_{\Gamma \backslash G} \phi_1(xa_{t\sf{v} + \sqrt{t}\u }) \phi_2(x) \, dx\right| \le D_{\sf{v}}  e^{-\eta_{\sf{v}} I(\u )}.
		$$
	\end{theorem}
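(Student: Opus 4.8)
The plan is to deduce both assertions from the local mixing of the Bowen--Margulis--Sullivan measure along one-parameter subgroups in directions of $\inte\limitcone$, established in \cite[Theorem 1.3]{CS23}, together with the product decomposition $d\BMS = dm_{\cal X_\v}\,d\u$ from \eqref{eqn:BMSProduct} and a transversal central limit refinement; this is the argument of \cite[Theorem 3.4]{ELO22b}. Concretely, I would first record the BMS input in the needed form: for $\v\in\inte\limitcone$ there exist $\kappa_\v>0$ and an inner product $\langle\cdot,\cdot\rangle_*$ on $\fa$ such that, writing $I$ for the associated quadratic form (which vanishes exactly on $\R\v$), one has for all $\phi_1,\phi_2\in C_{\mathrm c}(\Gamma\backslash G)$
\[
t^{(\rank-1)/2}\int_{\Gamma\backslash G}\phi_1(x\,a_{t\v+\sqrt{t}\,\u})\,\phi_2(x)\,d\BMS(x)\;\longrightarrow\;\frac{\kappa_\v\,e^{-I(\u)}}{|m_{\cal X_\v}|}\sum_{Z}\BMS|_{Z}(\phi_1)\,\BMS|_{Z}(\phi_2)
\]
as $t\to\infty$, uniformly for $\u$ in compact subsets of $\ker\psi_\v$, the sum being over the $A$-ergodic components $Z$ of $\BMS$. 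The polynomial normalization $t^{(\rank-1)/2}$ and the Gaussian weight $e^{-I(\u)}$ come from a single mechanism: by \cref{thm:Anosov}, $\psi_\Gamma$ is analytic and strictly concave transverse to the ray $\R\v$, so the conformal densities $\nu_\v$ and $\nu_{\i(\v)}$ force the correlation integral to concentrate in a $\sqrt{t}$-neighbourhood of the ray with a Gaussian profile; turning this into an asymptotic is a saddle-point / local limit theorem computation, which I would import from \cite{CS23}.

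Next I would pass from $\BMS$ to the Haar measure $dx$. In the local product coordinates $G\simeq\check N M A N$, disintegrating $dx$ along the $N$-orbit foliation of $\Gamma\backslash G$ --- whose transverse structure is carried by $\nu_\v$, $\nu_{\i(\v)}$ and the $\fa$-coordinate exactly as in the Hopf parametrization \eqref{hopf} --- one rewrites $\int_{\Gamma\backslash G}\phi_1(x a_w)\phi_2(x)\,dx$ as an integral against $d\BMS$ in which the $a_w$-translate distributes $\phi_1$ along $\check N$-leaves and $\phi_2$ along $N$-leaves. Since $\Ad(a_w)$ dilates Lebesgue measure on $N$ by $e^{2\varrho(w)}$ while the conformal density contributes the factor $e^{-\psi_\v(w)}$, the correct exponential normalization for $w=t\v+\sqrt{t}\,\u$ is exactly $e^{(2\varrho-\psi_\v)(w)}$; letting $t\to\infty$ and invoking the displayed BMS asymptotic, the test functions are replaced by their leafwise integrals against the conformal densities, namely by the Burger--Roblin measures $\BR|_{Z\check N}(\phi_1)$ and $\BRstar|_{ZN}(\phi_2)$, and the surviving constant is $\kappa_\v e^{-I(\u)}/|m_{\cal X_\v}|$. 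After approximating arbitrary $\phi_1,\phi_2\in C_{\mathrm c}(\Gamma\backslash G)$ by finite sums of products in these coordinates, this yields the asserted limit formula.

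Finally, I would obtain the uniform estimate by running the same argument quantitatively. A quantitative form of BMS local mixing --- with a power-saving error term and with the Gaussian constant controlled --- bounds the matrix-coefficient quantity appearing in the inequality by $D_\v e^{-\eta_\v I(\u)}$ for $t>s_\v$ and $\u$ in a fixed compact set, with $D_\v$ depending continuously on $\phi_1,\phi_2$; for the remaining range of $\u$ (still with $t\v+\sqrt{t}\,\u\in\fa^+$) one replaces the asymptotic by an a priori bound, estimating each $\phi_i$ by $\|\phi_i\|_\infty$ times the indicator of its support and using that large $I(\u)$ drives the translated direction into a region where the relevant shadows have conformal measure $O(e^{-cI(\u)})$; the two regimes are then patched. \emph{The main obstacle is precisely this uniformity in the transversal variable $\u$}: one must establish that the Gaussian decay $e^{-I(\u)}$ is genuine, not merely ray-wise asymptotic, with a rate $\eta_\v$ independent of $\phi_1,\phi_2$, which requires the full strength of the local central limit theorem for BMS correlations together with the strict concavity and analyticity of $\psi_\Gamma$ from \cref{thm:Anosov}. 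This is the content of \cite[Theorem 3.4]{ELO22b}, to which I ultimately defer.
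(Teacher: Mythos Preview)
Your proposal is correct and aligns with the paper's treatment: the paper does not prove \cref{thm:DecayofMatrixCoefficients} but simply cites it as \cite[Theorem 3.4]{ELO22b}, proved there via the BMS local mixing of \cite[Theorem 1.3]{CS23}. Your sketch of how that argument goes---BMS local mixing with Gaussian transversal profile, then passage to Haar via the $\check N M A N$ disintegration producing the $e^{(2\varrho-\psi_\v)}$ factor and the Burger--Roblin leafwise integrals---is an accurate outline of the cited proof, and you are right to identify the uniform-in-$\u$ estimate as the delicate point handled in \cite{ELO22b}.
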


	\subsection*{An integral asymptotic}
	The following integral of the multiplicative coefficients of \cref{thm:DecayofMatrixCoefficients} over the $\v$-truncations $\T_{T,b}(\v)$ will play an important role in computing the asymptotic in \cref{thm:JointEquidistribution}:
	\begin{equation}
		\label{eqn:L}
		L(\T_{T,b}(\v)):=\frac{\kappa_{\v}}{|m_{\cal{X}_\v}|}\int_{t\v+\sqrt{t}\u \in \T_{T,b}}e^{\delta_\v t}e^{-I(\u)} \, dt \, d\u
	\end{equation}
	where $\kappa_\v $, $I(\u)$ and $m_{\cal{X}_\v}$ are as in \cref{thm:DecayofMatrixCoefficients} and \eqref{eqn:BMSProduct}.

	Denote by $d\v'$ and $d\w $ the Lebesgue measures on $V\cap \ker\psi_\v$ and $W$ so that the Lebesgue measure $d\u $ on $\ker\psi_\v =  (V\cap \ker\psi_\v) \oplus W$ satisfies	\begin{equation}	
		\label{eqn:Lebesgue}		
		d\u  = d\v' \, d\w .	
	\end{equation} 
	Consider the following constant:
	\begin{equation}
		\label{eqn:c}  
		c(\T,\v,b) = \frac{\kappa_\v}{\delta_\v|m_{\cal{X}_\v}|}\int_{V\cap \ker\psi_\v}e^{-I(\v')} \, d\v' \int_{\sQ } e^{\delta_\v b(\w)} \, d\w.
	\end{equation}
	
	\begin{lemma}
		\label{lem:Asymptotic}
		We have as $T \to \infty$,
		$$L(\T_{T,b}(\v)) \sim c(\T,\v,b)\frac{e^{\delta_\v T}}{T^{(d-1)/2}}.$$
	\end{lemma}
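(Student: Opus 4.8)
The plan is to perform a change of variables adapted to the direct sum decomposition $\fa = W\oplus(V\cap\ker\psi_\v)\oplus\R\v$, reduce the integral over the $\v$-direction to a Laplace/Watson-type asymptotic, and then interchange limit and integration by dominated convergence. This follows the scheme used in \cite{CO23} for the case $\dim V=1$, the genuinely new ingredient being the extra integration over the unbounded subspace $V\cap\ker\psi_\v$. First I would rewrite the integral: writing $\u=\v'+\w$ with $\v'\in V\cap\ker\psi_\v$ and $\w\in W$ (recall $\dim W=d-1$), a membership $t\v+\sqrt t\,\u\in\T_{T,b}(\v)$ reads $\sqrt t\,\w\in\sQ$, $\;0\le t\le T+b(\sqrt t\,\w)$, and $t\v+\sqrt t\,\v'+\sf q\in\cal C$ with $\sf q:=\sqrt t\,\w$ — the last being the only genuine constraint, isolated by applying $\psi_\v$ to the $\v$-component. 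Substituting $\sf q=\sqrt t\,\w$, so that $d\w=t^{-(d-1)/2}\,d\sf q$, and using \eqref{eqn:Lebesgue} gives
$$L(\T_{T,b}(\v))=\frac{\kappa_\v}{|m_{\cal X_\v}|}\int_{\sQ}\int_{V\cap\ker\psi_\v}\Bigl(\,\int_{E_T(\v',\sf q)}e^{\delta_\v t}\,e^{-I(\v'+\sf q/\sqrt t)}\,t^{-(d-1)/2}\,dt\,\Bigr)\,d\v'\,d\sf q,$$
where $E_T(\v',\sf q)=\{\,t>0:\,0\le t\le T+b(\sf q),\ t\v+\sqrt t\,\v'+\sf q\in\cal C\,\}$.

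Next I would establish the inner $t$-asymptotic for fixed $\v',\sf q$. Since $\v\in\interior\cal C$ and $(t\v+\sqrt t\,\v'+\sf q)/t\to\v$, there is $t_0=t_0(\v',\sf q)$ with $t\v+\sqrt t\,\v'+\sf q\in\interior\cal C$ for $t>t_0$, so $E_T(\v',\sf q)\cap(t_0,\infty)=(t_0,\,T+b(\sf q)]$. On $[0,t_0]$ the integrand is bounded by a fixed integrable function of $t$ (for $\sf q\ne0$, as $t\to0$ one has $\sf q/\sqrt t\to\infty$, so $e^{-I(\v'+\sf q/\sqrt t)}$ decays like $e^{-\mathrm{const}/t}$ and kills the possible singularity of $t^{-(d-1)/2}$), contributing $O(1)$. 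On $(t_0,\,T+b(\sf q)]$, substituting $t=(T+b(\sf q))-u$, factoring out $e^{\delta_\v(T+b(\sf q))}(T+b(\sf q))^{-(d-1)/2}$, and using $\sf q/\sqrt t\to0$ hence $I(\v'+\sf q/\sqrt t)\to I(\v')$, a routine dominated-convergence argument together with $e^{\delta_\v(T+b(\sf q))}(T+b(\sf q))^{-(d-1)/2}\sim e^{\delta_\v b(\sf q)}e^{\delta_\v T}T^{-(d-1)/2}$ yields
$$\int_{E_T(\v',\sf q)}e^{\delta_\v t}\,e^{-I(\v'+\sf q/\sqrt t)}\,t^{-(d-1)/2}\,dt\ \sim\ \frac{e^{-I(\v')}\,e^{\delta_\v b(\sf q)}}{\delta_\v}\cdot\frac{e^{\delta_\v T}}{T^{(d-1)/2}}\qquad(T\to\infty).$$

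It remains to pass the limit under the double integral, i.e.\ to dominate $\tfrac{T^{(d-1)/2}}{e^{\delta_\v T}}$ times the inner integral uniformly in $T$ by a function integrable over $\sQ\times(V\cap\ker\psi_\v)$. I would split $E_T$ at $t=1$. On $E_T\cap[1,\infty)$ use $t^{-(d-1)/2}\le1$, the bound $I(\v'+\sf q/\sqrt t)\ge\tfrac12 I(\v')-C/t$ (Cauchy--Schwarz for the bilinear form of $I$, with $C=\sup_{\sf q\in\sQ}I(\sf q)$), and the elementary estimate $\int_1^S e^{\delta_\v t}t^{-(d-1)/2}\,dt\ll e^{\delta_\v S}S^{-(d-1)/2}$; this gives a bound of the form $\mathrm{const}\cdot e^{-\frac12 I(\v')}$, independent of $T$ and $\sf q$, hence integrable since $I$ is positive definite on $\ker\psi_\v$ and thus on $V\cap\ker\psi_\v$. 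On $E_T\cap[0,1]$ the relevant term is bounded — uniformly in $T$, using $\sup_{T\ge1}T^{(d-1)/2}e^{-\delta_\v T}<\infty$ — by a $T$-independent $G(\v',\sf q)$, and the substitution $\sf q=\sqrt t\,\w$ in the innermost integral, using positive-definiteness of $I$ on a complement of $V\cap\ker\psi_\v$ inside $\ker\psi_\v$, shows $\int_{\sQ}\int_{V\cap\ker\psi_\v}G<\infty$. Dominated convergence then gives
$$\frac{T^{(d-1)/2}}{e^{\delta_\v T}}\,L(\T_{T,b}(\v))\ \longrightarrow\ \frac{\kappa_\v}{\delta_\v|m_{\cal X_\v}|}\int_{V\cap\ker\psi_\v}e^{-I(\v')}\,d\v'\int_{\sQ}e^{\delta_\v b(\w)}\,d\w\ =\ c(\T,\v,b),$$
which is the claim.

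The main obstacle is the uniform domination in the last step: because $\v'$ ranges over the unbounded space $V\cap\ker\psi_\v$ and the cone condition $t\v+\sqrt t\,\v'+\sf q\in\cal C$ couples $t$ with $\v'$ and $\sf q$, one must produce a single integrable majorant valid for all large $T$. The decisive inputs are the quadratic growth of $I$ in the directions transverse to $\R\v$ (so that $e^{-I(\v')}$ is integrable over $V\cap\ker\psi_\v$) and the Laplace-type bound $\int_1^S e^{\delta_\v t}t^{-(d-1)/2}\,dt\ll e^{\delta_\v S}S^{-(d-1)/2}$, which furnishes the correct power of $T$.
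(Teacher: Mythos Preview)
Your argument is correct and reaches the same conclusion as the paper, but by a genuinely different change of variables. The paper rescales the $W$-component by $1/\sqrt{T}$, writing $\u=\v'+\w/\sqrt{T}$; this immediately factors out $T^{-(d-1)/2}$ but leaves $\w$ ranging over all of $W$, so the paper then splits the $\w$-integral into three regions ($\sQ$, $\mathrm{hull}(\sQ\cup\{0\})\setminus\sQ$, and $W\setminus\mathrm{hull}(\sQ\cup\{0\})$) and handles the unbounded third region by a separate trick (the quantity $j_\w<1$ and a second rescaling $\v'\to\v'/\sqrt{T}$). Your substitution $\sf q=\sqrt{t}\,\w$ rescales by $\sqrt{t}$ instead, which forces $\sf q$ into the compact set $\sQ$ from the outset and pushes the analytic work into a Laplace-type asymptotic for the inner $t$-integral; the domination then only needs the split at $t=1$. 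Both routes ultimately rest on the same fact---positive-definiteness of $I$ on $\ker\psi_\v$, giving quadratic growth in $\v'$ and hence an integrable Gaussian-type majorant over the unbounded factor $V\cap\ker\psi_\v$. Your approach trades the paper's three-region split for a slightly more delicate Laplace estimate and is arguably cleaner. One small correction: for the $t\in[0,1]$ piece you should invoke positive-definiteness of $I$ on \emph{all} of $\ker\psi_\v$ (not just on a complement of $V\cap\ker\psi_\v$); reverting the substitution gives a bound by $e^{\delta_\v}\int_{\ker\psi_\v}e^{-I(\u)}\,d\u<\infty$, which is what makes $\int_{\sQ}\int_{V\cap\ker\psi_\v}G<\infty$.
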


	In order to prepare for the proof of this lemma, for $T > 0$,
	$\v' \in V \cap \ker\psi_\v$ and $\w \in W$, let 
	\begin{align*}
		R_T(\v',\w) & = \{ t\ge 0: t\v+\sqrt{t}(\v' + \w) \in \T_{T,b}(\v)\}
		\\
		& = \{0 \le t \le T+b(\sqrt{t}\w): \sqrt{t}\w \in \sf{Q}, t\v+\sqrt{t}(\v' + \w) \in \cal{C}\}.
	\end{align*}
	Define $f_T:  \ker\psi_\v  \to \br$ by
	$$f_T(\v',\w ) = \frac{1}{e^{\delta_\v T}}e^{- I(\v' +\w /\sqrt{T})} \int_{R_T(\v',\w /\sqrt{T})} e^{\delta_\v t}\, dt$$
	for $(\v', \w)\in (V\cap \ker\psi_\v)\oplus W$.
	
	\begin{lemma} For all $\v' \in V\cap \ker\psi_\v$ and $\w \in W$, we have
		\begin{equation*}
			\lim_{T\to\infty}f_T(\v',\w )= \begin{cases}  \frac{1}{\delta_\v}e^{- I(\v')}e^{\delta_\v b(\w )} &\text{if $ \w \in \inte \sQ $}  \\ 0 & \text{if $\w \in W- \sQ $} .\end{cases} 
		\end{equation*}  
	\end{lemma}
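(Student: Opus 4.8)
The plan is to analyze the limit of $f_T(\v',\w)$ by carefully tracking the behavior of the inner integral $\int_{R_T(\v',\w/\sqrt T)} e^{\delta_\v t}\,dt$ and the exponential prefactor $e^{-I(\v'+\w/\sqrt T)}$ separately, and then combining them. First I would observe that as $T\to\infty$, the point $\w/\sqrt T\to 0$ in $W$, so by continuity of $I$ we get $e^{-I(\v'+\w/\sqrt T)}\to e^{-I(\v')}$; thus this factor contributes the $e^{-I(\v')}$ term and is harmless. The entire content of the lemma is therefore the asymptotics of $\tfrac{1}{e^{\delta_\v T}}\int_{R_T(\v',\w/\sqrt T)}e^{\delta_\v t}\,dt$, which splits into the two cases according to whether $\w$ is in the interior of $\sQ$ or outside $\cl{\sQ}$.

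Next I would unpack the set $R_T(\v',\w/\sqrt T)$. By the definition in the excerpt,
\[
R_T(\v',\w/\sqrt T)=\{0\le t\le T+b(\sqrt t\cdot \w/\sqrt T):\ \sqrt t\cdot \w/\sqrt T\in \sQ,\ t\v+\sqrt t(\v'+\w/\sqrt T)\in\cal C\}.
\]
The key simplification is that $\sqrt t\cdot\w/\sqrt T = \sqrt{t/T}\,\w$, so the membership condition $\sqrt{t/T}\,\w\in\sQ$ together with the fact that $t$ ranges up to roughly $T$ means that for $t$ near $T$ the relevant point is near $\w$ itself. For the case $\w\in\inte\sQ$: since $\sQ$ is a neighborhood of $\w$ in $W$ (well, $\w$ is an interior point), for $T$ large the condition $\sqrt{t/T}\,\w\in\sQ$ is satisfied for all $t$ in a full neighborhood of $T$ — more precisely for $t/T$ in an interval $[1-\eta,1+\eta']$ for suitable small $\eta$ — because $\sqrt{t/T}\to 1$; similarly $t\v+\sqrt t(\v'+\w/\sqrt T)$ lies along a direction converging to $\v\in\inte\limitcone\subset\inte\cal C$ so the cone condition is met for $t$ near $T$. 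Hence $R_T$ contains the interval $[\,\text{something like }T(1-\eta),\ T+b(\sqrt{t/T}\w)\,]$, and by continuity of $b$ the upper endpoint is $T+b(\w)+o(1)$. Since $e^{\delta_\v t}$ concentrates all its mass near the upper endpoint, $\int_{R_T}e^{\delta_\v t}\,dt = \tfrac1{\delta_\v}e^{\delta_\v(T+b(\w)+o(1))}(1+o(1))$, and dividing by $e^{\delta_\v T}$ gives $\tfrac1{\delta_\v}e^{\delta_\v b(\w)}$. For the case $\w\in W-\cl{\sQ}$: here $\w\notin\sQ$, and since $\sqrt{t/T}\to1$ the point $\sqrt{t/T}\,\w$ is close to $\w$ hence stays outside $\sQ$ for all $t\ge (1-\eta)T$, so $R_T\subset[0,(1-\eta)T]$ for large $T$; thus $\int_{R_T}e^{\delta_\v t}\,dt\le \tfrac1{\delta_\v}e^{\delta_\v(1-\eta)T}$ and dividing by $e^{\delta_\v T}$ sends this to $0$.

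The main obstacle I anticipate is making the ``$t$ near $T$'' argument fully rigorous in the interior case — in particular showing that $R_T(\v',\w/\sqrt T)$ really does contain an interval whose right endpoint is $T+b(\w)+o(1)$ and contains points arbitrarily close to it from below, uniformly enough to pin down the $\tfrac1{\delta_\v}e^{\delta_\v b(\w)}$ constant. This requires: (i) that $\w\in\inte\sQ$ implies $\sqrt{t/T}\,\w\in\sQ$ for $t$ in a neighborhood of $T$ of the form $[T-O(\sqrt T),\,T+O(1)]$ or larger — which follows since $\sqrt{t/T}\w\to\w$ uniformly on such ranges and $\inte\sQ$ is open; (ii) that the cone condition $t\v+\sqrt t(\v'+\w/\sqrt T)\in\cal C$ holds there — since dividing by $t$ the direction is $\v+t^{-1/2}(\v'+\w/\sqrt T)\to\v\in\inte\cal C$ and $\cal C$ is a closed cone with $\v$ in its interior, this holds for $t$ large; and (iii) continuity of $b$ to replace $b(\sqrt{t/T}\,\w)$ by $b(\w)+o(1)$. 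Once these are in place, a standard Laplace/dominated-convergence estimate on $\int e^{\delta_\v t}\,dt$ over an interval with right endpoint $T+b(\w)+o(1)$ finishes both cases; in the exterior case the argument is easier since we only need a one-sided containment $R_T\subset[0,(1-\eta)T]$.
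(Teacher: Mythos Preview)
Your proposal is correct and follows essentially the same approach as the paper: separate the harmless factor $e^{-I(\v'+\w/\sqrt T)}\to e^{-I(\v')}$, then analyze the integral over $R_T(\v',\w/\sqrt T)$ by observing that the cone condition $t\v+\sqrt t(\v'+\w/\sqrt T)\in\cal C$ only excludes a bounded range of $t$ (the paper formalizes this by introducing $\cal J(\v',\w)=\inf\{t\ge 0: t\v+\sqrt t\v'+\w\in\cal C\}$ and noting $\sup_{\sf q\in\sQ}\cal J(\v',\sf q)$ is finite and independent of $T$), and then reducing to the behaviour of $\{0\le t\le T+b(\sqrt{t/T}\w):\sqrt{t/T}\w\in\sQ\}$, which the paper dispatches by citing \cite[Lemma 5.3]{CO23} and which you carry out directly via the same Laplace-type estimate. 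Your treatment of the two cases $\w\in\inte\sQ$ and $\w\in W-\sQ$ (recall $\sQ$ is compact, hence closed) matches the paper's.
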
\begin{proof}
		Fix $\v' \in V\cap \ker\psi_\v$ and $\w \in W$. Let
		$$\cal{J}(\v',\w ) = \inf\{t \ge 0 : t\v + \sqrt{t}\v' + \w  \in \cal{C}\}.$$ 
		Since $\cal{C}$ is a cone and $\v \in \interior\cal{C}$, if $t_0 \in \cal{J}(\v',\w )$, then $t \in \cal{J}(\v',\w )$ for all $t \ge t_0$. Then we have
		\begin{align*}
			R_T(\v',\w /\sqrt{T}) & = \{\cal{J}(\v',\sqrt{\tfrac{t}{T}}\w ) \le t \le T+b(\sqrt{\tfrac{t}{T}}\w ): \sqrt{\tfrac{t}{T}}\w  \in \sQ \}
			\\
			& \supset \{\sup_{\sf{q}  \in \sQ }\cal{J}(\v',\sf{q} ) \le t \le T+b(\sqrt{\tfrac{t}{T}}\w ): \sqrt{\tfrac{t}{T}}\w  \in \sQ \}
		\end{align*}	
		where $\sup_{\sf{q}  \in \sQ }\cal{J}(\v',\sf{q} )$ is independent of $T$. Then setting 
		$$R_T(\w/\sqrt{T}) =  \{0 \le t \le T+b(\sqrt{\tfrac{t}{T}}\w ): \sqrt{\tfrac{t}{T}}\w  \in \sQ \},$$
		we have 
		$$	\lim_{T\to\infty}\frac{1}{e^{\delta_\v T}} \int_{R_T(\v',\w /\sqrt{T})} e^{\delta_\v t}\, dt = 	\lim_{T\to\infty}\frac{1}{e^{\delta_\v T}} \int_{R_T(\w /\sqrt{T})} e^{\delta_\v t}\, dt.$$
		By the same computations done for $R_T(u/\sqrt{T})$ in \cite[Lemma 5.3]{CO23}, we obtain
		\begin{equation*}
			\lim_{T\to\infty}\frac{1}{e^{\delta_\v T}} \int_{R_T(\w /\sqrt{T})} e^{\delta_\v t}\, dt = \begin{cases}  \frac{1}{\delta_\v}e^{\delta_\v b(\w )} &\text{if $ \w \in \inte \sQ $}  \\ 0 & \text{if $\w \in W- \sQ $} .\end{cases} 
		\end{equation*} 
		Recalling the definition of $I$ from \eqref{eqn:IDefinition}, it is easy to check that 
		\begin{equation*}
			I(\v'+\w /\sqrt{T}) = \tfrac{1}{T}I(\w ) + \tfrac{2}{\sqrt{T}}(\langle \v', \w \rangle_*-\tfrac{\langle \v', \v\rangle_*\langle \w ,\v\rangle_*}{\langle\v,\v\rangle_*}) + I(\v').
		\end{equation*}
		Putting these together completes the proof of the claim. \end{proof}
	
	\subsection*{Proof of \cref{lem:Asymptotic}}
	Fix $T > 0$. For $\u  \in \ker\psi_\v$, we write $\u  = \sf{v}'+ \w /\sqrt{T}$ where $\v' \in V\cap \ker\psi_\v$ and $\w  \in W$. Then we have
	\begin{equation}
		\label{eqn:f}
		\begin{aligned}[b]
			& \int_{t\v + \sqrt{t}\u  \in \T_{T,b}}e^{\delta_\v t}e^{-I(\u )}dt \, d\u  
			\\
			= &\frac{1}{T^{(d-1)/2}}\int_{V\cap \ker\psi_\v}\int_{W}\int_{t \in R_T(\v',\w/\sqrt{T})}e^{\delta_\v t}e^{-I(\v'+\w/\sqrt{T} )}dt \, d\w \, d\v'
			\\
			= &\frac{e^{\delta_\v T}}{T^{(d-1)/2}}\int_{V\cap \ker\psi_\v}\int_{W}f_T(\v',\w) \, d\w \, d\v'.
		\end{aligned}
	\end{equation}
	Set
	$$\sf{A}_T= \int_{V\cap \ker\psi_\v}\int_{\sQ } f_T(\v',\w)\, d\w \, d\v';$$
	$$\sf{B}_T= \int_{V\cap \ker\psi_\v}\int_{\mathrm{hull} (\sQ \cup \{0\}) -\sQ } f_T(\v',\w)\, d\w \, d\v';$$
	$$\sf{C}_T= \int_{V\cap \ker\psi_\v}\int_{\w\notin \mathrm{hull}(\sQ  \cup \{0\})} f_T(\v',\w)\, d\w \, d\v'$$
	where $\mathrm{hull}(\sQ\cup\{0\})$ denotes the convex hull of $\sQ \cup \{0\}$.
	By the hypothesis that $\partial \sQ$ has measure zero, we have
	\be\label{ft}\int_{V\cap \ker\psi_\v}\int_{W}f_T(\v',\w) \, d\w \, d\v'= \sf{A}_T + \sf{B}_T + \sf{C}_T.\ee 
	
	\noindent{\bf Asymptotics of $\sf{A}_T$ and $\sf{B}_T$.}
	Let $\w \in\mathrm{hull} (\sQ \cup \{0\})$. In this case,
	since $R_T(\v',\w) \subset [0, T+\max b]$ for all $\v' \in V\cap \ker\psi_\v$ and $T \ge 1$, we have
	\begin{align*}
		f_T(\v',\w) & \le \frac{1}{e^{\delta_\v T}}e^{- I(\v'+\w/\sqrt{T} )} \int_0^{T+\max b} e^{\delta_\v t}\, dt 
		\\
		& \le \frac{1}{\delta_\v}e^{\delta_\v \max b}e^{- \inf_{\sf{q} \in\mathrm{hull} (\sQ \cup \{0\})}I(\v'+\sf{q})}.
	\end{align*}
	Note that, as a function of $\v' \in V\cap \ker\psi_\v$, $\inf_{\sf{q} \in\mathrm{hull} (\sQ \cup \{0\})}I(\v' + \sf{q})$ is radially increasing with at least quadratic rate: for all $r>1$, we have
	$$\inf_{\sf{q} \in\mathrm{hull} (\sQ \cup \{0\})}I(r\v'+\sf{q}) = \inf_{\sf{q} \in\mathrm{hull} (\sQ \cup \{0\})}r^2I(\v'+\sf{q}/r) \ge r^2\inf_{\sf{q} \in\mathrm{hull} (\sQ \cup \{0\})}I(\v' + \sf{q}).$$
	Hence the function $\v' \mapsto e^{- \inf_{\sf{q} \in\mathrm{hull} (\sQ \cup \{0\})}I(\v' + \sf{q})}$ is $L^1$-integrable on $V\cap \ker\psi_\v$. Therefore we can apply the Lebesgue dominated convergence theorem to obtain
	\begin{align*}
		\lim_{T\to\infty}\sf{A}_T & = \int_{V\cap \ker\psi_\v}\int_{\sQ } \lim_{T\to\infty}f_T(\v',\w)\, d\w \, d\v' 
		\\ & =  \frac{1}{\delta_\v}\int_{V\cap \ker\psi_\v}e^{- I(\v')}\, d\v'\int_{\sQ }e^{\delta_\v b(\w)} \, d\w \\&= \frac{|m_{\cal{X}_\v}|}{\kappa_\v}c(\T,\v,b)
	\end{align*}
	and
	$$\lim_{T\to\infty}\sf{B}_T = \int_{V\cap \ker\psi_\v}\int_{\mathrm{hull} (\sQ \cup \{0\}) -\sQ } \lim_{T\to\infty}f_T(\v',\w)\, d\w \, d\v' =  0.$$
	
	\noindent{\bf Asymptotic of $\sf{C}_T$.}
	Let $\w \notin\mathrm{hull} (\sQ \cup \{0\})$and $\v' \in V\cap \ker\psi_\v$.
	Observe that 
	$$j_{\w}: =
	\sup \{0\le s : s\w \in \sQ \} <1$$
	and hence
	\begin{align*}
		& R_T(\v'/\sqrt{T},\w/\sqrt{T}) 
		\\
		= & \{0 \le t \le T+b(\sqrt{t/T}\w): \sqrt{t/T}\w \in \sQ , t\v+\sqrt{t/T}(\v' + \w) \in \cal{C}\} 
		\\
		\subset & [0,j_\w^2T]. 
	\end{align*}
	Let 
	$$g(\v',\w)={ 2\sqrt{\delta_\v I(\v'+\w)(1-j_{\w}^2)}} .$$
	Then for all $T\ge 1$, we have
	\begin{align*}
		f_T(\v'/\sqrt{T},\w) & \le \frac{1}{e^{\delta_\v T}}e^{- I(\v'/\sqrt{T}+\w/\sqrt{T})} \int_{R_T(\v'/\sqrt{T},\w/\sqrt{T})} e^{\delta_\v t}\, dt 
		\\
		& \le \tfrac{1}{\delta_\v}e^{- I(\v'+\w)/T}e^{-\delta_\v T(1-j_{\w}^2)} 
		\\
		& \le \tfrac{1}{\delta_\v}e^{- g(\v',\w)}
	\end{align*}
	where the last inequality uses the fact that $a+b \ge 2\sqrt{ab}$ for all $a,b \ge 0$.
	Note that for all $r>1$,
	\begin{align*}
		g(r\v',r\w) &={2 \sqrt{\delta_\v I(r\v' + r\w)(1-j_{r \w}^2)}} 
		\\
		& =2r\sqrt{\delta_\v I(\v'+\w)(1-j_{\w}^2/r^2)} \ge r g(\v',\w),
	\end{align*}
	i.e., the function $g(\v',\w)$ is radially increasing with at least a linear rate.  Hence the function $(\v',\w)\mapsto  e^{-g(\v',\w)}$ is $L^1$-integrable on $(V\cap\ker\psi_\v) \times (W -\mathrm{hull}(\sQ  \cup \{0\}))$.
	
	Then performing a change of variable and applying the Lebesgue dominated convergence theorem, we obtain
	\begin{align*}
		\lim_{T\to\infty} \sf{C}_T & = \lim_{T\to\infty}\int_{V\cap\ker\psi_\v}\int_{\w\notin \mathrm{hull} (\sQ \cup \{0\})}\frac{1}{T^{(\rank - d)/2}}f_T(\v'/\sqrt{T},\w) \, d\w \, d\v'
		\\
		& = \int_{V\cap\ker\psi_\v}\int_{\w\notin \mathrm{hull} (\sQ \cup \{0\})}\lim_{T\to\infty}\frac{1}{T^{(\rank - d)/2}}f_T(\v'/\sqrt{T},\w) \, d\w \, d\v'
		\\
		&= \int_{V\cap\ker\psi_\v}\int_{\w\notin \mathrm{hull} (\sQ \cup \{0\})}0 \, d\w \, d\v'= 0.
	\end{align*} 
	By \eqref{eqn:L}, \eqref{eqn:f} and \eqref{ft}, we obtain
	\begin{align*}
		\lim_{T \to \infty}\frac{T^{(d-1)/2}}{e^{\delta_\v T}}L(\T_{T,b}(\v)) & =\frac{\kappa_{\v}}{|m_{\cal{X}_\v}|}\lim_{T\to\infty}(\sf{A}_T+\sf{B}_T+\sf{C}_T)
		\\
		& = c(\T,\v,b)
	\end{align*}
	and this finishes the proof of \cref{lem:Asymptotic}.

	\subsection*{Counting in $\check{N}AMN$-coordinates.}
	\label{subsec:CountinginNAMNCoordinates}
	
	Fix bounded Borel sets 
	$$\check{\Xi} \subset\check{N}=w_0Nw_0^{-1},\;\; \Xi \subset N,\;\; \Theta \subset M$$ with non-empty relative interiors and null boundaries: $$\nu_\v(\partial (\check{\Xi} e^+))=\nu_{\involution(\v)}(\partial (\Xi^{-1}e^-))=\vol_M(\partial\Theta)=0.$$
	
	For $T > 0$, let  
	$$S_{T,b} = \check{\Xi}\exp(\T_{T,b}(\v)) 	\Theta \Xi \subset \check{N}AMN.$$

	The first counting result towards \cref{thm:JointEquidistribution} is an asymptotic for $\#(\Gamma \cap S_{T,b})$ for which we use \cref{lem:Asymptotic}. For simplicity, we state the asymptotic in the case that $M_\Ga = M$. When $M_\Ga \ne M$, the fact that $\BMS$ has $[M:M_\Ga]$ many $A$-ergodic components makes the statement of asymptotics more involved  as in \cref{thm:DecayofMatrixCoefficients}, but this can be handled in the same way as in \cite[Proposition 5.12]{CF23}.
	
	\begin{proposition} 
		\label{prop:STAsymptotic}
		We have as  $T\to\infty$,
		$$\#(\Gamma \cap S_{T,b}) \sim c(\T,\v,b) \tilde{\nu}_\v(\check{\Xi})\tilde{\nu}_{\involution(\v)}(\Xi^{-1})\vol_M(\Theta)\frac{e^{\delta_\v T}}{T^{(d-1)/2}}$$
		where $\tilde{\nu}_\v$ and $\tilde{\nu}_{\involution(\v)}$ are measures on $\check{N}$ and $N$, respectively defined by
		\begin{align*}
			& d\tilde{\nu}_\v(h) = e^{\psi_\v(\beta_{h^+}(e,h))}  \, d\nu_\v(h^+) ;
			& d\tilde{\nu}_{\involution(\v)}(n) = e^{(\psi_\v\circ\involution)(\beta_{n^-}(e,n))}\, d\nu_{\involution(\v)}(n^-)
		\end{align*}
		for $h \in \check{N}$ and $n \in N$. 
	\end{proposition}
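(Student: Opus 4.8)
The plan is to follow the counting-by-local-mixing scheme of \cite[Section 5]{CO23} (cf.\ \cite[Proposition 5.12]{CF23}), the new ingredient being \cref{lem:Asymptotic}, which governs the relevant integral over the higher-dimensional truncated hypertube $\T_{T,b}(\v)$. First I would fix $\epsilon>0$ and, exactly as in \cite[Section 5]{CO23}, thicken the indicator of $S_{T,b}$: one builds $\Psi^{\pm}_\epsilon\in C_{\mathrm c}(\Gamma\backslash G)$ as $\Gamma$-averages of functions on $G$ that in $\check N AMN$-coordinates are products of $\epsilon$-smoothings of $\mathbbm 1_{\check\Xi}$, $\mathbbm 1_{\Theta}$, $\mathbbm 1_{\Xi}$ in the $\check N$-, $M$-, $N$-directions with an $\epsilon$-bump in the $A$-direction, chosen so that $\#(\Gamma\cap S_{T,b})$ is sandwiched, for $T$ large, between the integrals $\int_{\T_{T,b}(\v)}\bigl(\int_{\Gamma\backslash G}\Psi^{\pm}_\epsilon(xa_{X})\Psi^{\pm}_\epsilon(x)\,dx\bigr)\,dX$. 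The fact that every contributing $\gamma$ has $\mu(\gamma)$ deep in $\interior\LieA^+$ — which holds because $\Gamma$ is Anosov, whence $\limitcone\subset\interior\LieA^+\cup\{0\}$, together with $\v\in\interior\limitcone$ — guarantees that such $\gamma$ lie in the open Bruhat cell $\check N AMN$ and have well-controlled Bruhat coordinates $\check n(\gamma),m(\gamma),n(\gamma)$, which is what renders the null-boundary hypotheses $\nu_\v(\partial(\check\Xi e^{+}))=\nu_{\involution(\v)}(\partial(\Xi^{-1}e^{-}))=\vol_M(\partial\Theta)=0$ the pertinent ones for removing the $\epsilon$-smoothing.

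Next I would insert the asymptotic of \cref{thm:DecayofMatrixCoefficients} into the inner integral. Writing $X=t\v+\sqrt t\,\u$ with $\u\in\ker\psi_\v$, and taking first the case $M_\Gamma=M$ (so that $\BMS$ is $A$-ergodic), the matrix coefficient $\int_{\Gamma\backslash G}\Psi^{+}_\epsilon(xa_{X})\Psi^{+}_\epsilon(x)\,dx$ is asymptotic to
$$\frac{\kappa_\v}{|m_{\cal X_\v}|}\,t^{-\frac{\rank-1}{2}}\,e^{-(2\varrho-\psi_\v)(X)}\,e^{-I(\u)}\,\BR|_{\check N}(\Psi^{+}_\epsilon)\,\BRstar|_{N}(\Psi^{+}_\epsilon).$$
Since $\psi_\v(X)=t\,\psi_\Gamma(\v)=\delta_\v t$ we have $e^{-(2\varrho-\psi_\v)(X)}=e^{\delta_\v t}e^{-2\varrho(X)}$, and the factors $e^{-2\varrho(X)}$ and $t^{-(\rank-1)/2}$ are cancelled respectively by the $e^{2\varrho}$-Jacobian of Haar measure in $\check N AMN$-coordinates and by the Jacobian of the change of variables $X=t\v+\sqrt t\,\u$ — the bookkeeping carried out in \cite[Section 5]{CO23}. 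What remains, after letting $\epsilon\to0$ so that $\BR|_{\check N}(\Psi^{+}_\epsilon)\to\tilde\nu_\v(\check\Xi)$, $\BRstar|_{N}(\Psi^{+}_\epsilon)\to\tilde\nu_{\involution(\v)}(\Xi^{-1})$ and the $M$-part converges to $\vol_M(\Theta)$ — legitimate precisely because of the above null-boundary hypotheses — is $L(\T_{T,b}(\v))\cdot\tilde\nu_\v(\check\Xi)\,\tilde\nu_{\involution(\v)}(\Xi^{-1})\,\vol_M(\Theta)$ with $L(\T_{T,b}(\v))$ as in \eqref{eqn:L}. Invoking \cref{lem:Asymptotic}, $L(\T_{T,b}(\v))\sim c(\T,\v,b)\,e^{\delta_\v T}/T^{(d-1)/2}$, which yields the claimed asymptotic.

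The step demanding the most care — and the one absent in the tube case of \cite{CO23} — is the uniform-in-$T$ passage from the smoothed matrix-coefficient integral to $L(\T_{T,b}(\v))$: since $\T_{T,b}(\v)$ is unbounded in the transverse directions $V\cap\ker\psi_\v$ and $W$, one must exclude escape of mass there. This is exactly where the effective estimate in the second half of \cref{thm:DecayofMatrixCoefficients}, the Gaussian-type domination by $D_\v e^{-\eta_\v I(\u)}$, is indispensable: after the rescaling $\u\mapsto\u/\sqrt T$ it furnishes an $L^1$-majorant on $\ker\psi_\v$, so the dominated-convergence argument already run in the proof of \cref{lem:Asymptotic} — the splitting of the $W$-integral into $\mathsf A_T$, $\mathsf B_T$, $\mathsf C_T$ and the radial-growth estimates on $I$ — carries over to the lattice-point count; this is the main obstacle to writing out in full. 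Finally, the case $M_\Gamma\ne M$ adds only bookkeeping: $\BMS$ then has $[M:M_\Gamma]$ many $A$-ergodic components $Z$, each contributing a term $\BR|_{Z\check N}(\cdot)\,\BRstar|_{ZN}(\cdot)$ as in \cref{thm:DecayofMatrixCoefficients}, and summing these recovers the factor $\vol_M(\Theta)$ exactly as in \cite[Proposition 5.12]{CF23}.
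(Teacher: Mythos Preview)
Your proposal is correct and follows essentially the same route as the paper: sandwich $\#(\Gamma\cap S_{T,b})$ between smoothed counting integrals, apply local mixing (\cref{thm:DecayofMatrixCoefficients}) with its effective bound to justify dominated convergence, and invoke \cref{lem:Asymptotic} for the resulting integral $L(\T_{T,b}(\v))$; the treatment of $M_\Gamma\ne M$ via \cite[Proposition 5.12]{CF23} is likewise what the paper does.

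One technical wrinkle the paper singles out that you do not mention: in the $\varepsilon$-thickening step, the $A$-part of $S_{T,b,\varepsilon}^{\pm}$ is an $\varepsilon$-neighborhood of $\T_{T,b}(\v)=(\sQ+(V\cap\ker\psi_\v)+[0,T]\v)\cap\cal C$, and unlike in the tube case this is \emph{not} again a truncated hypertube, because the $\varepsilon$-neighborhood of the cone $\cal C$ is not a cone. The paper handles this with an auxiliary lemma (\cref{lem:STpmBounds}) producing hypertubes $\T(\sQ^{\pm}_\varepsilon,V,\cal C^{\pm}_\varepsilon)$ with perturbed cones $\cal C^{\pm}_\varepsilon$ and boundary functions $b^{\pm}_\varepsilon$ that sandwich the thickened set; the key observation is that the constant $c(\T,\v,b)$ in \cref{lem:Asymptotic} does not depend on $\cal C$, so replacing $\cal C$ by $\cal C^{\pm}_\varepsilon$ is harmless in the limit $\varepsilon\to0$. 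Since this cone issue is absent for tubes, you cannot defer it to \cite[Section 5]{CO23}; it is the one piece of $\varepsilon$-bookkeeping genuinely new here and should be flagged alongside the escape-of-mass issue you already identified.
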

	
	\begin{proof} The main new technical ingredient of this proof is \cref{lem:Asymptotic}. Using this,
		we explain how the proof \cite[Proposition 5.1]{CO23} can be adjusted for our setting. Given a bounded Borel subset $B$ of $G$, define the counting function $F_B:\Ga\ba G \times\Ga\ba G \to \N$ by
		$$F_B(x,y) = \sum_{\gamma \in \Gamma} 1_B(g^{-1}\gamma h) \quad\text{for $x=\Gamma g, y=\Gamma h\in \Gamma \ba G$}.$$ 
		The quantity $ F_{S_{T,b}}([e],[e]) = \#\Gamma \cap S_{T,b}$ can be approximated above and below as follows. For $\e>0$, let $G_\e$ denote the $\e$-neighborhood of identity in $G$ and similarly for other subgroups of $G$. For any $\e > 0$, let
		\begin{align*}
			& S_{T,b,\e}^- = \bigcap_{g_1,g_2 \in G_\e}g_1S_{T,b}g_2; & S_{T,b,\e}^+ = \bigcup_{g_1,g_2 \in G_\e}g_1S_{T,b}g_2,
		\end{align*}
        and 
		$$\psi_\varepsilon \in C_c(G) \text{ such that } \psi_\e\ge 0, \;\; \supp \psi_\varepsilon \subset G_\varepsilon \text{ and } \int_G \psi_\varepsilon \, dg = 1.$$ 
		Define $\Psi_\varepsilon \in C_c^\infty(\Gamma \backslash G) $ by $$ \Psi_\varepsilon([g]) = \sum_{\gamma \in \Gamma} \psi_\varepsilon(\gamma g).$$ 
		Then for any $\e>0$, we have 
		\begin{multline}
			\label{eqn:int}
			\int_{\Gamma \backslash G \times \Gamma\backslash G}F_{S_{T,b,\varepsilon}^-}(x,y)\Psi_\e(x)\Psi_\e(y) \, dx \, dy 
			\\
			\le F_{S_{T,b}}([e],[e]) \le \int_{\Gamma \backslash G \times \Gamma\backslash G}F_{S_{T,b,\varepsilon}^+}(x,y)\Psi_\e(x)\Psi_\e(y) \, dx \, dy
		\end{multline}
		where the integrals are taken with respect to the Haar measure on $\Gamma \ba G$.
		
		We now need to approximate the sets $S_{T,b,\e}^\pm$ for $\e$ small with approximations of the same product form as $S_{T,b}$ using $\e$-neighborhoods of $\T_{T,b}(\v)$. It is important that these approximations are well-behaved and indeed, for tubes, these approximations are again tubes. However, unlike a tube,   the $\varepsilon$-neighborhood of a hypertube is not again a hypertube as we have defined. Indeed, the $\varepsilon$-neighborhood of $\sf{Q}+V$ in $\LieA$ is of the same form but the $\e$-neighborhood of the hypertube $\T=(\sf{Q}+V)\cap\cal{C}$ is not a hypertube since the $\e$-neighborhood of a cone is not a cone. However, the asymptotic in \cref{lem:Asymptotic} does not depend on the cone $\cal{C}$ which contains $\v$ in its interior so it will suffice for us to approximate $\T$ using hypertubes that use cones which approximate $\cal{C}$ as in the following lemma whose proof is similar to \cite[Lemma 5.2]{CO23}:
		
		\begin{lemma}
			\label{lem:STpmBounds} 
			For all sufficiently small $\e > 0$,
			there exist  hypertubes $\T^\pm_\e := \T(\sQ^\pm_\e,V,\cal{C}^\pm_\e)$, with $\v$-truncations $\T^\pm_{T,b_\e^\pm}(\v)$ of $\T^\pm_\e$ and
			Borel subsets $\check{\Xi}_{\e}^- \subset \check{\Xi} \subset \check{\Xi}_{\e}^+$ of $\check N$, $\Xi_{\e} ^- \subset \Xi \subset \Xi_{\e} ^+$  of $N$ and $\Theta_{\e}^- \subset \Theta \subset \Theta_{\e}^+$  of $M$ satisfying the following:
			\begin{enumerate}	\item \label{itm:4.3.1} for all $T>0$,
				\begin{multline*}
					\check N_{O(\e)}\check{\Xi}_{\e}^- \exp(\T_{T,b_\e^-}^-(\v)) M_{O(\e)} \Theta^-_{\e} \Xi_{\e} ^-N_{O(\e)}\subset S_{T,b,\e}^-  \\ \subset S_{T,b,\e}^+
					\subset \check N_{O(\e)}\check{\Xi}_{\e}^+ \exp(\T_{T,b_\e^+}^+(\v)) M_{O(\e)} \Theta^+_{\e} \Xi_{\e} ^+N_{O(\e)}
				\end{multline*}
				where the inclusions hold up to some bounded subset of $G$ that does not depend on $\e$ and $T$;
				\item \label{itm:4.3.2} an $O(\e)$-neighborhood of $\T_{T,b^-_\e}(\sQ^-_\e,V,\cal{C})(\v)$ 
				(resp. $\T_{T, b}(\v)$) contains $\T_{T,b}(\v)$ (resp.  $\T_{T,b^+_\e}(\sQ^+_\e,V,\cal{C})(\v)$);
				
				\item \label{itm:4.3.3} $\nu_\v((\check{\Xi}_{\e}^+ -\check{\Xi}_{\e}^-) e^+ ) \to 0$, $\nu_{\involution(\v)}((\Xi_{\e} ^+ -\Xi_{\e} ^-) e^- ) \to 0$ and $\vol_M(\Theta_\e^+ - \Theta_\e^-) \to 0$ as $\e \to 0$.
			\end{enumerate}
		\end{lemma}
		
		Returning to \eqref{eqn:int}, using local mixing (\cref{thm:DecayofMatrixCoefficients}) to extract the main terms of the integrals in \eqref{eqn:int} for large $T$ and the same computations as in \cite[Proposition 5.1]{CO23} and approximating $S_{T,b,\varepsilon}^\pm$ using \cref{lem:STpmBounds}, we obtain
		\begin{align*}
			& \int_{\Gamma \backslash G \times \Gamma\backslash G}F_{S_{T,b,\varepsilon}^\pm}(x,y)\Psi_\e(x)\Psi_\e(y) \, dx \, dy 
			\\ &
			\sim (1+O(\varepsilon))L(\T_{T,b_\e^\pm}^\pm(\v))\tilde{\nu}_\v(\check{\Xi})\tilde{\nu}_{\involution(\v)}(\Xi ^{-1})\vol_M(\Theta)\\ 
			&  \sim (1+O(\varepsilon))  c(\T^\pm,\v,b_\e^\pm)\tilde{\nu}_\v(\check{\Xi})\tilde{\nu}_{\involution(\v)}(\Xi ^{-1})\vol_M(\Theta)\frac{e^{\delta_\v T}}{T^{(d-1)/2}} \quad \text{ as } T \to \infty,
		\end{align*}  
		where we used \cref{lem:Asymptotic} in the last asymptotic.	Taking $\e \to 0$  completes the proof.
	\end{proof}

	\subsection*{Counting via flow boxes.}
	We use {\it{flow boxes}}:
	\begin{definition}[$\varepsilon$-flow box at $g_0$]
		\label{def:FlowBox}
		Given $g_0 \in G$ and $\varepsilon > 0$, the {\it{$\varepsilon$-flow box at $g_0$}} is defined by 
		$$\cal{B}(g_0,\varepsilon)=g_0(\check{N}_\varepsilon N \cap N_\varepsilon \check{N} AM)M_\varepsilon A_\varepsilon.$$	
		We denote the projection of $\cal{B}(g_0,\varepsilon)$ into $\Gamma \backslash G/M$ by $\tilde{\cal{B}}(g_0,\varepsilon)$. 
	\end{definition}
	
	For $g_0\in G$ and $T, \varepsilon >0$, we denote
	\begin{equation*}
		\label{eqn:VTDefinition}
		\cal{V}_{T,b}(g_0,\varepsilon) = \cal{B}(g_0,\varepsilon)\T_{T,b}(\v)\Theta\cal{B}(g_0,\varepsilon)^{-1};
	\end{equation*}
	$$\cal{W}_{T,b}(g_0,\varepsilon)=\{gamg^{-1}:g\in\cal{B}(g_0,\varepsilon),am\in \T_{T,b}(\v) \Theta\}.$$

	The next proposition is an asymptotic for $\#(\Gamma \cap \cal{V}_{T,b}(g_0,\varepsilon))$:
	
	\begin{proposition}
		\label{prop:CountingInVT} 
		Let $g_0 \in G$.
		For all sufficiently small $\varepsilon>0$, we have  
		\begin{multline*}
			\#(\Gamma\cap\cal{V}_{T,b}(g_0,\varepsilon))
			\\
			=c(\T,\v,b)\left( \frac{\BMS(\tilde{\cal{B}}(g_0,\varepsilon))}{b_{\rank}(\varepsilon)}\vol_M(\Theta\cap M_\Ga) (1+O(\varepsilon)) +o_T(1)\right)\frac{e^{\delta_\v T}}{ T^{(d-1)/2}}
		\end{multline*}
		where $b_r(\varepsilon)$ denotes the volume of the Euclidean $\rank$-ball of radius $\varepsilon$.
	\end{proposition}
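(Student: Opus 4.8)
The plan is to deduce \cref{prop:CountingInVT} from \cref{prop:STAsymptotic} by a flow‑box straightening argument, exactly as in \cite[Section 5]{CO23} for tubes, using \cref{prop:STAsymptotic} — which already contains the hypertube asymptotic of \cref{lem:Asymptotic} — as a black box. Writing $\cal B(g_0,\e)=g_0\cal B_0(\e)$ with $\cal B_0(\e)=(\check N_\e N\cap N_\e\check NAM)M_\e A_\e$, conjugation by $g_0$ gives
\[
\#(\Gamma\cap\cal V_{T,b}(g_0,\e))=\#\bigl((g_0^{-1}\Gamma g_0)\cap\cal B_0(\e)\exp(\T_{T,b}(\v))\,\Theta\,\cal B_0(\e)^{-1}\bigr),
\]
and $g_0^{-1}\Gamma g_0$ is again a Zariski dense Anosov subgroup sharing the limit cone, growth indicator and tangent forms $\psi_\v$ of $\Gamma$, so \cref{prop:STAsymptotic} applies to it. (Up to $O(\e)$‑perturbations one may equivalently replace $\cal V_{T,b}(g_0,\e)$ by the honest‑conjugate set $\cal W_{T,b}(g_0,\e)$; I will work with $\cal V$.)

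First I would prove the straightening lemma, the analogue of the corresponding step of \cite[Section 5]{CO23}: for all small $\e>0$ there are a null‑boundary set $\sQ^\pm_\e\subset\ker\psi_\v$ with $\sQ^-_\e\subset\sQ\subset\sQ^+_\e$, continuous $b^\pm_\e$ on $\sQ^\pm_\e$ with $\sup|b^\pm_\e-b|=O(\e)$, convex cones $\cal C^-_\e\subset\cal C\subset\cal C^+_\e$ with $\cal C^\pm_\e\to\cal C$, and Borel sets $\Theta^-_\e\subset\Theta\subset\Theta^+_\e$ in $M$ with $\vol_M(\Theta^+_\e-\Theta^-_\e)\to 0$, such that, up to a bounded subset of $G$ independent of $T,\e$, the set $\cal B_0(\e)\exp(\T_{T,b}(\v))\Theta\cal B_0(\e)^{-1}$ is sandwiched between $\check N_{O(\e)}\exp(\T^-_{T,b^-_\e}(\v))\Theta^-_\e N_{O(\e)}$ and $\check N_{O(\e)}\exp(\T^+_{T,b^+_\e}(\v))\Theta^+_\e N_{O(\e)}$, where $\T^\pm_{T,b^\pm_\e}(\v)$ is the $\v$‑truncation of the hypertube $\T(\sQ^\pm_\e,V,\cal C^\pm_\e)$. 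The mechanism is the one behind \cref{def:FlowBox}: an element of $\cal B_0(\e)\exp(X)m\,\cal B_0(\e)^{-1}$ equals $\check n_1 n_1 m_1 a_1\exp(X)m\,a_2^{-1}m_2^{-1}n_2^{-1}\check n_2^{-1}$ with all $N,\check N$‑factors in $\e$‑balls and $a_i\in A_\e$, $m_i\in M_\e$; the central $a_i$ commute past $\exp(X)$ and only replace $X$ by $X+Y$ with $|Y|=O(\e)$; since the limit cone lies in $\inte\fa^+\cup\{0\}$, $X\in\T_{T,b}(\v)\subset\cal C$ lies deep in $\fa^+$, so pushing $n_1$ to the right of $\exp(X)m$ and $\check n_2$ to its left contracts them to size $O(\e\,e^{-c\|X\|})$; and $m_1,m_2$ alter $m$ within an $M_{O(\e)}$‑neighborhood of the $M$‑conjugates of $m$ (whence $\Theta^\pm_\e$, using $\partial\Theta$ null). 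Decomposing $Y$ along $\fa=W\oplus(V\cap\ker\psi_\v)\oplus\R\v$, the $W$‑part moves the $\sQ$‑coordinate ($\partial\sQ$ null, hence harmless), the $(V\cap\ker\psi_\v)$‑part is absorbed by the truncation, and the $\R\v$‑part changes $b$ by $O(\e)$; this is exactly what the passage to $\sQ^\pm_\e,b^\pm_\e,\cal C^\pm_\e$ accommodates, and the fact that the $\e$‑neighborhood of $\cal C$ is not a cone is irrelevant because by \cref{lem:Asymptotic} the asymptotic in \cref{prop:STAsymptotic} is independent of which cone containing $\v$ in its interior is used.

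Given the straightening lemma, I would apply \cref{prop:STAsymptotic} to $g_0^{-1}\Gamma g_0$ with $\check\Xi=\check N_{O(\e)}$, $\Xi=N_{O(\e)}$, $\Theta=\Theta^\pm_\e$, incorporating the case $M_\Gamma\ne M$ exactly as in \cite[Proposition 5.12]{CF23} (only holonomies in $M_\Gamma$ occur, so $\vol_M(\Theta^\pm_\e)$ is replaced by $\vol_M(\Theta^\pm_\e\cap M_\Gamma)$). This gives, for the upper and lower bound respectively, that $\#(\Gamma\cap\cal V_{T,b}(g_0,\e))$ equals $(1+o_T(1))\,c(\T^\pm_\e,\v,b^\pm_\e)\,\tilde{\nu}^{g_0}_\v(\check N_{O(\e)})\,\tilde{\nu}^{g_0}_{\involution(\v)}(N_{O(\e)}^{-1})\,\vol_M(\Theta^\pm_\e\cap M_\Gamma)\,e^{\delta_\v T}T^{-(d-1)/2}$, where $\tilde{\nu}^{g_0}$ are the conformal‑density measures of $g_0^{-1}\Gamma g_0$. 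Then I would do the measure bookkeeping: by the Hopf parametrization \eqref{hopf}, in $\check N\times N\times A$‑coordinates the measure $\BMS$ of the flow box factors into $\tilde{\nu}^{g_0}_\v(\check N_{O(\e)})\,\tilde{\nu}^{g_0}_{\involution(\v)}(N_{O(\e)}^{-1})$ along the transverse directions times a Lebesgue factor $b_{\rank}(\e)(1+O(\e))$ along $A_\e$, and $\BMS(\tilde{\cal B}(g_0,\e))$ computed for $\Gamma$ agrees with the same quantity for $g_0^{-1}\Gamma g_0$ at $e$ by conjugation‑equivariance; hence $\tilde{\nu}^{g_0}_\v(\check N_{O(\e)})\,\tilde{\nu}^{g_0}_{\involution(\v)}(N_{O(\e)}^{-1})=\BMS(\tilde{\cal B}(g_0,\e))/b_{\rank}(\e)\cdot(1+O(\e))$. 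Combining this with $c(\T^\pm_\e,\v,b^\pm_\e)=c(\T,\v,b)(1+O(\e))$ (continuity of $c$ in $b$ and $\sQ$, using $\partial\sQ$ null) and $\vol_M(\Theta^\pm_\e\cap M_\Gamma)=\vol_M(\Theta\cap M_\Gamma)+O(\e)$, and letting the two estimates sandwich the count, yields the asserted formula, the $O(\e)$ collecting the perturbation errors and the additive $o_T(1)$ being the quantitative content of the $\sim$ in \cref{prop:STAsymptotic}.

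The main obstacle will be the straightening lemma — verifying that conjugation by a flow box turns the truncated \emph{hypertube} $\exp(\T_{T,b}(\v))\Theta$ into a product set $\check N_{O(\e)}\exp(\T^\pm_{T,b^\pm_\e}(\v))\Theta^\pm_\e N_{O(\e)}$ with controlled, cone‑robust perturbations, and that the resulting transverse measure factors cleanly as $\BMS(\tilde{\cal B}(g_0,\e))/b_{\rank}(\e)$ times $\vol_M(\Theta\cap M_\Gamma)$. This is routine for tubes; the only genuinely new point is that the positive‑dimensional freedom $V\cap\ker\psi_\v$ inside a hypertube truncation interacts harmlessly with the flow‑box perturbation, which is precisely the content of the $W\oplus(V\cap\ker\psi_\v)\oplus\R\v$ decomposition of the $O(\e)$‑error above.
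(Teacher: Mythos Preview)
Your proposal is correct and follows essentially the same approach as the paper: the paper's proof simply cites \cite[Proposition 5.6]{CO23}, indicating that one approximates $\cal{V}_{T,b}(e,\varepsilon)$ by sets of the $S_{T,b}$ form (cf.\ \cite[Lemma 5.7]{CO23}) and then invokes \cref{prop:STAsymptotic} in place of \cite[Proposition 5.1]{CO23}. Your straightening lemma, application of \cref{prop:STAsymptotic} to $\check N_{O(\e)}$, $N_{O(\e)}$, $\Theta^\pm_\e$, and the BMS--flow-box measure bookkeeping are exactly the content of that citation unpacked, including the observation that the cone $\cal C$ is immaterial for the asymptotic.
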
 
	
	\begin{proof}
		The proof of \cref{prop:CountingInVT} is the same as in \cite[Proposition 5.6]{CO23} after approximating $\cal{V}_{T,b}(e,\varepsilon)$ with $S_{T,b}$ (cf. \cite[Lemma 5.7]{CO23}) and replacing \cite[Proposition 5.1]{CO23} with \cref{prop:STAsymptotic} for the asymptotic.
	\end{proof}
	
	\subsection*{Proof of \cref{thm:JointEquidistribution}}
	
	For $T>0$, let $\eta_T$ denote the Radon measure on $\Ga \ba G/M \times [M]$ defined by the left hand side in \cref{thm:JointEquidistribution}, that is, for $f \in \mathrm{C}_{\mathrm{c}}(\Ga\ba G/M)$ and $\varphi \in \mathrm{Cl}(M)$, let 
	
	\begin{equation*}
		\eta_{T}(f\otimes\varphi)=\sum_{
			[\gamma] \in [\primGamma],\, \lambda(\gamma) \in \T_{T,b}(\v)} \int_{C_\ga} f \cdot \varphi(m(\gamma)).
	\end{equation*} 
	By \cite[Lemma 6.3]{CF23} for all sufficiently large $T$, we have
	$$\eta_{T}(\tilde{\cal{B}}(g_0,\varepsilon)\otimes\Theta)=b_{\rank}(\varepsilon)\cdot\#(\primGamma\cap\cal{W}_{T,b}(g_0,\varepsilon)).$$
	By the Anosov property, we have $\limitcone \subset \interior\LieA^+\cup\{0\}$ so we can apply a closing Lemma \cite[Lemma 2.7]{CF23} and approximate $\cal{W}_{T,b}(g_0,\varepsilon)$ using $\cal{V}_{T,b}(g_0,\varepsilon)$ (cf. \cite[Lemma 6.5]{CO23}) and use \cref{prop:CountingInVT} to obtain (cf. \cite[Proposition 5.13]{CO23}):
	\begin{multline*}
		\eta_{T}(\tilde{\cal{B}}(g_0,\varepsilon)\otimes\Theta)
		\\
		\sim c(\T,\v,b) \left( \BMS(\tilde{\cal{B}}(g_0,\varepsilon))\vol_M(\Theta) (1+O(\varepsilon)) \right)\frac{e^{\delta_\v T}}{T^{(d-1)/2}} \text{ as } T \to \infty.
	\end{multline*}
	A partition of unity argument now completes the proof of \cref{thm:JointEquidistribution}.

	\subsection*{Counting Jordan projections in hypertubes}	
	
	We deduce from \cref{thm:JointEquidistribution} the following asymptotic for counting Jordan projections of $\Gamma$ in $\T_{T,b}$ with holonomies in a given subset of $M$ which we will apply to correlations in \cref{sec:ProofOfCorrelations}: 
	
	\begin{theorem}
		\label{thm:JordanCounting}
		For any conjugation invariant Borel subset $\Theta \subset M$ with $\vol_M(\partial\Theta) = 0$, we have as $T \to \infty$,
		\begin{multline*}  
		\#\{[\gamma] \in [\Gamma]: \lambda(\gamma) \in \T_{T,b}(\v), \, m(\gamma) \in \Theta\} \\ \sim c(\T,\v,b) \cdot |m_{\cal{X}_\v}| \cdot \vol_M(\Theta\cap M_\Gamma)\cdot \frac{e^{\delta_\v T}}{T^{(d+1)/2}}
        \end{multline*}
        where $m_{{\mathcal X}_{\v}}$ is the finite measure in \eqref{mcal}.
	\end{theorem}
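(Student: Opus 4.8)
The plan is to deduce \cref{thm:JordanCounting} from the closed‑orbit equidistribution of \cref{thm:JointEquidistribution} by the standard passage from ``equidistribution of closed $AM$‑orbits weighted by $\int_{C_\gamma}f$'' to ``counting of conjugacy classes'', exactly as in \cite[Section 5]{CO23} (see also \cite[Section 6]{CF23}); the only genuinely new input is \cref{thm:JointEquidistribution} itself, so most of the argument can be imported verbatim. First I reduce to primitive elements: if $\gamma=\gamma_0^k$ with $\gamma_0\in\primGamma$ and $k\ge 2$, then $\lambda(\gamma)=k\lambda(\gamma_0)$, so $\lambda(\gamma)\in\T_{T,b}(\v)$ forces $\lambda(\gamma_0)$ into a hypertube truncation of $\v$‑height $O(T/k)$, whose total contribution is $O(e^{\delta_\v T/2}\,\mathrm{poly}(T))$ after summing over $k\ge 2$ (using the one‑sided bound that the count in $\T_{s,b}(\v)$ is at most $e^{\delta_\v s}$ up to polynomial factors, which is already available from \cref{prop:STAsymptotic}). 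Hence it suffices to count $[\gamma]\in[\primGamma]$, and since $m(\gamma_0^k)=m(\gamma_0)^k$ this is compatible with the holonomy constraint.

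The heart of the proof is the relation between the honest count and the $\int_{C_\gamma}f$‑weighted sum. I would cover a large compact piece of $\Ga\ba G/M$ by $\varepsilon$‑flow boxes, fix a subordinate partition of unity, and use that for $f$ supported in one $\varepsilon$‑flow box, $\int_{C_\gamma}f$ equals, up to a factor $1+O(\varepsilon)$, the $A$‑volume of the part of the closed orbit $C_\gamma$ lying in that box --- this is the mechanism already underlying \cref{prop:CountingInVT} and \cref{thm:JointEquidistribution}. Summing over the partition and unfolding as in \cite[Proposition 5.6, Lemma 5.7]{CO23}, the quantity $\int_{C_\gamma}f$ contributes, on aggregate over $[\gamma]$ with $\lambda(\gamma)\in\T_{T,b}(\v)$, like $\tfrac{\BMS(f)}{|m_{\cal X_\v}|}$ times the ``$\v$‑height'' $s(\gamma)$ of $\lambda(\gamma)$ in the truncation, namely the number $s(\gamma)\in[0,\,T+b(\sf{q})]$ with $\lambda(\gamma)=\sf{q}+\v'+s(\gamma)\v$ for $\sf{q}\in\sQ$ and $\v'\in V\cap\ker\psi_\v$; this reflects that $C_\gamma$ is a closed $A$‑orbit whose effective length along the direction $\v$ is $s(\gamma)$, and that long closed $A$‑orbits equidistribute toward the finite measure $dm_{\cal X_\v}\,d\u = d\BMS$. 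Writing $N_\phi(s)=\sum_{[\gamma]\in[\primGamma],\,\lambda(\gamma)\in\T_{s,b}(\v)}\phi(m(\gamma))$ for $\phi\in\mathrm{Cl}(M)$, a partial summation in the height parameter then converts the weight $s(\gamma)$ into $1$ at the cost of an extra factor $\sim T$ --- precisely $\int_0^T s\,dN_\phi(s)\sim T\,N_\phi(T)$ once one knows that $N_\phi(s)$ grows like $e^{\delta_\v s}/s^{(d+1)/2}$, which is bootstrapped by letting $f$ approximate $\mathbf{1}$ on an exhausting family of flow‑box unions and sending $\varepsilon\to 0$. Combining this with \cref{thm:JointEquidistribution} and with $\int_{M_\Ga}\mathbf{1}_\Theta\,dm = \vol_M(\Theta\cap M_\Ga)$ (taking $\phi=\mathbf{1}_\Theta$), and bookkeeping the constants --- $\BMS(f)$ is replaced by $|m_{\cal X_\v}|$ via $d\BMS=dm_{\cal X_\v}\,d\u$, the extra $T$ comes from the partial summation, and the $\delta_\v^{-1}$ already present in $c(\T,\v,b)$ in \eqref{eqn:c} matches the normalization of the $\v$‑flow on $\cal X_\v$ --- yields the stated asymptotic with constant $c(\T,\v,b)\,|m_{\cal X_\v}|\,\vol_M(\Theta\cap M_\Gamma)$.

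Finally, since $\Theta$ is only assumed to be a conjugation‑invariant Borel subset with $\vol_M(\partial\Theta)=0$, one approximates $\mathbf{1}_\Theta$ from above and below by continuous class functions $\phi^\pm\in\mathrm{Cl}(M)$ with $\int_M(\phi^+-\phi^-)\,dm$ arbitrarily small, applies the previous step to $\phi^\pm$, and squeezes. The main obstacle is the middle step: one must carry out the flow‑box covering and the partial summation \emph{uniformly} in the family of truncations $\T_{T,b}(\v)$, and in particular justify that the contribution concentrates, to leading order, on conjugacy classes with $\lambda(\gamma)$ within an $O(\sqrt{T})$‑window of the ray $T\v$, so that the ``$\v$‑height'' $s(\gamma)$ is uniformly the correct substitute for $\int_{C_\gamma}f$. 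This concentration is the dynamical counterpart of the Gaussian factor $e^{-I(\u)}$ already extracted from local mixing in \cref{lem:Asymptotic}, and propagating it through the flow‑box counting is exactly where the hypertube geometry (as opposed to the one‑dimensional tube case of \cite{CO23}) makes the argument more delicate.
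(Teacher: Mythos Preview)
Your strategy differs from the paper's and, as written, has a gap. The paper does not cover by flow boxes and perform a partial summation. Instead it first records a \emph{renormalized} version of \cref{thm:JointEquidistribution} in which each term is divided by $\psi_\v(\lambda(\gamma))$ (this is what the arguments of \cite[Section~5]{CO23} actually give, with the exponent $(d+1)/2$ on the right), and then chooses a \emph{single} test function adapted to the product structure $\supp\BMS \cong \cal{X}_\v \times \ker\psi_\v$: namely $f=\mathbbm{1}_{\cal{X}_\v}\otimes f_1$ with $f_1\in C_c(\ker\psi_\v)$ and $\int f_1\,d\u=1$. For this $f$ one has the exact identity $\int_{C_\gamma} f = \psi_\v(\lambda(\gamma))$ for every primitive $\gamma$ (the closed $A$-orbit projects to a circle of length $\psi_\v(\lambda(\gamma))$ in $\cal{X}_\v$ and is all of $\ker\psi_\v$ in the other factor), so the renormalized sum collapses termwise to the unweighted count, and $\BMS(f)=|m_{\cal{X}_\v}|$ gives the constant. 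No Abel summation, no exhaustion, no $O(\varepsilon)$ approximations are needed at this stage.

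Your proposed route has two concrete problems. First, the partial summation is stated backwards: you write $\int_0^T s\,dN_\phi(s)\sim T\,N_\phi(T)$ ``once one knows'' the growth of $N_\phi$, but that growth is the conclusion. The non-circular direction is to start from the weighted asymptotic $F(T)=\sum_{s(\gamma)\le T} s(\gamma)\,\phi(m(\gamma))$ and use Stieltjes integration by parts $N_\phi(T)=\int_0^T s^{-1}\,dF(s)=F(T)/T+\int_0^T F(s)\,s^{-2}\,ds$; this works, but only after you have produced $F$. Second, and more seriously, a flow-box partition of unity does not produce the weight $s(\gamma)$ (equivalently $\psi_\v(\lambda(\gamma))$): for $f$ supported in an $\varepsilon$-box, $\int_{C_\gamma} f$ is $O(\varepsilon^r)$ times the number of local crossings and carries no global information about the $\v$-period of $C_\gamma$. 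Moreover, ``letting $f$ approximate $\mathbf{1}$ on an exhausting family'' is ill-posed since $\supp\BMS\cong\cal{X}_\v\times\ker\psi_\v$ is non-compact, so $\int_{C_\gamma} f\to\infty$ termwise. The missing idea is precisely the product-form test function $\mathbbm{1}_{\cal{X}_\v}\otimes f_1$; once you use it, the whole flow-box/partial-summation machinery becomes unnecessary.
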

	
	\begin{proof} 
		Using the same arguments as in \cite[Section 5]{CO23}, the following renormalized version of \cref{thm:JointEquidistribution} can be deduced:
		For any $f \in C_\mathrm{c}(\Ga\ba G/M)$ and for any $\phi \in \mathrm{Cl}(M)$, we have as $T\to \infty$,
		\begin{multline}
			\label{re}
			\sum_{[\gamma] \in [\primGamma],\, \lambda(\gamma) \in \T_{T,b}(\v)} \frac{1}{\psi_\v(\lambda(\gamma))}\int_{C_\ga} f \; \cdot \phi(m(\gamma)) 
			\\
			\sim c(\T,\v,b)\cdot \BMS(f)\cdot\int_{M_\Ga}\phi \, dm \cdot \frac{e^{\delta_\v T}}{T^{(d-1) /2}}.
		\end{multline}
		\cref{thm:JordanCounting} follows from \eqref{re} using similar arguments as in \cite[Corollary 4.3]{CO23} which we outline.
		Using the product structure $\supp\BMS \cong \cal{X}_\v \times \ker\psi_\v$, choose $$f= \mathbbm{1}_{{\cal{X}_\v}} \otimes f_1 \in C_{\mathrm{c}}(\supp\BMS)$$ 
		where $f_1 \in C_{\mathrm{c}}(\ker\psi_\v)$ with $\int f_1(\u)\, d\u=1$. Then
		$$\BMS(f) = |m_{\cal{X}_\v}|\int f_1(\u)\, d\u = |m_{\cal{X}_\v}|.$$
		Using the product structure of $\BMS$ for Anosov subgroups \eqref{eqn:BMSProduct}, we have 
		$$\int_{C_\ga} f = \psi_\v(\lambda(\ga))\int f_1(\u)\, d\u = \psi_\v(\lambda(\ga))$$
		for every $[\ga] \in [\primGamma]$. By applying \eqref{re} to this function $f$ and $\phi = \mathbbm{1}_{\Theta}$ (using a standard partition of unity argument),  we obtain 
		\begin{multline}
			\label{eqn:Prim}
			\#\{[\gamma]\in[\primGamma]:\lambda(\gamma) \in \T_{T,b}(\v), \, m(\gamma) \in \Theta\} 
			\\
			\sim c(\T,\v,b) \cdot |m_{\mathcal{X}_\v}| \cdot \vol_M(\Theta\cap M_\Gamma)\cdot \frac{e^{\delta_\v T}}{T^{(d+1)/2}}.
		\end{multline}
		By an elementary argument, \eqref{eqn:Prim} remains true if $\primGamma$ is replaced with $\Gamma$. 
	\end{proof}

	\subsection*{Equidistribution in bisectors}
	For a pair of Borel subsets $\Xi_1, \Xi_2 \subset K$  such that $\Xi_1$ and $\Xi_2^{-1}$ are right $M$-invariant, consider the following subsets of $G=KA^+K$ where the $A^+$ component is given by the truncations $\T_{T,b}(\v)$: for $T>0$, set
	$$B(\T_{T,b}(\v), \Xi_1,\Xi_2) = \Xi_1\exp(\T_{T,b}(\v))\Xi_2 \subset G.$$
	\cref{thm:BisectorEquidistribution} describes the equidistribution of $\Gamma$ in {\textit{bisectors}} $B(\T_{T,b}(\v), \Xi_1,\Xi_2)$. 	 	
	
	\begin{theorem}
		\label{thm:BisectorEquidistribution}
		Suppose $\nu_\v(\partial\Xi_1) = \nu_{\involution(\v)}(\partial\Xi_2^{-1}) = 0$. Then we have 
		$$\# (\Gamma \cap B(\T_{T,b}(\v), \Xi_1,\Xi_2)) \sim c(\T,\v,b)\cdot\nu_\v(\Xi_1)\cdot\nu_{\involution(\v)}(\Xi_2^{-1})\cdot\frac{e^{\delta_\v T}}{T^{(d-1)/2}} \quad \text{as } T\to \infty.$$
		In particular,
		$$\#\{\gamma \in \Gamma: \mu(\gamma) \in \T_{T,b}(\v)\} \sim c(\T,\v,b)\cdot\frac{e^{\delta_\v T}}{T^{(d-1)/2}} \quad \text{as } T\to \infty.$$   
	\end{theorem}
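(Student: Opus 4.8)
The plan is to prove \cref{thm:BisectorEquidistribution} by the same matrix–coefficient counting scheme used for the $\check{N}AMN$–count in \cref{prop:STAsymptotic}, but carried out in the Cartan decomposition $G=KA^{+}K$ rather than in Bruhat coordinates; this is the direct analogue of the bisector equidistribution for tubes in \cite[Section 5]{CO23}, and the only genuinely new analytic inputs are the integral asymptotic \cref{lem:Asymptotic} and the hypertube approximation \cref{lem:STpmBounds}. First I would run the usual smoothing sandwich: for small $\e>0$, put $B_{T}=\Xi_{1}\exp(\T_{T,b}(\v))\Xi_{2}$, set $B_{T,\e}^{\pm}=\bigcup/\bigcap_{g_{1},g_{2}\in G_{\e}}g_{1}B_{T}g_{2}$, and take bump functions $\Psi_{\e}\in C_{c}^{\infty}(\Ga\ba G)$ exactly as in the proof of \cref{prop:STAsymptotic}; then $\#(\Ga\cap B_{T})$ is squeezed between $\int_{\Ga\ba G\times\Ga\ba G}F_{B_{T,\e}^{\mp}}(x,y)\Psi_{\e}(x)\Psi_{\e}(y)\,dx\,dy$. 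Pushing the fattening $G_{\e}$'s into the $K$– and $A^{+}$–factors shows that, up to a bounded set independent of $\e$ and $T$, $B_{T,\e}^{\pm}$ is contained in (resp.\ contains) a set of the form $\Xi_{1,\e}^{\pm}\exp\!\big(\T^{\pm}_{T,b^{\pm}_{\e}}(\v)\big)\Xi_{2,\e}^{\pm}$, where $\Xi_{i,\e}^{-}\subset\Xi_{i}\subset\Xi_{i,\e}^{+}$ are Borel in $K$ and $\T_{\e}^{\pm}$ are genuine hypertubes whose cones $O(\e)$–approximate $\cal C$. This is the only place \cref{lem:STpmBounds} is used: the $\e$–neighborhood of a hypertube is not a hypertube (the $\e$–neighborhood of the cone $\cal C$ is not a cone), but it is sandwiched by hypertubes with slightly enlarged or shrunk cones, and the $K$–pieces satisfy $\nu_{\v}(\Xi_{1,\e}^{+})-\nu_{\v}(\Xi_{1,\e}^{-})\to 0$ and $\nu_{\involution(\v)}((\Xi_{2,\e}^{+})^{-1})-\nu_{\involution(\v)}((\Xi_{2,\e}^{-})^{-1})\to 0$ as $\e\to 0$.

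Next I would unfold $F_{B}$ and decompose Haar measure in Cartan coordinates. Writing $dg=J_{G}(H)\,dk_{1}\,dH\,dk_{2}$ on $KA^{+}K$, where $J_{G}(H)=(1+o(1))\,\mathsf{c}_{G}\,e^{2\varrho(H)}$ for a constant $\mathsf{c}_{G}>0$ as $H\to\infty$ inside $\inte\fa^{+}$ (hence uniformly over $H$ in the limit cone, by the Anosov property, $2\varrho$ being the sum of the positive roots with multiplicity as in \cref{thm:DecayofMatrixCoefficients}), the smoothed integral becomes, up to a factor $1+O(\e)$, an integral over $(k_{1},H,k_{2})\in\Xi_{1}\times\T_{T,b}(\v)\times\Xi_{2}$ of $J_{G}(H)$ times a matrix coefficient $\int_{\Ga\ba G}\phi^{(k_{2})}_{\e}(x\,a_{H})\,\phi^{(k_{1})}_{\e}(x)\,dx$, where $\phi^{(k_{i})}_{\e}\in C_{c}^{\infty}(\Ga\ba G)$ are right $K$–translates of $\Psi_{\e}$. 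Now apply \cref{thm:DecayofMatrixCoefficients}: after the change of variables $H=t\v+\sqrt{t}\,\u$ with $\u\in\ker\psi_{\v}$ (so $dH=c_{0}\,t^{(\rank-1)/2}\,dt\,d\u$ for some $c_{0}>0$), the matrix coefficient equals $(1+o(1))\,\kappa_{\v}\,e^{-I(\u)}\big(t^{(\rank-1)/2}\,e^{(2\varrho-\psi_{\v})(H)}\,|m_{\cal{X}_\v}|\big)^{-1}\sum_{Z}\BR|_{Z\check{N}}(\cdots)\,\BRstar|_{ZN}(\cdots)$, and the uniform domination in \cref{thm:DecayofMatrixCoefficients} lets us replace the matrix coefficient by its main term with controlled error and pass to the limit. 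The Jacobian $J_{G}(H)$ cancels the $e^{2\varrho(H)}$ in the denominator, the factor $t^{(\rank-1)/2}$ cancels against $dH$, and $\psi_{\v}(H)=t\psi_{\v}(\v)+\sqrt{t}\,\psi_{\v}(\u)=t\delta_{\v}$ because $\psi_{\v}(\v)=\psi_{\Ga}(\v)=\delta_{\v}$ and $\u\in\ker\psi_{\v}$. Hence the $H$–integral collapses, up to constants absorbed into $c(\T,\v,b)$, to
\[
\frac{\kappa_{\v}}{|m_{\cal{X}_\v}|}\int_{\,t\v+\sqrt{t}\u\,\in\,\T_{T,b}(\v)}e^{\delta_{\v}t}\,e^{-I(\u)}\,dt\,d\u=L(\T_{T,b}(\v)),
\]
which by \cref{lem:Asymptotic} is $\sim c(\T,\v,b)\,e^{\delta_{\v}T}/T^{(d-1)/2}$.

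It remains to identify the $K$–integrals with the conformal measures. The $k_{1}$– and $k_{2}$–integrations over $\Xi_{1}$ and $\Xi_{2}$, combined with the $\check{N}$– and $N$–integrations implicit in $\BR,\BRstar$ and with the relation of these Burger--Roblin measures to the $(\Ga,\psi_{\v})$– and $(\Ga,\psi_{\involution(\v)})$–conformal measures on $\cal F\simeq K/M$ through the Hopf parametrization \eqref{hopf}, evaluate exactly to $\nu_{\v}(\Xi_{1})\,\nu_{\involution(\v)}(\Xi_{2}^{-1})$, using the right $M$–invariance of $\Xi_{1}$ and $\Xi_{2}^{-1}$ and their $\nu$–null boundaries — precisely as in \cite[Section 5]{CO23}; when $M_{\Ga}\ne M$ the sum over the $A$–ergodic components $Z$ of $\BMS$ is accounted for exactly as in \cref{prop:STAsymptotic}. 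Collecting the pieces and letting $\e\to 0$ (using the null‑boundary hypothesis on $\Xi_{1},\Xi_{2}$ and \cref{lem:STpmBounds}) yields
\[
\#(\Ga\cap B(\T_{T,b}(\v),\Xi_{1},\Xi_{2}))\sim c(\T,\v,b)\,\nu_{\v}(\Xi_{1})\,\nu_{\involution(\v)}(\Xi_{2}^{-1})\,\frac{e^{\delta_{\v}T}}{T^{(d-1)/2}}.
\]
For the final assertion one takes $\Xi_{1}=\Xi_{2}=K$: then $B(\T_{T,b}(\v),K,K)=\{g\in G:\mu(g)\in\T_{T,b}(\v)\}$ and $\nu_{\v}(K/M)=\nu_{\involution(\v)}(K/M)=1$, these being probability measures on $\cal F$.

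I expect the main obstacle to be essentially invisible at this stage, since it is already resolved by \cref{lem:Asymptotic}: all the genuinely higher‑rank difficulty lies in that lemma, where one splits $\u$ along $V\cap\ker\psi_{\v}$ and its complement $W$ and shows that the transverse contributions from $W\setminus\sQ$ and from $\hull(\sQ\cup\{0\})\setminus\sQ$ are of strictly lower order, via the at‑least‑quadratic, resp.\ at‑least‑linear, radial growth of $I$ and of the auxiliary function appearing there. At the level of the present theorem the only remaining subtlety is Step~1, namely bounding $\e$–neighborhoods of hypertubes by honest hypertubes with slightly perturbed cones; since \cref{lem:Asymptotic}, and hence the constant $c(\T,\v,b)$, is insensitive to the particular cone $\cal C\ni\v$ (only its interior enters), this perturbation is harmless and the $\e\to 0$ passage goes through just as in the tube case of \cite{CO23}.
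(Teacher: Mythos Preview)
Your proposal is correct and follows essentially the same approach as the paper, which simply refers to \cite[Theorem~6.1]{CO23} for the tube case and notes that the only new ingredients for hypertubes are \cref{lem:Asymptotic} and the approximation \cref{lem:STpmBounds}. One minor point: the bisector argument in \cite{CO23} is carried out in Section~6 (not Section~5), so your cross-references should point there.
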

	
	\begin{proof}
		\cref{thm:BisectorEquidistribution} was proved in \cite[Theorem 6.1]{CO23} for tubes rather than hypertubes. The proof of \cref{thm:BisectorEquidistribution} is the same as in those cases using hypertubes instead and the asymptotic from \cref{lem:Asymptotic}.
	\end{proof}

	\section{Multiple correlations for Anosov representations}
	\label{sec:ProofOfCorrelations}
	\label{sec:Applications}
	
	In this section, we obtain multiple correlations for Jordan and Cartan projections of a Zariski dense Anosov subgroup, using the Basic \cref{lem:Hypertube} and counting results for Jordan and Cartan projections in hypertubes (\cref{thm:JordanCounting,thm:BisectorEquidistribution}).  
	We then deduce \cref{thm:Correlations} from this result and highlight particular instances of our correlation theorems for geometric structures arising from examples of Anosov representations.

	\subsection*{Linearly correlated Jordan and Cartan projections}
	
	Let $\Gamma$ be a Zariski dense Anosov subgroup of $G$ with limit cone $\limitcone$. Fix any integer $d$ with $1 \le d \le \rankG$ and any surjective linear map 
	$$\bs{\varphi} = (\varphi_1,\dots,\varphi_d):\LieA \to \R^d \text{ such that } \bs{\varphi}|_{\limitcone} \text{ is proper}.$$ 
	By \cref{lem:ProjectedCone}, $\limitcone_{\bs{\varphi}} = \bs{\varphi}(\limitcone)$ is a closed convex cone with nonempty interior. 
	
	\begin{theorem}
		\label{thm:Counting}
		For any $\bs{r}=(r_1, \cdots, r_d) \in \interior\limitcone_{\bs{\varphi}}$, any $\e_1,\dots,\e_d > 0$ and any conjugation invariant Borel subset $\Theta < M_\Gamma$ with $\vol_{M_\Gamma}(\partial\Theta)=0$, we have as $T \to \infty$,
		\begin{multline}\label{f}
			\#\{[\gamma] \in [\Gamma] : \, \bv(\lambda(\gamma)) \in \prod_{i=1}^d[r_iT,  r_iT+\e_i],\;  m(\gamma) \in \Theta\} 
			\\
			\sim c\cdot\frac{e^{\psi_\Gamma(\v^{\star}) T}}{T^{(d+1)/2}}\vol_{M_\Gamma}(\Theta)
		\end{multline}
		and
		\be\label{s} \#\{\gamma \in \Gamma : \, \bv(\mu(\gamma)) \in \prod_{i=1}^d[r_iT,  r_iT+\e_i]\}  \sim c' \cdot c\cdot\frac{e^{\psi_\Gamma(\v^{\star})T}}{T^{(d-1)/2}}\ee 
		where $c = c(\Gamma, \bs{\varphi}, \bs{r}, \e_1,\dots,\e_d)$ and $c' = c'(\Gamma,\bs{\varphi},\bs{r})$ are positive constants and $\v^{\star} \in \interior\limitcone$ is the unique $(\bs{\varphi},\bs{r})$-critical vector of $\Gamma$. 
	\end{theorem}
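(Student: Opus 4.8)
The plan is to reduce the count over the prisms $\bs{\varphi}^{-1}\bigl(\prod_i[r_iT,r_iT+\e_i]\bigr)$ to a difference of counts in $\v^{\star}$-truncations of a single hypertube, and then invoke \cref{thm:JordanCounting} and \cref{thm:BisectorEquidistribution}. Since $\Gamma$ is Anosov, fix a closed convex cone $\cal{C}$ with nonempty interior such that $\limitcone-\{0\}\subset\interior\cal{C}$ and $\cal{C}-\{0\}\subset\interior\LieA^+$. Let $\v^{\star}=\v^{\star}(\bs{\varphi},\bs{r})\in\interior\limitcone$ be the $(\bs{\varphi},\bs{r})$-critical vector of $\Gamma$ provided by \cref{lem:CriticalVector}, and apply the Basic \cref{lem:Hypertube}: taking $V=\R\v^{\star}\oplus\ker\bs{\varphi}$, a complement $W<\ker\psi_{\v^{\star}}$ to $\ker\bs{\varphi}$, and $\sQ=W\cap\bs{\varphi}^{-1}\bigl(\prod_i[0,\e_i]\bigr)$, the set $\T=\T(\sQ,V,\cal{C})$ is a hypertube of $\limitcone$ and there are functions $b_2\le b_1$ in $C(\sQ)$ with
$$\cal{C}\cap\bs{\varphi}^{-1}\Bigl(\prod_{i=1}^d[r_iT,r_iT+\e_i]\Bigr)=\T_{T,b_1}(\v^{\star})-\T_{T,b_2}(\v^{\star})\qquad(T>0).$$
Since $\dim V=\rankG+1-d$, the counting results of \cref{sec:EquidistributionStatements} applied to these $\v^{\star}$-truncations produce the exponent $\delta_{\v^{\star}}=\psi_\Gamma(\v^{\star})$ and the polynomial factors $T^{(d+1)/2}$ and $T^{(d-1)/2}$ appearing in \eqref{f} and \eqref{s}.

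For \eqref{f}: every nontrivial $\gamma$ satisfies $\lambda(\gamma)\in\limitcone-\{0\}\subset\interior\cal{C}$, so $\bs{\varphi}(\lambda(\gamma))\in\prod_i[r_iT,r_iT+\e_i]$ holds precisely when $\lambda(\gamma)\in\T_{T,b_1}(\v^{\star})-\T_{T,b_2}(\v^{\star})$, and since $\T_{T,b_2}(\v^{\star})\subset\T_{T,b_1}(\v^{\star})$ the left-hand side of \eqref{f} equals the difference of the two counts $\#\{[\gamma]:\lambda(\gamma)\in\T_{T,b_j}(\v^{\star}),\,m(\gamma)\in\Theta\}$, $j=1,2$. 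Now apply \cref{thm:JordanCounting} with $\v=\v^{\star}$ and $b=b_1,b_2$ and subtract; this yields \eqref{f} with
$$c=\frac{|m_{\cal{X}_{\v^{\star}}}|}{[M:M_\Gamma]}\bigl(c(\T,\v^{\star},b_1)-c(\T,\v^{\star},b_2)\bigr),$$
the index absorbing the two normalizations $\vol_M$ and $\vol_{M_\Gamma}$ on $\Theta<M_\Gamma$. Positivity of $c$ follows from \eqref{eqn:c}, since $b\mapsto c(\T,\v^{\star},b)$ is strictly increasing and $b_1\ge b_2$ with $b_1\ne b_2$ (the parallelepiped $B$ in the proof of \cref{lem:Hypertube} is $d$-dimensional, so its $\v^{\star}$-chords through $\sQ$ have positive length on a set of positive measure).

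For \eqref{s}: the one additional point is that $\mu(\gamma)$ need not lie in $\cal{C}$. However $\Gamma$ is Zariski dense, so $\limitcone$ is the asymptotic cone of $\mu(\Gamma)$ \cite{Ben97}; since $\limitcone-\{0\}\subset\interior\cal{C}$, only finitely many $\gamma\in\Gamma$ have $\mu(\gamma)\notin\cal{C}$, and hence
$$\#\{\gamma:\bs{\varphi}(\mu(\gamma))\in\textstyle\prod_i[r_iT,r_iT+\e_i]\}=\#\{\gamma:\mu(\gamma)\in\T_{T,b_1}(\v^{\star})-\T_{T,b_2}(\v^{\star})\}+O(1).$$
Applying \cref{thm:BisectorEquidistribution} with $b=b_1,b_2$ and subtracting gives \eqref{s} with $c'c=c(\T,\v^{\star},b_1)-c(\T,\v^{\star},b_2)$, i.e.\ $c'=[M:M_\Gamma]/|m_{\cal{X}_{\v^{\star}}}|>0$, which depends only on $\Gamma,\bs{\varphi},\bs{r}$ (note $\v^{\star}$ is determined by $\bs{\varphi}$ and $\bs{r}$). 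Finally, although $\cal{C}$, $W$, $\sQ$, $b_1$, $b_2$ involve auxiliary choices, the constants $c(\T,\v^{\star},b_j)$ of \eqref{eqn:c} are independent of $\cal{C}$ (the cone does not enter those integrals, as noted after \cref{lem:Asymptotic}), and independence of the complement $W$ is a change-of-variables check; thus $c$ and $c'$ are well defined.

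The genuinely hard work has, in effect, already been done upstream: producing the critical vector $\v^{\star}$ with $\ker\bs{\varphi}<\ker\psi_{\v^{\star}}$ (\cref{lem:CriticalVector}), which is exactly what lets the prism be written as a $\v^{\star}$-truncation aligned with the $(\Gamma,\psi_{\v^{\star}})$-conformal density, and establishing the hypertube asymptotics \cref{thm:JordanCounting,thm:BisectorEquidistribution} via local mixing (\cref{thm:DecayofMatrixCoefficients}) and \cref{lem:Asymptotic}. Within this final deduction the only delicate points are the Cartan tail estimate ($\mu(\gamma)\in\cal{C}$ for all but finitely many $\gamma$) and checking that the constants do not depend on the auxiliary data.
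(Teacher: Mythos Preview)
Your proof is correct and follows essentially the same route as the paper's own argument: reduce via the Basic \cref{lem:Hypertube} to a difference of $\v^{\star}$-truncations, apply \cref{thm:JordanCounting} for \eqref{f} and \cref{thm:BisectorEquidistribution} for \eqref{s}, and handle the finitely many $\mu(\gamma)\notin\cal{C}$ via the asymptotic cone of $\mu(\Gamma)$. Your added justifications---the positivity of $c$ from monotonicity of $b\mapsto c(\T,\v^{\star},b)$ and the independence of the auxiliary choices $\cal{C}$, $W$---are reasonable elaborations that the paper leaves implicit.
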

	\begin{proof}
		Let $\v^{\star} \in \interior\limitcone$ be the unique $(\bs{\varphi},\bs{r})$-critical vector of $\Gamma$ given by Proposition \ref{lem:CriticalVector}. Since $\ker\bp<\ker \psi_{\v^\star}$, we can choose a subspace $W < \ker\psi_\v$ such that $W\oplus\ker\bs{\varphi} = \ker\psi_{\v^{\star}}$. Since $\L\subset \inte\fa^+\cup\{0\}$, we can choose a closed convex cone $\cal{C} \subset \interior\LieA^+$ such that $\limitcone \subset \interior\cal{C}$.
		Set $ \sQ =W\cap \bp^{-1}(\prod_{i=1}^{d}[0,\e_i]\})$.
		Then by \cref{lem:Hypertube}, we have
		$$\T=\bigcup_{T>0} 
		\{\u  \in \cal{C}: \bs{\varphi}(\u) \in \prod_{i=1}^{d}[r_iT,r_iT+\e_i]\} =\T(\sQ ,\R\v^{\star}\oplus\ker\bs{\varphi},\cal{C}) , $$ 
		and  for some $b_1, b_2\in C(\sQ)$,
		$$\cal{C}\cap\bs{\varphi}^{-1}(\prod_{i=1}^{d}[r_iT,r_iT+\e_i]) = \T_{T,b_1}(\v^{\star}) - \T_{T,b_2}(\v^{\star})\;\;\text{for all $T>0$}.$$
		Therefore 
		\begin{multline*}
			\#\{[\gamma] \in [\Gamma] : \, \bv(\lambda(\gamma)) \in \prod_{i=1}^d[r_iT,  r_iT+\e_i],\;  m(\gamma) \in \Theta\}
			\\
			= \#\{[\gamma] \in [\Gamma]: \lambda(\gamma) \in \T_{T,b_1}(\v^{\star}), \, m(\gamma) \in \Theta\} 
			\\
			- \#\{[\gamma] \in [\Gamma]: \lambda(\gamma) \in \T_{T,b_2}(\v^{\star}), \, m(\gamma) \in \Theta\};
		\end{multline*}
		Hence the first asymptotic \eqref{f} follows from \cref{thm:JointEquidistribution} with
    \begin{equation}\label{cintro}
      c= c(\T, \v^{\star}, b_1)  -c(\T, \v^{\star}, b_2).   \end{equation}
		
		Since $\inte \cal{C}$ contains $\L$ which is the asymptotic cone of $\mu(\Ga)$ \cite{Ben97},
		we have $\mu(\Ga)\subset \cal C$ except for finitely many points and hence
		we have
		\begin{multline*}
			\#\{\gamma \in \Gamma : \, \bv(\mu(\gamma)) \in \prod_{i=1}^d[r_iT,  r_iT+\e_i]\}
			\\=\#\{\gamma \in \Gamma: \mu(\gamma) \in \T_{T,b_1}(\v^{\star})\} - \#\{\gamma \in \Gamma: \mu(\gamma) \in \T_{T,b_2}(\v^{\star})\}+O(1).
		\end{multline*}
		Therefore the second asymptotic \eqref{s} follows from \cref{thm:BisectorEquidistribution} with
         \begin{equation}\label{cpintro}
      c'= \frac{1}{|m_{\mathcal X_{\v}}|} .   \end{equation}
      \end{proof}

	\begin{remark}\label{rem:Zariski}
		\begin{enumerate}
			\item 
			Properness of $\bs{\varphi}|_{\limitcone}$, or equivalently the property that $\ker\bs{\varphi} \cap \limitcone = \{0\}$, is a necessary condition for the intersection $\mu(\Gamma) \cap S_T$ to be finite for each $T>0$, where $S_T= \{\w  \in \LieA^+ : \, r_iT \le \varphi_i(\w ) \le r_iT+\e_i, 1 \le i \le d\}$. Indeed, since $S_T$ is invariant under translation by $\ker\bs{\varphi}$ and of bounded distance from $\ker\bs{\varphi}$, we have $\#\mu(\Gamma) \cap S_T$ is finite if and only if $\ker\bs{\varphi}$ has trivial intersection with the asymptotic cone of $\mu(\Gamma)$, i.e., $\bs{\varphi}|_{\limitcone}$ is proper. 
			\item Even when $G$ is the product of rank one groups, \cref{thm:Counting} is more general than our previous results in \cite{CO23} since $\bs{\varphi}$ need not be injective.

		\end{enumerate}
	\end{remark}

	\subsection*{Proof of \cref{thm:Correlations}}
	Note that $\Gamma:=\bs{\rho}(\Sigma)$ is a Zariski dense Anosov subgroup of $\prod_{i=1}^d G_i$, $(\varphi_1,\dots,\varphi_d)|_{\limitcone_{\bs{\rho}(\Sigma)}}$ is a proper map, and $M_{\bs{\rho}}=M_\Ga$. It is easy to see that $\limitcone_{\bs{\varphi}}$ in \cref{thm:Correlations} coincides with the $\bs{\varphi}$-projection of $\limitcone_{\bs{\rho}(\Sigma)}$. 
	Hence the asymptotics in \cref{thm:Correlations} is a particular instance of \cref{thm:Counting} with $\Gamma = \bs{\rho}(\Sigma)$ and we have $\delta_{\bs{\rho},\bs{\varphi}}(\bs{r}) = \psi_{\bs{\rho}(\Sigma)}(\v^\star)$ where $\v^\star$ is the $(\bs{\varphi},\bs{r})$-critical vector of $\bs{\rho}(\Sigma)$ and $\psi_{\bs{\rho}(\Sigma)}$ is the growth indicator of
	${\bs{\rho}(\Sigma)}$.
	It remains to prove the upper bound for $\delta_{\bs{\rho},\bs{\varphi}}(\bs{r}) = \psi_{\bs{\rho}(\Sigma)}(\v^\star)$. Since $\varphi_i$ is positive on $\limitcone_{\rho_i(\Sigma)} - \{0\}$, when we view $\varphi_i$ as an element of $\LieA^*$ in the natural way, $\varphi_i$ is positive on $\limitcone_{\bs{\rho}(\Sigma)}$. By \cref{thm:delta}, we have $\delta_{\bs{\rho}(\Sigma),\varphi_i} = \delta_{\rho_i(\Sigma),\varphi_i}$ and $\delta_{\rho_i(\Sigma),\varphi_i}\varphi_i$ is tangent to $\psi_{\bs{\rho}(\Sigma)}$. 
	By \cite[Theorem 1.4]{KMO21}, we have
	$$\psi_{\bs{\rho}(\Sigma)}(\v^\star) \le \min_{1 \le i \le d}\delta_{\rho_i(\Sigma),\varphi_i}\varphi_i(\v^\star) = \min_{1 \le i \le d}\delta_{\rho_i(\Sigma),\varphi_i}r_i$$
	and if all $\delta_{\rho_i(\Sigma),\varphi_i}\varphi_i(\v^\star)$ are equal, then the inequality is strict.
	\begin{remark}
		Clearly ${\bs{\rho}}(\Sigma)$ being Zariski dense in $\prod_{i=1}^d G_i$ implies that $\rho_i(\Sigma)$ is Zariski dense in $G_i$ for all $i$ and that for all $i \ne j$, $\rho_i \circ \rho_j^{-1}: \rho_j(\Sigma) \to \rho_i(\Sigma)$ does not extend to a Lie group isomorphism $G_j \to G_i$. In fact, the converse holds provided that $G_i$ is simple for all $i$ (cf. \cite[Lemma 4.1]{KO22}). 
	\end{remark}
	
	\subsection*{Proof of \cref{thm:Hilbert}}
	The Hilbert metric $d^{\op{H}}_\Omega$ on a properly convex domain  $\Omega \subset \R\P^2$ is defined as 
	\be\label{hilbert} d^{\op{H}}_\Omega(x,y) = \frac{1}{2}\log\left(\frac{\|w-y\| \cdot\|x-z\|}{\|w-x\|\cdot\|y-z\|}\right)\quad\text{for $x,y\in \Omega$}\ee 
	where $w,z \in \partial\Omega$ are such that $w,x,y,z$ are colinear in that order and $\|\cdot\|$ is a Euclidean norm on an affine chart $\mathbb{A}$ containing $\Omega$. The geodesics are precisely the projective lines in $\Omega$ and the isometries are precisely the elements of $\PGL_3\R$ preserving $\Omega$. 
	Identify the positive Weyl chamber of $\PGL_3\R$ with $\{(x_1,x_2,x_3) \in \R^3: x_1+x_2+x_3 =0, \,  x_1 \ge x_2 \ge x_3\}$.
	A basic fact from convex projective geometry is that the Hilbert length is given by 
	$$\ell^{\op{H}}_{\rho_i}(\sigma) = \tfrac{1}{2}(\lambda_1(\rho_i(\sigma))-\lambda_3(\rho_i(\sigma)))$$
	where $\lambda_j(g)$ denotes the logarithm of the $j$'th largest modulus of an eigenvalue of $g \in \PGL_3\R$. Another fact that the only automorphisms of $\PGL_3\R$ are the inner automorphisms and their composition with the contragradient involution. By \cite[Theorem 1.4]{DGK17}, $\rho$ is also an Anosov representation. Then by \cref{rem:Zariski}, $\bs{\rho}(\Sigma)$ is Zariski dense in $(\PGL_3\R)^d$. 
	\cref{thm:Hilbert} now follows by applying \cref{thm:Correlations} with $\varphi_i(x_1,x_2,x_3) = \tfrac{1}{2}(x_1-x_3)$ for all $i$. 
	
	\subsection*{Correlations for convex cocompact hyperbolic manifolds}
	Consider a $k$-tuple $\bs{\rho} = (\rho_1,\dots,\rho_k )$
	of Zariski dense convex cocompact representations of a finitely generated group $\Sigma$ into $\SO^\circ(n,1)$ for some $n\ge 2$ and $k\ge 2$.  We further assume that for all $i \ne j$, $\rho_i$ and $\rho_j$ are not conjugates. Let $\ell_i(\sigma)$ denote the length of the closed geodesic in the hyperbolic manifold $\rho_i(\Sigma) \ba \H^n$ corresponding to the conjugacy class $[\rho_i(\sigma)]$. \cref{thm:Counting} has the following application:
	
	\begin{theorem}
		\label{thm:Hyperbolic}
		Let $\bs{\phi}:\R^k \to \R$
		be a linear map such that
		$\{\bs{\phi}(\ell_1(\sigma), \dots, \ell_k(\sigma))\in \R :\sigma \in \Sigma\}$
		contains both negative and positive values. Then there exists $\delta > 0$ such that for any $\e > 0$, there exists a constant $c>0$ such that as $T \to \infty$,
		$$\{[\sigma] \in [\Sigma]: 0 \le \bs{\phi}(\ell_1(\sigma),\dots,\ell_k(\sigma)) \le \e, \, T \le \ell_k(\sigma) \le T + \e\} \sim c \cdot \frac{e^{\delta T}}{T^{3/2}}.$$
	\end{theorem}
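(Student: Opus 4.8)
The plan is to obtain \cref{thm:Hyperbolic} as a special case of \cref{thm:Counting}. First I would set $G = \prod_{i=1}^k \SO^\circ(n,1)$ and $\Gamma = \bs{\rho}(\Sigma) < G$. As in the proof of \cref{thm:Hilbert}, each (rank one) convex cocompact $\rho_i$ is Anosov, and the pairwise non-conjugacy of the $\rho_i$ together with the Zariski density of each one yields that $\Gamma$ is a Zariski dense Anosov subgroup of $G$. Since each factor $\SO^\circ(n,1)$ has rank one, identifying each $\fa_i$ with $\R$ via the translation length gives $\fa \cong \R^k$, $\rankG = k \ge 2$, and $\lambda(\bs{\rho}(\sigma)) = (\ell_1(\sigma),\dots,\ell_k(\sigma))$ for every infinite order $\sigma \in \Sigma$.

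I would then apply \cref{thm:Counting} with $d = 2$ and the linear map $\bs{\varphi} = (\varphi_1,\varphi_2) : \fa \cong \R^k \to \R^2$ given by $\varphi_1 = \bs{\phi}$ and $\varphi_2(x_1,\dots,x_k) = x_k$. The hypothesis that $\bs{\phi}$ takes both signs on length vectors forces $\bs{\phi}$ not to be a multiple of $\varphi_2$, so $\bs{\varphi}$ is surjective; and since the Anosov property gives $\limitcone_\Gamma \subset \interior\LieA^+ \cup \{0\}$, every nonzero vector of $\limitcone_\Gamma$ has strictly positive last coordinate, whence $\ker\bs{\varphi} \cap \limitcone_\Gamma = \{0\}$ and $\bs{\varphi}|_{\limitcone_\Gamma}$ is proper.

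The one genuinely substantive step — I expect the rest to be bookkeeping — is to verify that $\bs{r} := (0,1)$ lies in $\interior\limitcone_{\bs{\varphi}}$, which by \cref{lem:ProjectedCone} equals $\bs{\varphi}(\interior\limitcone_\Gamma)$. Here I would use the sign hypothesis to choose $\sigma_\pm \in \Sigma$ with $\bs{\phi}(\ell_1(\sigma_\pm),\dots,\ell_k(\sigma_\pm))$ of opposite signs; the corresponding Jordan projections lie in $\limitcone_\Gamma = \overline{\interior\limitcone_\Gamma}$, so by continuity of $\bs{\phi}$ there are $y_\pm \in \interior\limitcone_\Gamma$ with $\bs{\phi}(y_+) > 0 > \bs{\phi}(y_-)$. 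The segment $[y_-,y_+]$ stays in the convex set $\interior\limitcone_\Gamma$ and meets $\ker\bs{\phi}$ at a point $y_0$; since $y_0 \in \interior\LieA^+$ its last coordinate $(y_0)_k$ is positive, so $x := y_0/(y_0)_k \in \interior\limitcone_\Gamma$ satisfies $\bs{\varphi}(x) = (0,1)$.

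Finally I would invoke \cref{thm:Counting} with this $\bs{\varphi}$, $\bs{r} = (0,1)$, $\e_1 = \e_2 = \e$, and $\Theta = M_\Gamma$, so that the holonomy constraint $m(\gamma) \in M_\Gamma$ is automatic and $\vol_{M_\Gamma}(\Theta) = 1$. Using that $\bs{\rho}$ is faithful to identify $[\Sigma]$ with $[\Gamma]$ and that $\bs{\varphi}(\lambda(\bs{\rho}(\sigma))) = (\bs{\phi}(\ell_1(\sigma),\dots,\ell_k(\sigma)),\ell_k(\sigma))$, the left-hand count in \cref{thm:Counting} becomes exactly the quantity in \cref{thm:Hyperbolic}, and its conclusion gives the asymptotic $c\cdot e^{\delta T}/T^{3/2}$ with $\delta = \psi_\Gamma(\v^{\star})$, where $\v^{\star} \in \interior\limitcone_\Gamma$ is the $(\bs{\varphi},(0,1))$-critical vector of $\Gamma$. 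Since $\v^{\star}$ depends only on $\bs{\varphi}$ and $(0,1)$ and not on $\e$, and $\psi_\Gamma$ is positive on $\interior\limitcone_\Gamma$, the exponent $\delta > 0$ is independent of $\e$ as required, while $c = c(\Gamma,\bs{\varphi},\e)$.
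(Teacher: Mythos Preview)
Your proposal is correct and follows essentially the same approach as the paper: set $\bs{\varphi}(x)=(\bs{\phi}(x),x_k)$, verify properness via $\ker\bs{\varphi}\subset\{x_k=0\}$, check $(0,1)\in\interior\limitcone_{\bs{\varphi}}$ from the sign hypothesis, and apply \cref{thm:Counting}. The only cosmetic difference is that the paper argues $(0,1)\in\interior\limitcone_{\bs{\varphi}}$ directly in the two-dimensional image cone (it contains points $(y_1,1)$ and $(y_2,1)$ with $y_1<0<y_2$, hence $(0,1)$ is interior by convexity), whereas you exhibit an explicit preimage in $\interior\limitcone_\Gamma$ via \cref{lem:ProjectedCone}; your added remarks on surjectivity of $\bs{\varphi}$ and independence of $\delta$ from $\e$ are welcome clarifications.
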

	
	\begin{proof}
		Let $\bs{\varphi}: \R^k \to \R^2$ be given by $\bs{\varphi}(x_1,\dots,x_k) = (\bs{\phi}(x_1,\dots,x_k),x_k)$. 
		By \cref{rem:Zariski}, $\bs{\rho}(\Sigma)$ is a Zariski dense Anosov subgroup of $\SO^\circ(n,1)^k$. Then we have $\limitcone_{\bs{\rho}(\Sigma)} \subset 
		\inte \fa^+ \cup \{0\}=(0,\infty]^k \cup \{0\}$. Since 
		$\ker \bp\subset \{x_k=0\}$, we have 
		$\ker\bs{\varphi} \cap \limitcone_{\bs{\rho}(\Sigma)} = \{0\}$ and hence $\bs{\varphi}|_{\limitcone_{\bs{\rho}(\Sigma)}}$ is proper . 
		By \cref{lem:ProjectedCone}, $\bs{\varphi}(\limitcone_{\bs{\rho}(\Sigma)})$ is a convex cone. By the hypothesis on $\bs{\phi}$, the cone $\bs{\varphi}(\limitcone_{\bs{\rho}(\Sigma)})$ contains vectors $(y_1,1)$ and $(y_2,1)$ with $y_1< 0 < y_2$. Hence, $\bs{r} = (0,1) \in \interior \bs{\varphi}(\limitcone_{\bs{\rho}(\Sigma)})$.
		\cref{thm:Hyperbolic} follows from applying \cref{thm:Counting} to $\Gamma =\bs{\rho}(\Sigma)$, $\bs{\varphi}$ and $\bs{r}$. 
	\end{proof}

	\subsection*{Hitchin representations}
	Even in the case of a single Hitchin representation, \cref{thm:Counting} gives a new counting result.
	
	\begin{theorem}
		\label{thm:Hitchin1}
		Let $\rho:\Sigma \to \PSL_n\R$ be a Zariski dense Hitchin representation and $\bs{\phi}:\R^n \to \R$ be a linear form which is positive on $\interior\LieA^+$. Suppose that for some $1\le j\le n$, $\lambda_j(\rho(\Sigma))$ contains both positive and negative values; here $\lambda_j$ is as in \eqref{1}. Then there exists $\delta > 0$ such that for any $\e > 0$, we have as $T \to \infty$,
		$$\{[\sigma] \in [\Sigma]: 0 \le \lambda_j(\rho(\sigma)) \le \e, \, T \le  \bs{\phi}(\lambda(\rho(\sigma))) \le T + \e\} \sim c \cdot \frac{e^{\delta T}}{T^{3/2}};$$
		$$\{\sigma \in \Sigma: 0 \le \mu_j(\rho(\sigma)) \le \e, \, T \le  \bs{\phi}(\mu(\rho(\sigma))) \le T + \e\} \sim c \cdot c' \cdot \frac{e^{\delta T}}{T^{3/2}}$$
		for some constants $c, c' > 0$; here $\mu_j$ is as in \eqref{2}.
	\end{theorem}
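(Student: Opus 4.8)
The plan is to deduce \cref{thm:Hitchin1} from \cref{thm:Counting} applied to the single Zariski dense Anosov subgroup $\Gamma=\rho(\Sigma)<\PSL_n\R$. Indeed, $\rho$ is Anosov by Labourie \cite{Lab06}, so $\Gamma$ is a Zariski dense Anosov subgroup of $\PSL_n\R$, and by \cite[Proposition 4.6]{PS17} its limit cone $\limitcone:=\limitcone_\Gamma$ is contained in $\interior\LieA^+\cup\{0\}$. As in the proof of \cref{thm:Hilbert}, identify the closed positive Weyl chamber of $\PSL_n\R$ with $\{(x_1,\dots,x_n)\in\R^n:\sum_i x_i=0,\ x_1\ge\cdots\ge x_n\}$, so that $\lambda_j(g)$ and $\mu_j(g)$ are the $j$-th coordinates of $\lambda(g)$ and $\mu(g)$ in $\LieA^+$. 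Define $\bs{\varphi}=(\varphi_1,\varphi_2):\LieA\to\R^2$ by $\varphi_1(x)=x_j$ and $\varphi_2=\bs{\phi}|_{\LieA}$, so that $\varphi_1(\lambda(g))=\lambda_j(g)$, $\varphi_1(\mu(g))=\mu_j(g)$, $\varphi_2(\lambda(g))=\bs{\phi}(\lambda(g))$, and $\varphi_2(\mu(g))=\bs{\phi}(\mu(g))$.

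I would then check the hypotheses required by \cref{thm:Counting}. \emph{Surjectivity of $\bs{\varphi}$:} $\varphi_2=\bs{\phi}$ is positive on $\interior\LieA^+$, which contains $\limitcone-\{0\}$, whereas $\varphi_1=\lambda_j$ takes both positive and negative values on $\limitcone-\{0\}$ by hypothesis; hence $\varphi_1,\varphi_2$ are linearly independent functionals on $\LieA$, so $\bs{\varphi}$ is surjective (in particular $n\ge3$). \emph{Properness of $\bs{\varphi}|_{\limitcone}$:} since $\ker\bs{\varphi}\subset\ker\varphi_2=\ker\bs{\phi}$ and $\bs{\phi}>0$ on $\limitcone-\{0\}$ (using $\limitcone\subset\interior\LieA^+\cup\{0\}$), we get $\ker\bs{\varphi}\cap\limitcone=\{0\}$. \emph{The vector $\bs{r}:=(0,1)$ lies in $\interior\limitcone_{\bs{\varphi}}$:} by \cref{lem:ProjectedCone}, $\limitcone_{\bs{\varphi}}=\bs{\varphi}(\limitcone)$ is a closed convex cone; pick $\sigma_+,\sigma_-\in\Sigma$ with $\lambda_j(\rho(\sigma_+))>0>\lambda_j(\rho(\sigma_-))$ and write $\bs{\varphi}(\lambda(\rho(\sigma_\pm)))=(a_\pm,b_\pm)$. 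Then $a_+>0>a_-$, and $b_\pm=\bs{\phi}(\lambda(\rho(\sigma_\pm)))>0$ since $\lambda(\rho(\sigma_\pm))\in\limitcone-\{0\}$. Hence $\limitcone_{\bs{\varphi}}$ contains the two linearly independent vectors $(a_+/b_+,1)$ and $(a_-/b_-,1)$, so it contains the two-dimensional convex cone they span; writing $(0,1)$ as a convex combination of these two vectors, both coefficients are strictly positive because $a_-/b_-<0<a_+/b_+$, so $(0,1)$ is an interior point of that cone and a fortiori of $\limitcone_{\bs{\varphi}}$.

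With the hypotheses in hand, \cref{thm:Counting} applies to $\Gamma=\rho(\Sigma)$, the map $\bs{\varphi}$ above, the vector $\bs{r}=(0,1)$, the parameters $\e_1=\e_2=\e$, and the (trivially admissible) subset $\Theta=M_\Gamma$, for which $\vol_{M_\Gamma}(\Theta)=1$ and the holonomy condition is vacuous. Since $\varphi_1(\lambda(\rho(\sigma)))=\lambda_j(\rho(\sigma))$, $\varphi_2(\lambda(\rho(\sigma)))=\bs{\phi}(\lambda(\rho(\sigma)))$, and likewise for $\mu$, the Jordan asymptotic \eqref{f} and the Cartan asymptotic \eqref{s} of \cref{thm:Counting} translate directly into the two stated asymptotics of \cref{thm:Hitchin1}, with $\delta=\psi_\Gamma(\v^\star)>0$, where $\v^\star\in\interior\limitcone$ is the $(\bs{\varphi},\bs{r})$-critical vector of $\Gamma$ provided by \cref{lem:CriticalVector}, and with positive constants $c=c(\Gamma,\bs{\varphi},(0,1),\e,\e)$ and $c'=|m_{\mathcal{X}_{\v^\star}}|^{-1}$. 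As $\rho$ is discrete and faithful, $\Sigma\cong\Gamma$, so counting $[\sigma]\in[\Sigma]$ (resp.\ $\sigma\in\Sigma$) agrees with counting $[\gamma]\in[\Gamma]$ (resp.\ $\gamma\in\Gamma$).

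The only genuinely delicate point is the last of the three verifications: that $\bs{r}=(0,1)$ lies in the \emph{interior} of the projected cone $\limitcone_{\bs{\varphi}}$ rather than on its boundary. This is precisely where the hypothesis that $\lambda_j(\rho(\Sigma))$ assumes both signs is used --- it guarantees that $\limitcone_{\bs{\varphi}}$ is two-dimensional and that $(0,1)$ sits strictly between two of its generating rays. Granting this, the remainder is a routine unwinding of definitions, together with the standard structure theory of Hitchin representations (Labourie; Potrie--Sambarino \cite{PS17}) recalled above.
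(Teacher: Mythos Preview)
Your proposal is correct and follows essentially the same approach as the paper's proof: both reduce to \cref{thm:Counting} for $\Gamma=\rho(\Sigma)$ with the two linear forms $x\mapsto x_j$ and $\bs{\phi}$, differing only cosmetically in the order of the components (you take $\bs{\varphi}=(x_j,\bs{\phi})$ and $\bs{r}=(0,1)$, the paper takes $(\bs{\phi},x_j)$ and $\bs{r}=(1,0)$). Your write-up is in fact more detailed than the paper's, spelling out the surjectivity check and the argument that $\bs{r}$ lies in $\interior\limitcone_{\bs{\varphi}}$, which the paper leaves implicit.
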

	
	\begin{proof}
		Let $\bs{\varphi}: \R^n \to \R^2$ be given by $\bs{\varphi}(x_1,\dots,x_d) = (\bs{\phi}(x_1,\dots,x_d),x_j)$. Note that $\bs{\varphi}|_{\limitcone_{\rho(\Sigma)}}$ is proper since $\bs{\phi}$ is positive on $\interior\LieA^+$.  Then \cref{thm:Hitchin1} follows from applying \cref{thm:Counting} to $\Gamma =\rho(\Sigma)$, $\bs{\varphi}$ and the direction $\bs{r} = (1,0)$ which is an interior vector of the $\bs{\varphi}$-projection of $\limitcone_{\rho(\Sigma)}$ by the hypothesis on $\lambda_j(\rho(\Sigma))$. 
	\end{proof}

	\noindent \textbf{Proof of \cref{thm:Hitchin}}. Let $\bs{\rho}$ and $\bs{\beta}$ be as in \cref{thm:Hitchin}. A fact is that the only automorphisms of $\PSL_n\R$ are the inner automorphisms and their composition with the contragradient involution. Then by \cref{rem:Zariski}, $\bs{\rho}(\Sigma)$ is Zariski dense in $(\PSL_n\R)^d$.
	\cref{thm:Hitchin} follows from applying \cref{thm:Correlations} with $\bs{\varphi} = \bs{\beta}$ and the bounds use the fact that $\delta_{\rho_i,\beta_i} = 1$ for all $i$ \cite[Theorem B]{PS17}.

\end{document}